\documentclass[14pt]{article}
\usepackage[a4paper, margin = 3cm]{geometry}
\usepackage{amsmath, amssymb, amsthm, bbm, tikz-cd, url, hyperref}
\usepackage{thmtools}
\usepackage[affil-it]{authblk}

\usepackage{amsmath, amsthm, stmaryrd, amsfonts, cleveref, tikz-cd, enumitem}

\newcommand{\ind}{\leavevmode{\parindent=15pt\indent}}

\theoremstyle{plain} %Theorem, Lemma, Corollary, Proposition, Conjecture, Criterion, Assertion
\newtheorem{theorem}{Theorem}[section]
\newtheorem*{theorem*}{Theorem}
\newtheorem{corollary}[theorem]{Corollary}
\newtheorem*{corollary*}{Corollary}
\newtheorem{proposition}[theorem]{Proposition}
\newtheorem*{proposition*}{Proposition}
\newtheorem{lemma}[theorem]{Lemma}
\newtheorem*{lemma*}{Lemma}

\newtheorem*{fact*}{Fact}

\newtheorem*{conjecture*}{Conjecture}

\newtheorem*{criterion*}{Criterion}

\newtheorem*{assertion*}{Assertion}

\newtheorem*{lem_def*}{Lemma-Definition}

\newtheorem*{prop_def*}{Proposition_Definition}

\newtheorem*{thm_def*}{Theorem-Definition}

\theoremstyle{definition} %Definition, Condition, Problem, Example, Exercise, Algorithm, Question, Axiom, Property, Assumption, Hypothesis
\newtheorem{example}[theorem]{Example}
\newtheorem*{example*}{Example}
\newtheorem{examples}[theorem]{Examples}
\newtheorem*{examples*}{Examples}
\newtheorem{definition}[theorem]{Definition}
\newtheorem*{definition*}{Definition}

\newtheorem*{condition*}{Condition}

\newtheorem*{problem*}{Problem}

\newtheorem*{exercise*}{Exercise}

\newtheorem*{algorithm*}{Algorithm}

\newtheorem*{subroutine*}{Subroutine}

\newtheorem*{question*}{Question}

\newtheorem*{axiom*}{Axiom}

\newtheorem*{property*}{Property}

\newtheorem*{assumption*}{Assumption}

\newtheorem*{hypothesis*}{Hypothesis}

\theoremstyle{remark} %Remark, Note, Notation, Claim, Summary, Acknowledgment, Case, Conclusion
\newtheorem{remark}[theorem]{Remark}
\newtheorem{remarks}[theorem]{Remarks}
\newtheorem*{remark*}{Remark}

\newtheorem*{note*}{Note}

\newtheorem*{scholium*}{Scholium}

\newtheorem*{notation*}{Notation}

\newtheorem*{claim*}{Claim}

\newtheorem*{summary*}{Summary}

\newtheorem*{acknowledgment*}{acknowledgment}

\newtheorem*{acknowledgement*}{acknowledgement}

\newtheorem*{case*}{Case}

\newtheorem*{conclusion*}{Conclusion}
\makeatletter
\def\thmheadbrackets#1#2#3{%
  \thmname{#1}\thmnumber{\@ifnotempty{#1}{ }\@upn{#2}}%
  \thmnote{ {\the\thm@notefont[#3]}}}
\makeatother

\newtheoremstyle{brackets}% Name
  {}% space above
  {}% space below
  {\itshape}% body font
  {}% indent
  {\bfseries}% head font
  {.}% punctuation after head
  { }% space after head (has to be space or dimension!)
  {\thmheadbrackets{#1}{#2}{#3}}% head spec

\theoremstyle{brackets}

\newtheorem*{theorembrackets*}{Theorem}

\makeatletter
\def\thmheadnoparens#1#2#3{%
  \thmname{#1}\thmnumber{\@ifnotempty{#1}{ }\@upn{#2}}%
  \thmnote{ {\the\thm@notefont#3}}}
\makeatother

\newtheoremstyle{noparens}% Name
  {}% space above
  {}% space below
  {\itshape}% body font
  {}% indent
  {\bfseries}% head font
  {.}% punctuation after head
  { }% space after head (has to be space or dimension!)
  {\thmheadnoparens{#1}{#2}{#3}}% head spec

\theoremstyle{noparens}

\newtheorem*{theoremnoparens*}{Theorem}

\usepackage{xparse}
\DeclareDocumentCommand{\newmathcommand}{mO{0}m}{%
  \expandafter\let\csname old\string#1\endcsname=#1
  \expandafter\newcommand\csname new\string#1\endcsname[#2]{#3}
  \DeclareRobustCommand#1{%
    \ifmmode
      \expandafter\let\expandafter\next\csname new\string#1\endcsname
    \else
      \expandafter\let\expandafter\next\csname old\string#1\endcsname
    \fi
    \next
  }%
}

\newcommand{\A}{\mathbb{A}}

\newcommand{\C}{\mathbb{C}}

\newmathcommand{\H}{{\mathbb{H}}}

\newcommand{\K}{\mathbb{K}}
\newmathcommand{\L}{\mathbb{L}}

\newmathcommand{\O}{\mathbb{O}}
\newmathcommand{\P}{\mathbb{P}}

\newcommand{\R}{\mathbb{R}}
\newmathcommand{\S}{\mathbb{S}}

\newcommand{\Z}{\mathbb{Z}}
\newmathcommand{\deg}{\operatorname{deg}}

\DeclareMathOperator{\End}{End}

\newmathcommand{\Big}{{\mathrm{Big}}}

\DeclareMathOperator{\FJ}{FJ}
\DeclareMathOperator{\FSJ}{FSJ}
\DeclareMathOperator{\FJP}{FJP}

\DeclareMathOperator{\JB}{JB}

\DeclareMathOperator{\rk}{rk}

\newmathcommand{\dv}{\operatorname{div}}
\newmathcommand{\inw}{\operatorname{int}}

\newmathcommand{\epi}{\operatorname{epi}}
\newmathcommand{\diam}{\operatorname{diam}}
\newmathcommand{\diag}{\operatorname{diag}}
\newmathcommand{\regn}{\operatorname{regn}}
\newmathcommand{\relinw}{\operatorname{relint}}

\newmathcommand{\char}{\operatorname{char}}

\DeclareMathOperator{\Id}{Id}
\DeclareMathOperator{\id}{id}

\newmathcommand{\unr}{\mathrm{unr}}

\newmathcommand{\Un}{\operatorname{Un}}

\newmathcommand{\sl}{\mathfrak{s}\mathfrak{l}}

\newmathcommand{\un}{\mathfrak{u}}
\newmathcommand{\su}{\mathfrak{s}\mathfrak{u}}

\title{On the Shirshov--Cohn theorem for JB-algebras}
\author{Mark Roelands%
\thanks{Email: \texttt{m.roelands@math.leidenuniv.nl}}}
\affil{Mathematical Institute, Leiden University, 2300 RA Leiden,
The Netherlands}

\author{Samuel Tiersma%
\thanks{Email: \texttt{s.j.tiersma@math.leidenuniv.nl}}}
\affil{Mathematical Institute, Leiden University, 2300 RA Leiden,
The Netherlands}

\begin{document}

\maketitle

\begin{abstract}
\noindent It is shown that a JB-algebra which can be generated by the union of two of its associative Jordan subalgebras is a JC-algebra, hence special. A similar refinement of Macdonald's principle for JB-algebras is obtained. Moreover, we prove that the free unital JB-algebra generated by $n$ projections is a JC-algebra if and only if $n\in \{1,2,3\}$. Finally, we give an explicit description of the free unital JB-algebra generated by two projections paralleling the Raeburn-Sinclair theorem for C*-algebras.
\end{abstract}

{\small {\bf Keywords:} JB-algebras, Shirshov--Cohn theorem, Mcdonald's principle, special Jordan algebras, exceptional JB-algebras, identities in Jordan algebras, Albert algebra, free JB-algebra generated by projections}

{\small {\bf Subject Classification: 	17C05, 17C40, 17C10}

\section{Introduction}
%The standard formulation of quantum mechanics models physical observables as the self-adjoint operators on a complex Hilbert space $H$. Although the real vector space $B(H)_{sa}$ is not closed composition, P. Jordan \cite{Jordan33} observed that it is closed under the commutative product $S \circ T := \frac{1}{2}(ST+TS)$, which satisfies the \emph{Jordan identity} $S \circ (T \circ S^2) = (S \circ T) \circ S^2$. In an attempt to discover alternative models for quantum mechanics, P. Jordan, J. von Neumann and E. Wigner axiomatized the properties of the Jorda 
%Later, E. Alfsen, F. Shultz and E. Størmer introduced in \cite{ASS78} a class of Jordan algebras with a compatible Banach space structure called \emph{JB-algebras}. The fundamental example of a JB-algebra is the real vector space $B(H)_{sa}$ of bounded self-adjoint operators on a complex Hilbert space $H$ endowed with the Jordan product $x \circ y = \frac{1}{2}(xy+yx)$. A JB-algebra is called a \emph{JC-algebra} if it embeds in $B(H)_{sa}$. It is an intriguing question how far 
Jordan algebras were introduced in the 1930s as an alternative algebraic framework for the mathematical formulation of quantum
mechanics \cite{JNW}. Since, Jordan algebras have been extensively studied for their rich connections with other fields of mathematics, such as Lie theory \cite{Jac71}, \cite{FaFe77}, \cite{SpVe}, the study of symmetric spaces   \cite{Sa80}, \cite{Up85}, \cite{Kaup02}  and operator theory \cite{Topping65}, \cite{EffSto79}, \cite{AHOS}, \cite{Up87}.\\
\ind For each associative real algebra $A$ and linear subspace $J \subset A$ which is closed under taking squares, $J$ is a Jordan algebra under the product $x \circ y := \frac{1}{2}(xy+yx)$. A Jordan algebra is called \emph{special} if it arises in this way from an associative algebra, otherwise it is called \emph{exceptional}. A.A. Albert proved that the \emph{Albert algebra} $\A := M_3(\O)_{sa}$ of self-adjoint $3\times 3$-matrices with entries in the nonassociative division algebra $\O$ of octonions is exceptional \cite{Albexc}. Later, Glennie gave an elegant proof of this fact by exhibiting two Jordan identities satisfied by all special Jordan algebras but not by the Albert algebra \cite{Glennie}, initiating a quest for other such identities \cite{Thedy87}, \cite{Zhev}, \cite{Sverchkov}. The distinction between special and exceptional Jordan algebras forms a central theme in the structure theory of Jordan algebras \cite{ZelP1}, \cite{JacStrJA}, \cite{ZelPII}. \\ %Later, Glennie gave an elegant proof of this fact by exhibiting two Jordan identities satisfied by all special Jordan algebras but not by the Albert algebra.
%\ind After the discovery of the exceptional Albert algebra, it has become a central theme in the theory of Jordan algebras to describe their structure in terms of associative structures. In particular, people have seeked condition which guarantee that a Jordan algebra is either special or exceptional. Jordan identities have been intensively studied because they shed light on this question.\\\cite{ZelP1}
\ind There are two classical results that address this theme: the Shirshov--Cohn theorem and Macdonald's principle. The Shirshov--Cohn theorem asserts that every Jordan algebra which can be generated by two elements (and $1$) is special. According to Macdonald's principle, if a Jordan identity involving three variables is linear in one of these and is satisfied by all special Jordan algebras, then this identity is satisfied by all Jordan algebras. Macdonald's principle has been recognized as an indispensable tool \cite[p. 458]{McC04} in the development of the theory of Jordan algebras, allowing the efficient verification of many Jordan identities by reducing to the case of a special Jordan algebra and then calculating in the ambient associative structure.\\
\ind In this paper we revisit these results in the setting of JB- and JBW-algebras. The prototypical example of a JB--algebra is the self-adjoint part $A_{sa}$ of a C*-algebra $A$ endowed with the Jordan product $x \circ y := \frac{1}{2}(xy+yx)$. Note that while the C*-algebra product of two self-adjoint elements is generally not self-adjoint, the Jordan product does yield an intrinsic algebraic structure on the ordered vector space $A_{sa}$. For this reason, JB-algebras provide a versatile algebraic framework to study order theory in a C*-algebra.\\
%In fact, the unital JB-algebras can order-theoretically be characterized as the order unit spaces for which the interior of the positive cones admits an order-anti-isomorphism that is homogeneous of degree $-1$ \cite{OCJB}.  \\
\ind A JB-algebra is called a \emph{JC-algebra} if it embeds in $B(H)_{sa}$ for some complex Hilbert space $H$. The Shirshov--Cohn theorem for JB-algebras says that each JB-algebra which can be generated by two elements (and $1$) is a JC-algebra. We generalize this theorem by replacing the two elements in its statement with two associative Jordan subalgebras.

\begin{theorem}\label{11}
    A JB-algebra is a JC-algebra if it can be generated by the union of two of its associative Jordan subalgebras. 
\end{theorem}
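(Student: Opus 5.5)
We use the standard structure theory of JB-algebras from Hanche-Olsen--Størmer. A JB-algebra $A$ is a JC-algebra if and only if it has no nonzero Jordan homomorphism onto an exceptional factor. More precisely, $A$ is special/JC exactly when every factor representation of $A$ avoids the exceptional part. Concretely, we show that every factor representation of $A$ (or of $A^{**}$) into a JBW-factor is into a special one. Equivalently, for every Jordan homomorphism $\pi\colon A\to \A$ with generating image, we reach a contradiction. So we may assume that $A$, generated by associative subalgebras $B$ and $C$, admits a surjective homomorphism onto $\A$, or more generally one whose image generates $\A$. (We also need $A^{**}$ to have no exceptional summand. This reduces to such a $\pi$, since the exceptional summand of $A^{**}$ is of the form $C(X,\A)$, and evaluation at points gives homomorphisms into $\A$ whose images jointly separate.)

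The images $\pi(B)$ and $\pi(C)$ are associative subalgebras of $\A$, and together they generate $\A$ as a JB-algebra. The key step is therefore the following finite-dimensional claim: $\A$ cannot be generated by two associative subalgebras. Each associative subalgebra of $\A$ is contained in a maximal associative subalgebra. Up to the automorphism group $F_4$, such a subalgebra is spanned by a complete frame of three orthogonal minimal idempotents, so it lies in the span of $e_1,e_2,e_3$ for a Jordan frame. Hence, after conjugation, $\pi(B)\subseteq \mathrm{span}\{e_1,e_2,e_3\}$ and $\pi(C)\subseteq\mathrm{span}\{f_1,f_2,f_3\}$ for two Jordan frames. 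Each such span is singly generated as an algebra, by $x=e_1+2e_2+3e_3$ and $y=f_1+2f_2+3f_3$ respectively, since their spectral projections recover the frames. So the generated subalgebra equals the JB-algebra generated by $x,y$ and $1$. By the classical Shirshov--Cohn theorem for JB-algebras, this algebra is a JC-algebra, hence special. Since $\A$ is exceptional, it cannot equal $\A$, which is the desired contradiction.

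Two points remain to check. First, we need the description of maximal associative subalgebras of $\A$ (via the spectral theorem: any associative subalgebra is commutative and finite-dimensional, its projections are orthogonal sums of minimal ones, and any orthogonal family of minimal projections extends to a frame). Second, we need the fact that in the infinite-dimensional setting, generation passes to closures: $\pi(A)$ is the closed algebra generated by $\pi(B)\cup\pi(C)$, and closedness is automatic in finite dimensions.

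The main obstacle is the reduction step: making precise that a JB-algebra with no exceptional quotient/representation into $\A$ is a JC-algebra, including the passage through $A^{**}$ and the $C(X,\A)$ summand. We would invoke the Alfsen--Shultz--Størmer theorem, that $A$ is JC exactly when the purely exceptional ideal is trivial, together with the fact that purely exceptional factor representations land in $\A$.
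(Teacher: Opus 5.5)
Your proposal is correct and follows essentially the paper's route: reduce via the Alfsen--Shultz--St{\o}rmer theory to a surjection $\pi\colon A\to\A$, observe that $\pi(B)$ and $\pi(C)$ lie in singly generated associative subalgebras, and contradict the Shirshov--Cohn theorem. The differences are cosmetic. The paper notes directly that $\pi(B)\cong\R^m$ is generated by one element with distinct coordinates, instead of enlarging it to the span of a Jordan frame. You should drop your parenthetical about evaluating $C(X,\A)$ at points: such evaluations need not be surjective, and the cited criterion (a JB-algebra is JC exactly when it admits no surjection onto $\A$) already performs the reduction.
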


This result is proved as \Cref{ShirCohn2}. Since the Albert algebra can be generated by three elements \cite[Cor. 2]{AlbPaige}, it is perhaps surprising that a JB-algebra generated by \emph{three projections} is necessarily a JC-algebra.

\begin{theorem}\label{12}
    A JB-algebra is a JC-algebra if it can be generated by three projections (possibly together with the unit element). 
\end{theorem}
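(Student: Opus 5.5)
The plan is to reduce \Cref{12} to \Cref{11}. Given projections $p,q,r$ generating a JB-algebra $B$ (possibly with $1$), it suffices to find two associative Jordan subalgebras $A_1,A_2\subseteq B$ whose union generates $B$. The first choice to try, $A_1=C(p)$ and $A_2$ the subalgebra generated by $q$ and $r$, does not work: two projections generally do not operator commute, so $A_2$ is not associative. We therefore need a more clever decomposition, and the natural idea is to exploit the symmetry $s=2p-1$ (or $1-2p$). Here $U_s$ is a Jordan automorphism of the unital hull, and $U_s x = x$ precisely when $x$ lies in the Peirce subspace $B_1(p)+B_0(p)$.

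The concrete plan runs as follows. First I pass to the unitization, if necessary, so that $1\in B$; a JB-algebra is JC as soon as its unitization is, so nothing is lost. Second, I consider the projections $q$ and $U_s q$, and also $r$ and $U_s r$. The elements $q+U_sq = 2(U_pq+U_{1-p}q)$ and $r + U_s r$ lie in the Peirce-diagonal part $B_1(p)\oplus B_0(p)$, while $q-U_sq$ lies in $B_{1/2}(p)$.

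The subalgebras I would actually use are these. Take $A_1$ to be the closed subalgebra generated by $1$ and $p$, and $A_2$ the closed subalgebra generated by a single element $x$, namely some self-adjoint combination such as $x=q+2r$, or $x=\{q\}+\lambda r$ with generic $\lambda$. By \Cref{11} such a pair gives a JC-algebra, but only if $p$ and $x$ generate $B$. Since two elements alone always give a JC-algebra by Shirshov--Cohn, and $p,q,r$ need not be recoverable from two elements, this approach is suspicious. The better choice is to take $A_1$ generated by $p$ and $A_2$ a maximal associative subalgebra built from $q$ and $r$ in a way that uses a spectral trick. Concretely, I would use the fact that projections $e,f$ satisfy $e+f$ and $(e-f)^2$ operator-commuting with both $e$ and $f$ (the central elements in the two-projection picture). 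I would then aim to show that $B$ is generated by $A_1=C^*(1,p)$ together with the associative algebra $A_2$ generated by the single element $q+2r$ after conjugation. The spectral calculus of $q+2r$ recovers only $q,r$ in commuting cases, so the decisive structural fact must come from elsewhere.

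Since this seems shaky, the fallback I would commit to is the following. Replace \Cref{11} by its proof technique: represent $B$ in its universal enveloping framework, write $B=B_1(p)\oplus B_{1/2}(p)\oplus B_0(p)$, and show that the exceptional ideal of $B^{**}$ vanishes. Factor representations of type $I_3$ over the octonions, i.e.\ $\A$, would require three projections generating $\A$. I would then prove directly that any three projections in $\A$ generate a proper subalgebra, hence a special one. Up to automorphism (via $F_4$ acting transitively on frames), I would put $p$ diagonal and use the rank constraints: projections in $\A$ have rank $0,1,2,3$, and rank $2$ is the complement of rank $1$. So WLOG all three are minimal, $p=e_{11}$, $q=vv^*$, $r=ww^*$ with $v,w\in\O^3$ unit vectors. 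The entries involved generate a subalgebra of $\O$ generated by at most two octonions after normalization (a phase can be absorbed, and $v$ can be moved into $\R e_1+\R e_2$ by the stabilizer of $e_{11}$, which acts as $\mathrm{Spin}(9)$). By Artin's theorem that subalgebra is associative, so the generated JB-algebra lies in $M_3(\H)_{sa}$, which is JC. The main obstacle is precisely this normalization step: one must show that the stabilizer of $p$ and $q$ in $F_4$ is large enough to move $w$ into a quaternionic subspace. After fixing $p$ and $q$, the remaining stabilizer contains $\mathrm{Spin}(7)$, which acts transitively enough on the off-diagonal octonion coordinates of $w$ to reduce them to a copy of $\H$. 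Combining this with the theorem (presumably used for \Cref{11}) that a JB-algebra is JC iff it has no Albert factor representation completes the argument.
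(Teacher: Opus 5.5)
Your committed fallback follows the paper's proof in outline. If the algebra is not JC it maps onto $\A$ (\Cref{JCThm}), so it suffices to show that $\A$ is not generated by $1$ and three projections. This is done by moving two generators into $M_3(\R)_{sa}$ with an automorphism, and then showing that the remaining generator, an atom, lies in $M_3(\K)_{sa}$ for an associative $\K\subsetneq\O$. You were right to discard the attempts via \Cref{11}. Incidentally, $e+f$ does not operator commute with $e$ and $f$ in general; only $(e-f)^2$ does.

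Your normalization of $p=E_{11}$ and $q$ uses three ingredients: complementing to rank $\le 1$, transitivity of $F_4$ on atoms, and transitivity of the stabilizer $\mathrm{Spin}(9)$ on spheres in the Peirce $\tfrac12$-space of $E_{11}$. This is only sketched and rests on Lie-theoretic facts, but it can replace the paper's route (spectral frame of $a$, \Cref{Uxef}, the coordinatization theorem and the $(1,3)$-entry computation). The paper's route allows an arbitrary third element, giving the stronger \Cref{Aapq}; yours needs all three generators to be projections, which is enough here.

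The genuine gap is the last step, which you flag as the main obstacle but do not carry out: why does the third atom $r$ lie in $M_3(\K)_{sa}$ for an associative $\K$?
\begin{itemize}
\item Writing $r=ww^*$ with an arbitrary unit vector $w\in\O^3$ is not legitimate. In general $ww^*$ is not idempotent unless the coordinates of $w$ associate, and that is exactly what has to be shown.
\item \emph{Absorbing a phase} $w\mapsto wu$ does not preserve $ww^*$ in $\O$. For imaginary units $i,j,l$ with $l$ orthogonal to the quaternion algebra generated by $i,j$, one has $(il)(l^{-1}j)=-ij$.
\item The appeal to $\mathrm{Spin}(7)$ acting \emph{transitively enough} is not an argument. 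No transitivity statement is formulated, and in fact none is needed.
\end{itemize}

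What is missing is the structure of atoms of $\A$. The paper imports it from \cite[Prop.~A.16]{vGKR}, and you can also check it directly:
\begin{itemize}
\item Comparing $(2,3)$-entries in $r^2=r$ and using $\operatorname{tr} r=1$ gives $r_{11}r_{23}=r_{21}r_{13}$.
\item If $r_{11}\neq 0$, all entries of $r$ therefore lie in the unital subalgebra of $\O$ generated by $r_{12}$ and $r_{13}$. This subalgebra is associative by Artin's theorem and closed under conjugation.
\item If $r_{11}=0$, the $(1,1)$-entry of $r^2=r$ forces $r_{12}=r_{13}=0$, leaving only the single octonion $r_{23}$.
\end{itemize}
Since $p$ and $q$ are already real, this gives $p,q,r\in M_3(\K)_{sa}\subsetneq\A$ with $\K$ associative. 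That closes the argument without any stabilizer computation for $r$.
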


We prove this result as \Cref{ShirCohn3}. The analogous assertions for JBW-algebras also hold. We give a similar refinement of Macdonald's principle for JB-algebras. Recall that elements $x_1, x_2$ in a JB-algebra $A$ are said to \emph{operator commute} if $x_1 \circ (x_2 \circ a) = x_2 \circ (x_1 \circ a)$ for all $a\in A$.

%, which now allows for two tuples of operator commuting elements and a third element as is made explicit below.

\begin{theorem}\label{13}
    Let $F(X_1,\dots,X_n, Y_1,\dots,Y_m, Z)$ be a real Jordan polynomial which has degree at most $1$ in $Z$ and is satisfied by all JC-algebras. Let $A$ be a JB-algebra. Suppose that $(x_1, \dots, x_n)$ and $(y_1,\dots,y_m)$ are two tuples of mutually operator commuting elements in $A$. Then for every $z\in A$ it holds that $F(x_1,\dots, x_n, y_1, \dots, y_m, z) = 0$. 
\end{theorem}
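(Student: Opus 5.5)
We will reduce Theorem \ref{13} to Theorem \ref{11}, in the same way that the classical Macdonald principle is derived from the Shirshov--Cohn theorem.

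\textbf{Step 1: reduce to associative subalgebras.} In a JB-algebra, mutually operator commuting elements generate an associative JB-subalgebra. So let $C_1$ be the closed unital subalgebra generated by $x_1,\dots,x_n$ (and $1$), and $C_2$ the one generated by $y_1,\dots,y_m$. Both are associative, hence isomorphic to $C(K_i)$ for compact $K_i$. By Theorem \ref{11}, the closed subalgebra $B$ generated by $C_1\cup C_2$ is a JC-algebra.

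\textbf{Step 2: linearity in $z$.} Since $F$ has degree at most $1$ in $Z$, we can write
\[
F(x,y,z)=F_0(x,y)+\sum_k T_k(x,y)\,z ,
\]
where $F_0$ collects the terms without $Z$ and each $T_k$ is a composition of multiplication operators $L_a$ and quadratic operators $U_a$ with $a$ ranging over Jordan monomials in $x,y$. The term $F_0(x,y)$ lies in $B$, and $F_0$ is the specialisation $F(X,Y,0)$ of an identity valid in all JC-algebras. Hence $F_0(x,y)=0$. It therefore suffices to treat the part that is linear in $z$; call it $G(x,y,z)$.

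\textbf{Step 3: handle the extra variable $z$ (the main obstacle).} We cannot simply apply Theorem \ref{11} to the algebra generated by $C_1$, $C_2$ and $z$, since that algebra need not be special. We try two routes, in this order.

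\emph{Route (a): Macdonald-type enlargement.} Use the classical Macdonald argument. The map $z\mapsto G(x,y,z)$ is given by an operator in the multiplication algebra generated by $\{L_b,U_{b,c}: b,c\in B\}$. By the standard Macdonald theory, such a linear-in-$z$ identity holds in all Jordan algebras as soon as it holds in the free special envelope. Concretely, we would replace $z$ by a generic element of a "tangent" algebra: take the split null extension $B\oplus M$, with $M$ a Jordan $B$-bimodule generated by $z$. Linear-in-$z$ identities hold in $A$ provided they hold in the unital universal multiplication envelope of $B$. For special JC-algebras generated by two associative subalgebras, one would need to show that every Jordan bimodule is special, meaning the multiplication envelope factors through an associative envelope.

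\emph{Route (b): a more concrete approach, which I would try first.} Approximate $C_1$ and $C_2$ by finite-dimensional subalgebras spanned by finitely many orthogonal projections $p_i$ and $q_j$. This uses the spectral theorem: pass to the bidual JBW-algebra $A^{**}$, where spectral projections exist and multiplication is separately weak*-continuous. By continuity of $F$ and norm approximation of the $x_i$ and $y_j$ by finite linear combinations of spectral projections in $A^{**}$, we reduce to the case where each tuple consists of linear combinations of mutually orthogonal projections.

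Then use the Peirce decomposition of $A^{**}$ relative to the family $\{p_i\}$ and relative to $\{q_j\}$, splitting $z$ into its Peirce components $z=\sum_{k,l}U_{p_k,p_l}z$. Each component, together with $B$, should lie in a subalgebra that is still special. The hope is that, for fixed $k,l$, the subalgebra generated by the projections and a single Peirce component behaves like a two-generated situation, so that Shirshov--Cohn type results apply. Verifying this specialness claim is the crux. Failing that, one falls back on the bimodule argument of route (a).

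**Step 4: conclude.** Summing over the Peirce components gives $G(x,y,z)=0$. Combined with $F_0(x,y)=0$ from Step 2, this yields $F(x,y,z)=0$ in $A^{**}$, and hence in $A$.
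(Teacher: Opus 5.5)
There is a genuine gap: Step 3, which you yourself identify as the main obstacle, is never carried out.

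Route (a) ends with the requirement that every Jordan bimodule over $B$ be special. That is an unproven claim, and far stronger than what is needed. It also never bridges from ``satisfied by all JC-algebras'' to ``satisfied by all special Jordan algebras'', which any Macdonald-type argument requires; in the paper this bridge is \Cref{sid}.

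Route (b) ends with a ``hope'', and that hope is misdescribed. For $k\neq l$, a nonzero Peirce component $z_{kl}$ never operator commutes with $p_k$: applying $T_{p_k}T_{z_{kl}}$ and $T_{z_{kl}}T_{p_k}$ to $p_k$ gives $\tfrac14 z_{kl}=\tfrac12 z_{kl}$. So $\JB(\{p_k\}\cup\{q_l\}\cup\{z_{kl}\})$ is generated by two associative subalgebras \emph{plus} an extra element. That is not a two-generated situation, and neither Shirshov--Cohn nor \Cref{11} applies to it. Only the diagonal components $z_{kk}$ can be absorbed: they operator commute with every $p_{k'}$, so they join the first associative subalgebra and \Cref{11} handles them. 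In a general JB-algebra, or in $A^{**}$, you have no tool showing that the off-diagonal configuration is special. As written, Steps 3--4 therefore prove nothing beyond the $z$-free part and the diagonal components.

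The missing idea is to verify the identity in the Albert algebra, rather than trying to prove speciality of subalgebras containing $z$.
\begin{itemize}
\item By \Cref{GNJB} it suffices to check $F(x,y,z)=0$ in the JC summand, where it holds by hypothesis, and after each coordinate homomorphism $\pi\colon A\to\A$.
\item The algebra $\JB(x_1,\dots,x_n)$ is associative by \Cref{SocThm}. Its image under $\pi$ is therefore a finite-dimensional associative subalgebra of $\A$, hence isomorphic to some $\R^k$ and generated by a single element $u$. Likewise the image of $\JB(y_1,\dots,y_m)$ is generated by a single element $v$.
\item Write $\pi(x_i)=G_i(u)$ and $\pi(y_j)=H_j(v)$, and set
\[
K(X,Y,Z):=F(G_1(X),\dots,G_n(X),H_1(Y),\dots,H_m(Y),Z).
\]
This polynomial has only three variables and degree at most $1$ in $Z$.
\item $K$ is satisfied by all JC-algebras, hence by all special Jordan algebras by \Cref{sid}, hence by all Jordan algebras by the classical Macdonald principle. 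So $\pi(F(x,y,z))=K(u,v,\pi(z))=0$.
\end{itemize}
The finite-dimensionality of $\A$ is what collapses arbitrary commuting tuples to single generators. That collapse is impossible in a general JB-algebra, where $\JB(x_1,\dots,x_n)\cong C_0(K)$ need not be singly generated. It also makes the passage to $A^{**}$, the spectral approximation and the Peirce splitting of $z$ unnecessary.
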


This result will be established as \Cref{McDImp}. 
Each of these results is proved by making a reduction using \cite[Thm. 9.5]{ASS78} to the case of the Albert algebra and analyzing the Jordan subalgebras of $M_3(\O)_{sa}$ having certain generating sets.\\
\ind Moreover, in \Cref{FJ2Sec} we describe the free unital JB-algebra generated by two projections as the JB-algebra of all continuous functions $f \colon [0,1] \to M_2(\mathbb{R})_{sa}$ into the $3$-dimensional spin factor such that $f(0)$ and $f(1)$ are diagonal matrices. This result is the JB-analogue of a theorem due to Raeburn--Sinclair for C*-algebras \cite{RaeSin}.\\
\ind In order to make the article accessible to a broad spectrum of readers, \Cref{PrelSec} includes a relatively self-contained discussion of the pertitent concepts and results in the theory of Jordan algebras and JB-algebras.\\
%\ind We conclude this introduction by pointing out an algebraic direction for future research. Zelmanov proved in \cite[Thm. 2]{ZelPII} that each prime nondegenerate Jordan algebra over a commutative ring $R$ with $\frac{1}{2}$ is either special or a form of a split Albert algebra. It may be interesting to investigate whether the (identity-)speciality results in \Cref{11} and \Cref{12} can be obtained for (quadratic) Jordan algebras by similar techniques as in this paper, i.e.\ by making a reduction to the case of an Albert algebra and analyzing its subalgebras.

%A JB-algebra (resp.\ JBW-algebra) is called a \emph{JC-algebra} (resp.\ \emph{JW-algebra}) if it embeds in $B(H)_{sa}$ as a norm closed (resp.\ ultraweakly closed) Jordan subalgebra, for some complex Hilbert space $H$. The Shirshov--Cohn theorem for JB(W)-algebras says that each JB- or JBW-algebra which can be generated by two elements (and the identity) is a JC- (resp.\ JW-algebra). Perhaps surprisingly, \emph{three} projections generate a JC-algebra as well.

%The structure of the paper is as follows. \Cref{PrelSec} is preliminary and provides the necessary background on the theory of Jordan algebras and JB-algebras. We devote Section 3 to the generalization of the Shirshov--Cohn theorem and studying JB-algebras generated by projections. In Section 4 we prove the generalized Macdonald's principle.

\section{Preliminaries}\label{PrelSec}

\subsection{Jordan algebras}\label{JdsubSect}

A real \emph{Jordan algebra} is a real vector space $J$ equipped with a commutative (but not necessarily associative) bilinear product $(x, y) \mapsto x \circ y$  which satisfies the following \emph{Jordan identity} for all $x,y\in J$:
\begin{equation}
    x \circ (y \circ x^2) = (x \circ y) \circ x^2.
\end{equation}
A real linear subspace $E$ of $J$ is called a \emph{Jordan subalgebra} of $J$ if it is closed under the product $\circ$, and $E$ is called a (Jordan) \emph{ideal} of $J$ if $x \circ y \in E$ for all $x\in E$ and $y \in J$.\\
\ind We say $J$ is \emph{unital} if it has a \emph{unit element} $1_J$ such that $1_J \circ x = x$ for all $x\in J$. If $J$ is not unital, then we may form its \emph{unitization} $\tilde{J} := \R1_J \oplus J$ with the product $$(\lambda 1_J + x) \circ (\mu 1_J + y) = \lambda\mu 1_J + (\lambda y + \mu x + x \circ y)$$ 
for $\lambda,\mu \in \R$ and $x,y\in A$; then $\tilde{J}$ is a unital Jordan algebra containing $J$ as an ideal.\\
\ind Let $K$ be a second Jordan algebra. A linear map $\phi \colon J \to K$ is called a \emph{Jordan homomorphism} if $\phi(x \circ y) = \phi(x) \circ \phi(y)$. If $J$ and $K$ are unital with unit elements $1_J$ resp.\ $1_K$, then a Jordan homomorphism $\phi\colon J \to K$ is called \emph{unital} if $\phi(1_J)=1_K$.
% %In the absence of $\frac{1}{2}$, t\emph{quadratic} Jordan algebras, but this will not concern us.

\begin{example}
Let $A$ be an associative real algebra. Let $J \subset A$ be a real linear subspace which is closed under taking squares. Then $J$ is a real Jordan algebra under the \emph{Jordan product}
\begin{equation}\label{Jprod}
    x \circ y := \tfrac{1}{2}(xy+yx).
\end{equation}
\end{example}

\begin{example}\label{Aex} Let $\O = \R1 \oplus \sum_{i=1}^7 \R e_i$ be the non-associative real division algebra of Cayley numbers, or octonions. Equip $\O$ with its standard involution, i.e.\ $e_i^* = -e_i$ for all $1\le i\le 7$. The \emph{Albert algebra} is the algebra of $3\times 3$-self adjoint matrices with entries in $\O$ and denoted
$$\A := M_3(\O)_{sa}.$$
The proof that $\A$ is indeed a Jordan algebra is involved and may be found in \cite[Cor. V.2.6]{FaKor}.
\end{example}

In the remainder of this section and the next section, we will implicitly assume that all algebras, vector spaces and homomorphisms are defined over $\R$. However, the real field $\R$ may throughout be replaced by an arbitrary commutative associative ring $R$ with $1$ in which $2$ is invertible.\\
\\
\textbf{The Peirce decomposition.} A useful tool in the structure theory of Jordan algebras is the \emph{Peirce decomposition} with respect to a supplementary system of orthogonal elements. Let $J$ be a Jordan algebra and let $\underline{e} = (e_1, e_2, \ldots, e_n) \in J$ be a system of idempotent elements such that $e_1 + e_2 + \ldots + e_n = 1$. Each multiplication operator $T_{e_r}\colon J \to J$, $T_{e_r}(x) = e_r \circ x$ satisfies the polynomial $X(X-\frac{1}{2})(X-1)=0$ by \cite[Prop. III.1.3]{FaKor}
and commutes with $T_{e_s}$ for $s\neq r$ by \cite[Lemma IV.1.4]{FaKor}. Define for all $1\le r\neq s\le n$ the \emph{Peirce eigenspaces} by
$$J_{rr}(\underline{e}) := \{x\in J: e_r \circ x = x\}, \quad J_{rs}(\underline{e}) := \{x\in J: e_r \circ x = e_s \circ x = \tfrac{1}{2}x\}.$$
Since $T_{e_1} + \dots + T_{e_n} = \Id_J$, we obtain the \emph{Peirce decomposition of} $J$ relative to $\underline{e}$, 
\begin{align}\label{PcJ}
J = \bigoplus_{1\le r\le s\le n} J_{rs}(\underline{e}),
\end{align}
in which the various Peirce eigenspaces satisfy the following multiplication rules \cite[Thm. IV.2.1]{FaKor}:
\begin{alignat}{2} 
J_{rs} \circ J_{rs} &\subset J_{rr} + J_{ss}, \label{Pr1}\\
J_{rs} \circ J_{st} &\subset J_{rt}, &&\quad \text{if } r \neq t, \label{Pr2}\\
J_{rs} \circ J_{tu} &= \{0\}, &&\quad \text{if } \{r,s\} \cap \{t,u\} = \emptyset. \label{Pr3}
\end{alignat}
}

\subsection{Identities and special Jordan algebras}
A Jordan algebra $J$ is called \emph{special} if there exists an associative algebra $A$ and an injective linear map $\alpha \colon J \to A$ such that for all $x,y \in A$ one has $$\phi(x\circ y) = \tfrac{1}{2}(\alpha(x)\alpha(y)+\alpha(y)\alpha(x)).$$
A Jordan algebra which is not special is called \emph{exceptional}. Albert proved in \cite{Albexc} that the Albert algebra $\A$ in \Cref{Aex} is exceptional. More examples are furnished by free Jordan algebras.

\begin{definition}\label{FJS} Let $S$ be a set. The \emph{free unital Jordan algebra} on the set $S$ is a unital Jordan algebra $\FJ(S)$ together with a map of sets $\iota_S\colon S \to J$ satisfying the following universal property. For each unital Jordan algebra $J$ and map of sets $\alpha \colon S \to J$ there exists a unique unital Jordan homomorphism $\psi\colon \FJ(S) \to J$ such that $\psi(\iota(s)) = \alpha(s)$ for all $s\in S$.
\end{definition}

The free unital Jordan algebra exists and the pair $(\FJ(S), \iota_S)$ is unique up to a unique isomorphism \cite[Ch. 1, Sect. 6]{JacStruRep}. A similar free object in the category of \emph{special} unital Jordan algebras exists and is constructed explicitly as follows \cite[p. 684]{LewMcC}. Denote by $\R\{S\}$ the free unital associative algebra on the set $S$, consisting of all real polynomials in the noncommuting variables $X_s$ for $s\in S$. We endow $\R\{S\}$ with the Jordan product \eqref{Jprod}. Let $\FSJ(S)$ be the Jordan subalgebra of $\R\{S\}$ generated by $\{X_s\colon s \in S\}$, and define $\iota_S\colon S \to \FSJ(S)$, $\iota_S(s) := X_s$. Then, given a unital special Jordan algebra $J$, say embedded in the unital associative algebra $A$, and a map of sets $\alpha\colon S \to J$, there exists a unique unital algebra homomorphism $\tilde{\psi}\colon \R\{S\} \to A$ such that $\tilde{\psi}(X_s) = \alpha(s)$ for every $s\in S$, which restricts to a unique unital Jordan homomorphism $\psi\colon \FSJ(S) \to J$ such that $\psi \circ \iota_S = \alpha$. Therefore, the pair $(\FSJ(S), \iota_S)$ is the \emph{free unital special Jordan algebra} on the set $S$, in the sense that it satisfies the universal property in \Cref{FJS} but with $J$ restricted to the class of unital \emph{special} Jordan algebras.\\

For each positive integer $n$, denote $[n] := \{1, 2, \ldots, n\}$. A (real) \emph{Jordan polynomial} $F$ is an element of $\FJ([n])$ for some $n\ge 1$. Let $J$ be a unital Jordan algebra. We say that $J$ \emph{satisfies the Jordan polynomial} $F \in \FJ([n])$ if for all elements $x_1, \ldots, x_n \in J$ the unique unital Jordan homomorphism $\psi \colon \FJ([n]) \to J$ such that $\psi(\iota_{[n]}(k)) = x_i$ for all $1\le k\le n$ satisfies $\psi(F) = 0$. It is easy to see that for the identity $F = 0$ to be satisfiable by some unital Jordan algebra, it is necessary that the constant term of $F$ be zero. It follows that $J$ satisfies $F$ if and only if $\tilde{J}$ satisfies $F$. Without creating ambiguity, we will say that a non-unital Jordan algebra $K$ \emph{satisfies} $F$ if its unitization $\tilde{K}$ satisfies $F$. If $G, H \in \FJ([m])$ are Jordan polynomials, we will also say that a Jordan algebra \emph{satisfies the identity} $G = H$ if it is satisfies the Jordan polynomial $F := G - H$.\\
\ind There exist identities satisfied by all special Jordan algebras, but not by all Jordan algebras. Two identities of total degree $8$ resp. $9$ were exhibited by Glennie in \cite[Thm 4(a)]{Glennie} and read
\begin{align}\label{G8}
G_8: H_8(X,Y,Z) &=H _8(Y, X, Z),\\
H_8(X, Y, Z) &:= \{U_XU_Y(Z), Z, X\circ Y\} - U_XU_YU_Z(X\circ Y). \notag
\end{align}
resp.\ 
\begin{align}\label{G9}
G_9: H_9(X,Y,Z) &=H _9(Y, X, Z),\\
H_9(X, Y, Z) &:= 2(U_XZ) \circ U_{Y,X}U_Z(Y^2) - U_XU_ZU_{X,Y}U_Y(Z). \notag
\end{align}
%In the free special Jordan algebra $\FSJ(X, Y, Z) \subset \R\{X,Y,Z\}$ one has
%$$H_8(X, Y, Z) = \{X, Y, Z, Y, X, Z, X, Y\}$$...
Glennie's identities are satisfied by $\FSJ(X, Y, Z)$ but not by the Albert algebra $\A$.\\
\ind A Jordan algebra $J$ is called \emph{identity-special} if it satisfies each identity satisfied by all special Jordan algebras. A unital Jordan algebra $J$ is identity-special if and only if $J$ is a homomorphic image of a (free) special Jordan algebra. (If $J$ is identity-special then, with $S$ equal to the set underlying $J$, the surjective unital Jordan homomorphism $\psi\colon \FJ(S) \to \tilde{J}$ such that $\psi \circ \iota_S = \Id_J$ factorizes through $\FSJ(S)$.) Glennie's identity shows that the Albert algebra $\A$ is not identity-special, and thus gives another proof of the Albert algebra's exceptionality.\\
\ind P.M. Cohn showed that a quotient $\FSJ(S)/K$ of the free special Jordan algebra $\FSJ(S)$ by a Jordan ideal $K$ is special if and only if $K = \langle K\rangle \cap \FSJ(S)$, where $\langle K\rangle \subset \R\{S\}$ denotes the ideal of the associative algebra $\R\{S\}$ generated by $K$. He used this speciality criterion to give the following example of an identity-special but not special Jordan algebra \cite[Scholium 6.1]{Cohn_1954}.

\begin{example}\label{isnots} Let $K$ be the Jordan ideal of $\FSJ([3])$ generated by $X_1^2 - X_2^2$. Then the Jordan algebra $\FSJ([3])/K$ is identity-special but not special.
\end{example}

We now come to two foundational results in the theory of Jordan algebras. A.I. Shirshov showed in \cite[Thm. 2]{Shi56} that $\FJ([2])$ is a special Jordan algebra. Cohn showed in \cite[Thm. 5.2]{Cohn_1954} that every homomorphic image of $\FSJ([2])$ is again special. The Shirshov--Cohn theorem combines these results. 

\begin{theorem}[Shirshov--Cohn]
Each Jordan algebra which can be generated by two elements (possibly together with the unit element) is special.
\end{theorem}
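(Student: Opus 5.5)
The statement combines two results, so the plan is to prove them separately and then join them.

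\textbf{Step 1 (Shirshov): $\FJ([2])$ is special.} I would show that the canonical unital homomorphism $\pi\colon \FJ([2]) \to \FSJ([2])$, sending $\iota(k) \mapsto X_k$, is injective. This gives $\FJ([2]) \cong \FSJ([2])$, which is special.

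\textbf{Step 2 (Cohn): every quotient $\FSJ([2])/K$ is special.} The paper records Cohn's criterion: such a quotient is special if and only if $K = \langle K\rangle \cap \FSJ([2])$.

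\textbf{Step 3 (conclusion).} Let $J$ be generated by $a,b$ and possibly $1$, passing to $\tilde J$ if needed. The universal property gives a surjection $\FJ([2]) \to \tilde J$. By Step 1 this is a surjection $\FSJ([2]) \to \tilde J$. By Step 2, $\tilde J$ is special, and hence so is its subalgebra $J$.

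\textbf{Proof of Step 1.} Use the reversal involution $*$ on $\R\{X_1,X_2\}$ that fixes $X_1, X_2$.
\begin{enumerate}
\item First prove Cohn's description: $\FSJ([2])$ equals the space $H$ of $*$-symmetric elements. The inclusion $\subset$ is clear. For $\supset$, it suffices to show that every symmetrized monomial $w + w^*$ is a Jordan polynomial. Induct on degree. If $w = X_i u X_i$, then $w + w^* = U_{X_i}(u+u^*)$. Otherwise $w = X_i u X_j$ with $i \ne j$, and one uses tetrads $\{X_i, v, X_j\}$. The key fact is that with only two letters every tetrad reduces: $2\{X_1,X_1,u\}$ is expressible, and $\{X_1,X_2,v\}$ reduces via identities such as $X_1X_2v + vX_2X_1 = \{X_1X_2+X_2X_1, v\} - \ldots$ to lower length.
\item Next, show that no identity of the free Jordan algebra in two variables is lost under $\pi$. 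Build the multiplication operators of $\FJ([2])$ acting on a candidate model. Concretely, define on $H$ a Jordan structure and show that $\FJ([2])$ maps onto it with trivial kernel. The approach is to use the Macdonald-type argument: every element of $\FJ([2])$ is a linear combination of Jordan monomials, and the Jordan identity (linearized) suffices to rewrite these in a normal form indexed by reversal-classes of associative words. This gives $\dim$ comparisons degree by degree. In each bidegree, the spanning set in $\FJ([2])$ has at most as many elements as a basis of $H$. Since $\pi$ is surjective onto $H$, it is injective.
\end{enumerate}

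\textbf{Proof of Step 2.} Take $f \in \langle K\rangle \cap H$ and write $f = \sum p_i k_i q_i$ with $k_i \in K$. Apply symmetrization $f = \tfrac12(f+f^*)$. Then, using that every element of $\R\{X_1,X_2\}$ decomposes via the two-generator tetrad reduction into Jordan expressions, show that $\sum (p_i k_i q_i + q_i^* k_i p_i^*)$ is a Jordan combination of $k_i$ with Jordan elements. Namely, $p k q + q^* k p^*$ lies in the Jordan ideal generated by $k$ whenever $p, q$ are monomials in two letters. This is done by induction on $\deg p + \deg q$, using $U$-operators and tetrads $\{a, k, b\}$ with $a, b \in \{X_1, X_2\}$.

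\textbf{Main obstacle.} The hard part is the injectivity in Step 1, i.e.\ the normal-form or dimension count for $\FJ([2])$. My fallback is to instead invoke Macdonald's theorem in its operator form: identities linear in a third variable that hold in special algebras hold in all Jordan algebras. From this, deduce that the kernel of $\pi$ vanishes by applying Macdonald to the $U$-operators generating $\FJ([2])$ as a module over its multiplication algebra.
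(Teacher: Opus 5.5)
Your three-step skeleton is exactly how the paper frames the result: Shirshov's theorem, then Cohn's theorem on homomorphic images of $\FSJ([2])$, then assembly via the universal property. The paper's proof is only the citation \cite[Thm.~10]{JacStruRep}. Had you simply cited the two ingredients, Step~3 would be a complete and correct assembly.

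The gap is that your proof of Step~1 never establishes the one hard thing, namely injectivity of $\pi\colon \FJ([2]) \to \FSJ([2])$, which \emph{is} Shirshov's theorem. In sub-step~(2) you assert that the linearized Jordan identity ``suffices'' to rewrite every two-variable Jordan monomial into a spanning set of $\FJ([2])$. That set should have, in each bidegree, at most as many elements as there are reversal classes of associative words. But you give no rewriting procedure, no candidate normal form, and no reason the count comes out right. The dimension argument after that is fine. However, given the surjectivity you already have, the existence of such a spanning set is equivalent to the statement you are proving.

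Your fallback does not repair this. Macdonald's theorem does imply Shirshov's theorem immediately: a two-variable identity has degree $0\le 1$ in $Z$, so no detour through $U$-operators is needed. But in the paper, as in Jacobson's proof that it outlines, Macdonald's theorem is proved \emph{using} Shirshov's theorem. One needs $\FJ([2])\cong\FSJ([2])$ to regard it as a subalgebra of both $\FJ([3])$ and $\FSJ([3])$ and to compare the multiplication algebras in \eqref{sigma}. So invoking Macdonald here is circular.

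Step~2 reaches a true conclusion, but the induction you sketch does not close as stated. Take $p = ap'$ and $q = q'b$ with letters $a\neq b$. Then the triple product $\{a,\, p'kq' + q'^*kp'^*,\, b\}$ equals $pkq + q^*kp^*$ plus the companion term $(aq'^*)k(p'^*b) + (bp')k(q'a)$. This companion term has the same total degree $\deg p + \deg q$, so induction on that degree makes no progress.

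What you actually need is Cohn's reversibility theorem for \emph{three} letters: every reversal-symmetric element of $\R\{X_1,X_2,X_3\}$ lies in $\FSJ([3])$. Then $pX_3q + q^*X_3p^*$ is a combination of Jordan monomials, each containing $X_3$ exactly once. The substitution $X_3\mapsto k$ therefore sends it into the Jordan ideal of $\FSJ([2])$ generated by $k$. The same reasoning explains why the argument fails for three generators, where four letters would be needed; cf.\ \Cref{isnots}.
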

\begin{proof}
    See \cite[Thm. 10]{JacStruRep}.
\end{proof}

Shirshov's theorem may be stated by saying that the canonical map $\FJ([2]) \to \FSJ([2])$ is an isomorphism. I.G. Macdonald proved in \cite{McD} that each non-zero element of the kernel of $\FJ([3]) \to \FSJ([3])$ has degree at least $2$ in each of the generators. This result has come to be known as \emph{Macdonald's principle}.

\begin{theorem}[Macdonald]
Let $F(X, Y, Z)$ be a Jordan polynomial of degree at most $1$ in $Z$. If $F$ is satisfied by all special Jordan algebras, then $F$ is satisfied by all Jordan algebras.
\end{theorem}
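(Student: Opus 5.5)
The plan is the classical route: reduce to the free Jordan algebra on three generators, pass to the multiplication (universal associative envelope) structure attached to the two generators $X, Y$, and use the Shirshov--Cohn theorem to control that structure.

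First, by linearity and homogeneity we may assume $F$ is homogeneous of degree $1$ in $Z$; if $F$ has degree $0$ in $Z$, it lies in $\FJ([2])$ and the claim is Shirshov's theorem that $\FJ([2]) \to \FSJ([2])$ is an isomorphism. For degree exactly $1$, every such element of $\FJ([3])$ has the form $F = T(Z)$. Here $T$ lies in the subalgebra $\mathcal{M}(X,Y)$ of $\End(\FJ([3]))$ generated by the multiplication operators $L_a$ with $a\in \FJ([2])$, together with the operators $U_{a,b}$ ($a,b \in \FJ([2])$). This follows by induction on the construction of Jordan monomials, using that $Z$ enters linearly. Correspondingly, in $\FSJ([3]) \subset \R\{X,Y,Z\}$ the linear-in-$Z$ part consists of the elements $\sum_i p_i Z q_i$ with $p_i,q_i \in \R\{X,Y\}$, and the canonical map sends $T(Z)$ to $\tilde T(Z)$. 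Here $\tilde T$ is the image of $T$ under the representation $L_a \mapsto \tfrac12(\lambda_a+\rho_a)$ on $\R\{X,Y\}\otimes\R\{X,Y\}^{op}$, acting by $p\otimes q \colon Z \mapsto pZq$.

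The key step is to show that this representation is faithful on the linear-in-$Z$ part. Concretely, suppose $\sum_i p_i Z q_i = 0$ in $\R\{X,Y,Z\}$, where the sum is the image of $T(Z)$. We must deduce $T(Z)=0$ in $\FJ([3])$. I would use the universal multiplication envelope $U(\FJ([2]))$ of the Jordan algebra $\FJ([2])$. By Shirshov, $\FJ([2])=\FSJ([2])$, and by Cohn's theorem (homomorphic images of $\FSJ([2])$ are special) one can identify this envelope with $\R\{X,Y\}\otimes \R\{X,Y\}^{op}$, modulo the reversal involution. The identification works because the free special two-generated algebra consists of the reversal-symmetric elements of $\R\{X,Y\}$; this is Cohn's description of $\FSJ([2])$.

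Then $\FJ([3])_{\deg_Z=1}$ is the free unital Jordan bimodule over $\FJ([2])$ on one generator $Z$, namely $U(\FJ([2]))\cdot Z$. This is the standard fact that the degree-one part of a free algebra is the free bimodule. The comparison map to $\R\{X,Y\}Z\R\{X,Y\}$ is then the map from the free module to the associative special bimodule. One shows it is injective by producing an inverse from the tensor description, sending $pZq$ to the appropriate symmetric operator in $U$. This inverse uses that both $pZq+q^*Zp^*$ and the operators $U_{p,q}$-type combinations can be written Jordan-wise whenever $p,q$ are built from Jordan elements in two variables.

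The main obstacle is this last identification, i.e.\ that the universal multiplication envelope of $\FJ([2])$ injects into $\End$ of the special envelope. It is the genuinely Macdonald-specific input: one needs a normal form for monomials $pZq$ with $p,q$ words in $X,Y$, and must show that each such reversal-symmetrised expression $pZq + q^*Zp^*$ is expressible via $U$- and $L$-operators in $X,Y$. I would try first an induction on $|p|+|q|$ using the identities $U_{X}(pZq+\dots)$ and $\{X, \cdot, \cdot\}$ to peel off letters. The remaining relations would be verified only in the two-variable algebra, where Shirshov--Cohn guarantees they already hold.
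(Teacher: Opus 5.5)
\textbf{Verdict: there is a genuine gap.} Your reduction is the right one, and it is the same as the paper's (Jacobson's). After splitting off the $Z$-free part via Shirshov, everything comes down to injectivity of the map from the multiplication algebra generated by the $L_a$, $a\in\FJ([2])$, acting on the $Z$-linear part $U(\FJ([2]))Z$ of $\FJ([3])$, into $\R\{X,Y\}\otimes\R\{X,Y\}^{\mathrm{op}}$. This is the paper's map $\sigma$.

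That injectivity is the whole theorem, and the first justification you give for it is not valid. Cohn's theorem concerns homomorphic images of $\FSJ([2])$ as Jordan algebras. The equality $\FSJ([2])=H(\R\{X,Y\},*)$ with the reversal-symmetric elements only describes the \emph{image} of your map. So does its three-generator analogue, which is what you would actually need to identify the $Z$-linear part of $\FSJ([3])$ with the span of the $pZq+q^*Zp^*$. That image is the fixed-point subalgebra of the automorphism $p\otimes q\mapsto q^*\otimes p^*$, not a quotient ``modulo'' the involution.

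Whether the \emph{kernel} vanishes is a different question. It is the speciality of the free Jordan bimodule over $\FJ([2])$, i.e.\ of the split null extension $\FJ([2])\oplus U(\FJ([2]))Z$. That algebra is generated by three elements $X,Y,Z$, so Shirshov--Cohn says nothing about it. The claimed identification of the envelope is therefore Macdonald's theorem itself, and invoking it is circular.

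Your fallback does not close the gap either. An inverse would require elements $M_{p,q}(Z)\in\FJ([3])$ (with $p,q$ monomials in $X,Y$ and $M_{p,q}=M_{q^*,p^*}$) specializing to $pZq+q^*Zp^*$. You would then have to prove, \emph{inside} $\FJ([3])$, that their span contains $Z$ and is stable under $L_X$ and $L_Y$ with the special-algebra rules, for example $X\circ M_{p,q}(Z)=\tfrac12\bigl(M_{Xp,q}(Z)+M_{p,qX}(Z)\bigr)$.

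These are identities in $X,Y,Z$ that are linear in $Z$, which is exactly the kind of identity the theorem is about. So they cannot be ``verified only in the two-variable algebra''. Shirshov--Cohn controls the elements $a\in\FJ([2])$, not the relations among the operators $L_a$ acting on a third element. Those relations must be derived from the defining relations of the universal multiplication envelope (the linearized Jordan identity), and that derivation is the genuinely hard combinatorial work. It is what Jacobson's common presentation of both multiplication algebras, cited in the paper, supplies, or alternatively Macdonald's normal-form argument. Your proposal does not contain it.
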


\begin{proof}
We give an outline of the proof given by Jacobson in \cite{JacMcD}. Given a subalgebra $B$ of a Jordan algebra $J$, we denote by $M_B(J)$ the \emph{multiplication subalgebra} of $\End(J)$ generated by the multiplication operators $L(b)$ for $b\in B$. By Shirshov's theorem, $\FJ([2]) \cong \FSJ([2])$ may be considered as a Jordan subalgebra of both $\FJ([3])$ and $\FSJ([3])$.
The canonical surjective homomorphism $\FJ([3]) \to \FSJ([3])$ induces a surjective algebra homomorphism
\begin{equation}\label{sigma}
\sigma\colon M_{\FJ([2])}(\FJ[3]) \to M_{\FJ([2])}(\FSJ[3]).
\end{equation}
Macdonald's principle is equivalent to the assertion that $\sigma$ is injective (confer \cite[B.4.1]{McC04}). Jacobson proved that $\sigma$ is indeed an isomorphism by presenting both multiplication algebras in \eqref{sigma} using the same generators and relations.  
\end{proof}

We conclude this section by an elementary lemma, which will prove useful later.

\begin{lemma}\label{tauLem} Let $A$ be a real Jordan algebra. Let $\tau$ be a linear topology or linear convergence structure on $A$ such that multiplication on $A$ is separately $\tau$-continuous. Let $F$ be a Jordan polynomial and let $D$ be a Jordan subalgebra of $A$ satisfying $F$. Then the $\tau$-closure $\overline{D}^{\tau}$ of $D$ is a real Jordan subalgebra of $A$ satisfying $F$.
\end{lemma}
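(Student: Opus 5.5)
The plan is to show two things separately: that $\overline{D}^{\tau}$ is a Jordan subalgebra, and that it satisfies $F$. Both come from continuity. For the first, note that $\overline{D}^{\tau}$ is a linear subspace, because addition and scalar multiplication are continuous for a linear topology or linear convergence structure. For closure under $\circ$, I would argue in two stages using separate continuity. Fix $d\in D$. The map $x\mapsto d\circ x$ is $\tau$-continuous and sends $D$ into $D$, so it sends $\overline{D}^{\tau}$ into $\overline{D}^{\tau}$. Now fix $y\in \overline{D}^{\tau}$. The map $x\mapsto x\circ y$ is continuous and, by the first stage, sends $D$ into $\overline{D}^{\tau}$. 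Since $\overline{D}^{\tau}$ is closed, this map sends $\overline{D}^{\tau}$ into $\overline{D}^{\tau}$. In the convergence-structure setting, I would phrase this with nets or filters, and read "closure" as the closed hull. If one only uses the adherence, the same argument goes through by iterating, or one simply takes the closed hull throughout.

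For the identity, I take $F \in \FJ([n])$ and let $\tilde{F}$ be the corresponding polynomial map $A^n\to \tilde{A}$ (or $A^n \to A$ when $A$ is unital). The key step is a multi-stage separate-continuity argument. Every Jordan monomial is built from the variables by iterated products. For fixed values of all other arguments, the map $x_k \mapsto \tilde{F}(x_1,\dots,x_n)$ is a finite sum of compositions of multiplication operators $L(a)$ with fixed $a$, applied to $x_k$. Because products like $x_k \circ x_k$ occur, this map is not linear in $x_k$, so the argument needs more care. The plan is to first linearize $F$ fully into multilinear components: over $\R$, a polynomial satisfied by $D$ has all its homogeneous components, and all their full linearizations, satisfied by $D$. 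Each multilinear polynomial $G(x_1,\dots,x_N)$ is separately continuous in each variable, being a composition of continuous maps $L(a)$ once the other arguments are fixed. I would then replace the arguments in $D$ by arguments in $\overline{D}^{\tau}$ one at a time. Fix $x_2,\dots,x_N\in D$; the continuous map $x_1\mapsto G(x_1,\dots)$ vanishes on $D$, hence on $\overline{D}^{\tau}$. Next fix $x_1 \in \overline{D}^{\tau}$ and $x_3,\dots\in D$, and vary $x_2$, and so on. Finally, restitution recovers $F$ from its full linearizations, since $\R$ has characteristic $0$; this is done by evaluating at equal arguments and dividing by factorials. Hence $\overline{D}^{\tau}$ satisfies $F$.

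A subtlety is the unit. Satisfying $F$ is defined through the unitization, so $F$ may involve the constant $1$. In that case I would extend $\tau$ to $\tilde{A}=\R1\oplus A$ via the product structure. Alternatively, I would note that a Jordan polynomial evaluated in $\tilde{D}$ is a sum of polynomials in elements of $D$ with $1$ deleted; multiplication by $1$ is the identity, so continuity is preserved. The main obstacle I expect is justifying that linearization commutes with "satisfied by $D$". This uses that $D$ is a real vector space: the polynomial in scalars $\lambda_i$ obtained from $\tilde{F}(\sum_i \lambda_i x_i)$ vanishes identically, so its coefficients vanish. That step is a Vandermonde argument, and I would carry it out explicitly.
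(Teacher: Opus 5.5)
Your proposal is correct and follows essentially the paper's route: reduce to the homogeneous components $F_d$ by scaling (the Vandermonde step), pass to complete linearizations, replace the arguments by elements of $\overline{D}^{\tau}$ one variable at a time using separate continuity and multilinearity, and then restitute. Working with the closed hull directly for convergence structures (instead of the paper's transfinite recursion over adherences), and tracking the factorials in restitution and the unit explicitly, are minor refinements. As in the paper, the step \emph{vanishes on $D$, hence on $\overline{D}^{\tau}$} tacitly uses that $\{0\}$ is $\tau$-closed, i.e.\ that $\tau$ is Hausdorff.
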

\begin{proof}
First assume that $\tau$ is a linear topology. Then $\overline{D}^{\tau}$ is readily seen to be a Jordan subalgebra of $A$. The Jordan polynomial $F$ is the sum of homogeneous Jordan polynomials
$$F(X_1,\ldots, X_n) = \sum_{d\in \Z_{\ge 0}^n} F_d(X_1, \ldots, X_n)$$
where for each $d = (d_1, \ldots, d_n) \in \Z_{\ge 0}^n$ and $1\le i\le n$, the Jordan polynomial $F_d$ is homogeneous of degree $d_i$ in $X_i$. Then for all $\lambda_1,\ldots,\lambda_n \in \R$ and $x_1,\ldots,x_n \in D$ we have
$$0 = F(\lambda_1 x_1,\ldots,\lambda_n x_n) = \sum_{d\in \Z_{\ge 0}^n} \lambda_1^{d_1}\cdots \lambda_n^{d_n} F_d(x_1,\ldots,x_n).$$
Since $\R$ is an infinite field, it follows that $D$ satisfies each $F_d$ (and that $F_0 = 0$ if $D\neq 0$). Fix $d\in \Z_{\ge 0}^n$ such that $m := \sum_{i=1}^n d_i > 0$. Let $\tilde{F}_d(W_1,\ldots,W_m)$ be the complete linearization of $F_d$, so $\tilde{F}_d$ is linear in each of its $m = \sum_{i=1}^n d_i$ variables and for all $x_1,\ldots,x_n \in A$ we have
\begin{equation}\label{Ft}
\tilde{F}_d(
    \underbrace{x_1, x_1, \ldots, x_1}_{d_1 \text{ times}}, 
    \underbrace{x_2, x_2, \ldots, x_2}_{d_2 \text{ times}}, 
    \ldots, 
    \underbrace{x_n, x_n, \ldots, x_n}_{d_n \text{ times}}) = F(x_1,\ldots,x_n).
\end{equation}
By induction on $0\le k\le m$ we prove that for all $w_1,\ldots,w_k \in \overline{D}^{\tau}$ and $w_{k+1},\ldots,w_m \in D$ we have $\tilde{F}_d(w_1, \ldots, w_m) = 0$. The induction base holds since $D$ satisfies $\tilde{F}_d$. Let $0< l\le m$ and assume the statement is proved for $k = l-1$. Let $w_1,\ldots,w_l \in \overline{D}^{\tau}$ and $w_{l+1},\ldots,w_m \in D$ be arbitrary. Choose a net $(a_\lambda)_\lambda$ in $D$ such that $a_\lambda \stackrel{\tau}{\to} w_k$. Since multiplication is separately $\tau$-continuous and $\tilde{F}_d$ is linear in $W_{k}$, we have by the inductive hypothesis
$$\tilde{F}_d(w_1, \ldots, w_m) = \lim_\lambda \tilde{F}_d(w_1, \ldots, w_{l-1}, a_\lambda, w_{l+1}, \ldots, w_m) = \lim_\lambda 0 = 0.$$
This proves the statement for $k=l$ and completes the inductive step. Now by induction the statement holds for $k=m$, whence $\tilde{F}_d$ is satisfied by $\overline{D}^{\tau}$. But then $\overline{D}^{\tau}$ also satisfies $F_d$ in view of \eqref{Ft}. We conclude that $F = \sum_{d} F_d$ is satisfied by $\overline{D}^{\tau}$.\\
\ind We now return to the general case that $\tau$ is a linear convergence structure. Using the same argument, one shows by transfinite recursion on an ordinal $\alpha$ that elements of the $\alpha$-adherence $\overline{D}^{\tau,\alpha}$ of $D$ satisfy each $\tilde{F}_d$ and that $\overline{D}^{\tau,\alpha}$ is a Jordan subalgebra of $A$ if $\alpha$ is a limit ordinal. Since the $\tau$-closure $\overline{D}^{\tau}$ of $D$ is equal to the $\alpha$-adherence $\overline{D}^{\tau,\alpha}$ for a sufficiently large limit ordinal $\alpha$, the proof is complete.
\end{proof}
%and if $w_1 = w_2 = \ldots = w_{d_1} = x_1$, $w_{d_1+1} = \ldots = w_{d_1+d_2} = x_2$, $\ldots$, $w_{d_{n-1}+1} = \ldots = w_{m} = x_n$, then $\tilde{F}_d(w_1, \ldots, w_m) = F(x_1, \ldots, x_n)$.

\begin{example} An example of a linear convergence structure that is not topological to which \Cref{tauLem} applies is given by order convergence on a unital JB-algebra. Indeed, the argument used in \cite[Prop. 2.4]{AS03} to prove that multiplication on a JBW-algebra is separately $\sigma$-weakly continuous is adapted without difficulty to show that the multiplication on a unital JB-algebra is separately order continuous.
\end{example}

\subsection{JB-algebras}
A \emph{JB-algebra} is a real Jordan algebra $J$ which is complete in a norm $\lVert \cdot \rVert$ satisfying for all $x,y \in J$:
\begin{enumerate}[label={\upshape(\arabic*)}]
    \item $\lVert x\circ y\rVert \le \lVert x\rVert \lVert y\rVert$;
    \item $\lVert x\rVert^2 = \lVert x^2\rVert$;
    \item $\lVert x^2\rVert \le \lVert x^2+y^2\rVert$.
\end{enumerate}
The set of squares $$J_+ := \{x^2: x \in J\}$$
is a closed convex cone in $J$, which determines an Archimedean partial order on $J$ given for $x,y\in J$ by
$$x \ge y :\iff x-y \in J_+.$$
Axiom (3) thus asserts that the norm $\lVert\cdot\rVert$ is monotone on $J_+$. As is the case for C*-algebras, there is at most one norm on a real Jordan algebra making it a JB-algebra.\\
\ind A \emph{JB-subalgebra} of $J$ is a closed linear subspace $E$ of $J$ which is closed under the product of $J$. The smallest JB-subalgebra of $J$ containing a given set $S\subset J$ is denoted $\JB(S)$ and called the \emph{JB-algebra generated by }$S$. The JB-algebra generated by a single element $x$ and possibly the unit element is an associative Banach algebra.\\
%An element $x \in J$ is called \emph{Jordan invertible} if there exists $y\in J$ such that $x \circ y = 1_J$ and $x^2 \circ y = x$,
\ind Let $J$ be a JB-algebra with unit element $1_J$. An element $x \in J$ is called \emph{invertible} if $x$ is invertible in the associative Banach algebra $\JB(1,x)$. The \emph{spectrum} of $x$ is the set
\begin{equation}
\sigma(x) := \{\lambda \in \R: \lambda\text{ is not Jordan invertible in }J\}.
\end{equation}
Then according to the spectral theorem in JB-algebras, there is an isomorphism
$$\JB(1,x) \cong C(\sigma(x), \R),$$
sending $x$ to the identical function $\id_{\sigma(x)}: t \mapsto t$. Hence, the norm on $J$ coincides with the spectral radius norm:
\begin{equation}
    \lVert x\rVert = \sup\{|\lambda|: \lambda \in \sigma(x)\}.
\end{equation}
Moreover, $1_J$ is an Archimedean order unit, i.e.\
for all $x\in J$ there exists $\lambda\ge 0$ with $x\le \lambda 1_J$, and the norm on $J$ also coincides with the associated \emph{order unit norm} (cf.\ \cite[Prop. 3.3.10]{HOSt84}):
\begin{equation}
    \lVert x\rVert = \sup\{\lambda \ge 0: -\lambda 1_J \le x \le \lambda 1_J\}.
\end{equation}
If $H$ is a non-unital JB-algebra, then it was shown by Behncke \cite{Behncke} that the unitization $\tilde{H} = \R 1_H \oplus H$ is a JB-algebra in the spectral radius norm (see also \cite[Thm. 3.3.9]{HOSt84}).

\begin{examples}\label{JBEgs}
(1) The associative JB-algebras coincide, up to isomorphism, with spaces $C_0(X, \R)$ for some locally compact Hausdorff space $X$.\\
(2) Each C*-algebra $A$ gives rise to a JB-algebra $A_{sa}$, by endowing the real vector space $A_{sa} := \{x\in A: x = x^*\}$ of self-adjoint elements with the \emph{Jordan product}
\begin{equation}
    x \circ y := \tfrac{1}{2}(xy+yx)
\end{equation}
and the induced norm.\\
(3) The Albert algebra is a JB-algebra under the spectral norm.\\
(4) Let $(H, \langle \cdot,\cdot\rangle)$ be a real Hilbert space. We endow $V := \R \oplus H$ with the product
$$(\lambda, \xi) \circ (\mu, \eta) = (\lambda \mu, \lambda \eta + \mu \xi + \langle \xi, \eta\rangle).$$
and the norm $\lVert (\lambda, \xi)\rVert = |\lambda| + \langle \xi,\xi\rangle^{1/2}$. Then $V$ is a JB-algebra called a \emph{spin-factor}.
\end{examples}

%\emph{spectral norm}
%$$\lVert x \rVert := \sup \{|\lambda|: \lambda \in \R, \lambda - x\text{ is not Jordan invertible in }\A\}.$$
%\\
%\ind If $A$ is a finite-dimensional \emph{formally real} (i.e. $x^2+y^2=0$ implies $x=y=0$ for all $x,y\in A$) Jordan algebra, then $A$ is a JB-algebra under the spectral norm
%$$\lVert x \rVert := \sup \{|\lambda|: \lambda \in \R, \lambda - x\text{ is not invertible in }A\}.$$
%In this way, finite dimensional JB-algebras are in one-to-one correspondence with the finite-dimensional formally real Jordan algebras, also known as \emph{Euclidean Jordan algebras}. The Albert algebra $\A := M_3(\O)_{sa}$ is formally real, hence $\A$ is a JB-algebra for the spectral norm.\\
Let $A$ be a JB-algebra. We say $A$ is a \emph{JC-algebra} if $A$ is isometrically isomorphic to a closed Jordan subalgebra of $B(H)_{sa}$ for some complex Hilbert space $H$. We call $A$ \emph{purely exceptional} if $A$ admits no non-zero homomorphisms into a JC-algebra. Each JB-algebra $A$ has a unique closed ideal $J$ such that $A/C$ is a JC-algebra and $J$ is purely exceptional \cite[Thm. 4.19]{AS03}.\\
\ind Alfsen--Shultz--Størmer established in \cite{ASS78} the following Gelfand--Naimark type theorem for JB-algebras. To state it, note that for an index set $I$ and a JB-algebra $B$, the $\ell^{\infty}$ direct sum
$$\ell^{\infty}(I, B) := \{f\colon I \to B \text{ with }  \textstyle{\sup_{i\in I} \lVert f(i)\rVert < \infty}\}$$
with the supremum norm is again a JB-algebra under the pointwise Jordan product.

\begin{theorem}[Gelfand--Naimark theorem for JB-algebras]\label{GNJB} Each JB-algebra is isomorphic to a closed Jordan subalgebra of $B(H)_{sa} \oplus \ell^{\infty}(I, \A)$ for some complex Hilbert space $H$ and index set $I$.
\end{theorem}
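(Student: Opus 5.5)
The Gelfand--Naimark theorem for JB-algebras (\Cref{GNJB}) is the Alfsen--Shultz--Størmer theorem \cite[Thm. 9.5]{ASS78}. We will not reprove it from scratch; we only sketch how the argument is organised. The proof goes through the states of $A$ and the JBW-algebra framework.

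We may assume that $A$ is unital, since by Behncke's result the unitization $\tilde A$ is again a JB-algebra containing $A$ as a closed subalgebra. For each state $\rho$ of $A$, consider the bidual $A^{**}$. This is a JBW-algebra, and the GNS-type construction gives a factor representation $\pi_\rho\colon A\to M_\rho$, where $M_\rho$ is a JBW-factor.

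The structure theory of JBW-factors then supplies a dichotomy. Either $M_\rho$ is a JW-factor, i.e.\ it embeds in some $B(H_\rho)_{sa}$, or $M_\rho\cong\A$. The key step here is the following. If $M_\rho$ is not of type $I_2$ and is not JW, then coordinatization via a system of at least three orthogonal equivalent projections (Jacobson--McCrimmon coordinatization together with the Peirce rules \eqref{Pr1}--\eqref{Pr3}) forces the coordinate algebra to be alternative but not associative. That coordinate algebra must therefore be the octonions, and the factor must be $M_3(\O)_{sa}$. Type $I_2$ factors are spin factors, which are JC.

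Next, sum over states. Collect the JW-factor representations into a single representation on $H:=\bigoplus H_\rho$, and collect the exceptional ones into $\ell^\infty(I,\A)$. The resulting map
\[
\pi\colon A\to B(H)_{sa}\oplus\ell^\infty(I,\A)
\]
is a Jordan homomorphism. It is isometric because the norm equals the order unit norm, and because factor states separate points and are norm-determining: by Krein--Milman, pure states suffice, and each pure state gives a factor representation. Injective Jordan homomorphisms between JB-algebras are isometric, so the image is closed.

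The main obstacle is the classification step, i.e.\ showing that a JBW-factor which is not JW is exactly $\A$. This needs the existence of enough equivalent projections in non-type-$I_2$ factors, plus the coordinatization theorem. For that step we would simply cite \cite{ASS78} or \cite[Ch. 7]{HOSt84}.
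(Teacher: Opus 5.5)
Your outline is correct as a sketch, with the factor classification cited. It is organised differently from the paper, which gives no representation-theoretic argument. The paper embeds $A$ isometrically into its bidual $A^{**}$, a JBW-algebra, and applies the Alfsen--Shultz--St\o{}rmer structure theorem $A^{**}\cong M\oplus C(X,\A)$, with $M$ a JW-algebra and $X$ compact Hausdorff. Hence $A\hookrightarrow M\oplus C(X,\A)\subset B(H)_{sa}\oplus\ell^\infty(X,\A)$.

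You instead assemble the embedding from factor representations indexed by pure states, sort them by the dichotomy ``JW-factor or $\A$'', and get isometry from Krein--Milman. The paper's deduction is one line only because the JBW structure theorem already packages two things: the classification of all factors (types II and III included), and the identification of the purely exceptional summand as $C(X,\A)$. In return it yields a canonical decomposition and the sharper target $C(X,\A)$.

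Your route is the classical Gelfand--Naimark pattern, and it needs less than you suggest. For a pure state $\rho$, the support of its normal extension is a minimal projection, so $c(\rho)A^{**}$ is a type I factor. Consequently:
\begin{itemize}
\item the halving arguments for type II/III factors, which you list as the main obstacle, never enter;
\item you only need the classification of type I JBW-factors (spin factors, $B(H)_{sa}$ over $\R$, $\C$ or $\H$, and $\A$), and there the JW property of the non-exceptional cases is explicit.
\end{itemize}
For a general factor, by contrast, your dichotomy step hides the implication ``associative coordinate algebra $\Rightarrow$ JW''. That implication is a genuine part of the cited theorem. It cannot be obtained from speciality via \Cref{JCThm} without circularity, since that equivalence rests on the same structure theory. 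The price of your route is a non-canonical embedding into $\ell^\infty(I,\A)$.

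One small correction: for an arbitrary state $\rho$, $c(\rho)A^{**}$ need not be a factor. So index by pure states from the outset rather than ``for each state''.
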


\Cref{GNJB} readily yields that each purely exceptional JB-algebra is isomorphic to a JB-subalgebra of $\ell^{\infty}(I, \A)$ for a suitable index set $I$. Thus, the Albert algebra may be said to be the progenitor of all purely exceptional JB-algebras. The following theorem, which combines \cite[Lemma 9.4 and Theorem 9.5]{ASS78}, is a consequence of this fact.
%that each JBW-algebra is isomorphic to $M \oplus C(K; \A)$ for some JW-algebra $M$ and compact Hausdorff space $K$. Here $C(K; \A)$ is the space of continuous functions of $K$ into $\A$ given the supremum norm and the pointwise Jordan product. By embedding a JB-algebra into its bidual, it follows that each JB-algebra is isomorphic to a closed Jordan subalgebra of $C(K; \A) \oplus B(H)_{sa}$ for some compact Hausdorff space $K$ and complex Hilbert space $H$. 

\begin{theorem}\label{JCThm} Let $A$ be a JB-algebra. Then the following are equivalent:
\begin{enumerate}[label={\upshape(\arabic*)}]
\item $A$ is a JC-algebra;
\item $A$ is a special Jordan algebra;
\item $A$ is an identity-special Jordan algebra;
\item $A$ satisfies Glennie's identity $G_8$ (or $G_9$);
\item there does not exists a surjective Jordan homomorphism $\pi\colon A \to \A$ onto the Albert algebra;
\end{enumerate}
\end{theorem}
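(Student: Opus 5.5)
The proof runs around the cycle of implications $(1)\Rightarrow(2)\Rightarrow(3)\Rightarrow(4)\Rightarrow(5)\Rightarrow(1)$. The step $(1)\Rightarrow(2)$ is immediate, since a JC-algebra sits inside the associative algebra $B(H)$ with the Jordan product \eqref{Jprod}. For $(2)\Rightarrow(3)$, take a special $A\subset R$ with $R$ associative. Every identity satisfied by all special Jordan algebras holds in the special algebra generated by $A$ inside $R$, and hence in the subalgebra $A$ itself. The step $(3)\Rightarrow(4)$ holds because Glennie's identities $G_8$ and $G_9$ are satisfied by $\FSJ([3])$, and therefore by every special Jordan algebra. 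To see this, compute in $\R\{X,Y,Z\}$, or simply cite \cite{Glennie}.

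For $(4)\Rightarrow(5)$, suppose $\pi\colon A\to\A$ is a surjective Jordan homomorphism. Identities are preserved under homomorphic images: given $a,b,c\in\A$, choose preimages in $A$ and apply $\pi$ to $H_8(x,y,z)-H_8(y,x,z)=0$. It follows that $\A$ would satisfy $G_8$ (resp. $G_9$), contradicting Glennie's result that $\A$ does not. In the unital bookkeeping one should note that $\pi$ extends to the unitizations, which is harmless since $F$ has no constant term.

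The substantive step is $(5)\Rightarrow(1)$. Here I would use \Cref{GNJB} together with the decomposition into a JC-quotient and a purely exceptional ideal \cite[Thm. 4.19]{AS03}. Let $J$ be the purely exceptional ideal with $A/J$ a JC-algebra. If $J\neq 0$, \Cref{GNJB} embeds $J$ into $\ell^\infty(I,\A)$, so some coordinate projection gives a nonzero homomorphism $J\to\A$. We must then upgrade this to a surjective homomorphism $A\to\A$. The plan is to follow \cite[Lemma 9.4]{ASS78}: pass to the bidual $A^{**}$, which is a JBW-algebra. Its exceptional summand has the form $C(K,\A)$, and $A$ is weak$^*$ dense in $A^{**}$. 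Evaluating at a point $k\in K$ gives a homomorphism $\pi_k\colon A\to\A$. Its image is a JB-subalgebra of $\A$ that is not special, because otherwise $A$ would map into a JC-algebra at every point and $J$ would vanish.

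The main obstacle is to show that some $\pi_k$ is actually onto $\A$, that is, that a non-special JB-subalgebra of the Albert algebra arising as the image of a point evaluation must be all of $\A$. I would try to show this through the weak$^*$ density of $A$ in $C(K,\A)$ combined with the simplicity of $\A$, which makes the factor representation at an extreme point surjective. Failing that, I would simply invoke \cite[Thm. 9.5]{ASS78}, which states exactly that a JB-algebra is a JC-algebra if and only if it admits no surjection onto $\A$.
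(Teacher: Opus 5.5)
Your proposal is correct and follows the paper's proof. The implications $(1)\Rightarrow(2)\Rightarrow(3)\Rightarrow(4)\Rightarrow(5)$ are the routine ones, with $(4)\Rightarrow(5)$ holding because $\A$ fails $G_8$ and $G_9$, and $(5)\Rightarrow(1)$ rests on a citation of Alfsen--Shultz--St{\o}rmer, just as the paper cites \cite[Cor. 4.20]{AS03}. Your ``main obstacle'' also dissolves: a proper JB-subalgebra of $\A$ has dimension below $27$, so it has no summand isomorphic to $\A$ and is a JC-algebra. Hence if no point evaluation $\pi_k$ were onto, $A$ would embed in $M$ plus the $\ell^\infty$-sum of the JC-algebras $\pi_k(A)$, which is a JC-algebra; note that this gives \emph{some} $\pi_k$ with non-special image, not every one as your wording suggests.
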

\begin{proof}
(1) $\implies$ (2) $\implies$ (3) $\implies$ (4) is immediate, while (4) $\implies$ (5) holds since $\A$ satisfies neither $G_8$ nor $G_9$. The remaining implication (5) $\implies$ (1) is proved in \cite[Cor. 4.20]{AS03}.
\end{proof}

Notwithstanding the equivalence of (1) and (2) in the above theorem, the class of JC-algebras is strictly smaller than the class of special real Jordan algebras. Indeed, the universal property of the free special Jordan algebra on a nonempty $S$ implies there does not exist a norm that makes $\FSJ(S)$ into a JB-algebra, because Jordan homomorphism between JB-algebras are automatically contractive \cite[Prop. 1.35]{AS03}. However, it can be embedded as a non-closed Jordan subalgebra of a JC-algebra. This has as a consequence that the class of JC-algebras governs which identities hold in general special real Jordan algebras.

\begin{proposition}\label{sid}
Let $G$ be a real Jordan polynomial which is satisfied by all JC-algebras. Then $G$ is satisfied by each (identity-)special real Jordan algebra $J$.
\end{proposition}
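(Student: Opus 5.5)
Since every identity-special Jordan algebra satisfies every identity satisfied by all special Jordan algebras, it suffices to show that $G$ is satisfied by all special real Jordan algebras. Since a unital Jordan algebra satisfies $G$ if and only if its unitization does, and every special algebra embeds in a unital associative algebra, it suffices to show that $G$ holds in the free unital special Jordan algebra $\FSJ([n])$. Here $n$ is the number of variables of $G$. Indeed, given a special $J$ and elements $x_1,\dots,x_n \in J$, the universal property gives a unital Jordan homomorphism $\psi\colon \FSJ([n]) \to \tilde J$ sending $X_k \mapsto x_k$. Any $n$-tuple in $\FSJ([n])$ maps to $G$ evaluated at that tuple, so $G(x_1,\dots,x_n) = \psi(G(X_1,\dots,X_n)) = 0$ once $G$ vanishes on all tuples of $\FSJ([n])$.

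So the plan is to embed $\FSJ([n])$, as a (non-closed) Jordan subalgebra, injectively into a JC-algebra $B$. Any tuple in $\FSJ([n])$ then maps to a tuple in $B$, where $G$ vanishes, and injectivity gives vanishing in $\FSJ([n])$. Since $\FSJ([n]) \subset \R\{X_1,\dots,X_n\}$ with the symmetrized product, it suffices to find a family of unital $*$-representations separating points of the free associative algebra. For instance, take all tuples of self-adjoint matrices $(a_1,\dots,a_n) \in (M_N(\C)_{sa})^n$ with $\lVert a_k\rVert \le 1$, over all $N$. Each tuple defines a unital algebra homomorphism $\R\{X_1,\dots,X_n\} \to M_N(\C)$, which restricts to a unital Jordan homomorphism $\FSJ([n]) \to M_N(\C)_{sa}$, because Jordan polynomials in self-adjoint elements are self-adjoint. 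Take the $\ell^\infty$ direct sum over a countable dense set of such tuples, or over all of them. This is the self-adjoint part of the C*-algebra $\prod M_N(\C)$, hence a JC-algebra $B$ (representable on a direct sum Hilbert space), and we obtain a Jordan homomorphism $\Phi\colon \FSJ([n]) \to B$.

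The main step is injectivity of $\Phi$. Equivalently, a nonzero noncommutative real polynomial $p$ cannot vanish on all tuples of contractive self-adjoint complex matrices of all sizes. I would prove this by the standard argument that there are no polynomial identities holding in $M_N$ for all $N$. For $p$ of degree $d$, take $N > d$ large and use a generic-matrix argument: a polynomial vanishing on all $n$-tuples of Hermitian $N\times N$ matrices vanishes on all complex matrices. This holds because the Hermitian matrices form a real form of $M_N(\C)$ that is Zariski dense, and $p$ has real coefficients, so its polynomial entries vanish on $M_N(\C)^n$. Then $p$ is a polynomial identity of $M_N(\C)$ of degree $< 2N$, contradicting the Amitsur--Levitzki minimality (or directly, evaluating on matrix units along a path $e_{12},e_{23},\dots$ picks out a single monomial). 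Scaling removes the contractivity restriction, since $p$ decomposes into homogeneous components and these can be separated by scaling as in the proof of \Cref{tauLem}.

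The obstacle I expect is only this nonvanishing step, in particular the passage from Hermitian tuples to arbitrary tuples. If the Zariski-density argument feels heavy, an alternative is to take $B$ to be the self-adjoint part of the C*-algebra of a suitable faithful representation of the free $*$-algebra on self-adjoint generators. For example, use the regular representation of a free group after writing $X_k = \tfrac12(u_k+u_k^*)$ for free unitaries $u_k$. There injectivity follows from freeness: the top-degree words in the $u_k$ are reduced and hence linearly independent in $C^*_r(F_n)$.
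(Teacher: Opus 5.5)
Your proposal is correct, but it takes a genuinely different route from the paper.

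\textbf{The paper's route.} The paper realizes $\tilde J$ as a quotient of $\FSJ_\R(S)$ for an arbitrary set $S$. It then embeds $\R\{S\}$ into $B(H)$ by one explicit representation. Here $H$ is the Hilbert completion of $\R\{S\}$ in which the monomials are orthonormal, and $X_s \mapsto \ell_s+\ell_s^*$ (creation plus annihilation). Injectivity is a one-line degree count: each $\ell_s^*$ lowers the degree, so for $G$ of degree $n$ the degree-$n$ component of $\phi(G)(1)$ is $G_n\neq 0$.

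\textbf{Your route.} You use the definition of identity-special together with the generic tuple to reduce to the finitely generated algebra $\FSJ([n])$. This avoids the quotient characterization. You then separate the points of $\R\{X_1,\dots,X_n\}$ by the whole family of contractive Hermitian matrix tuples. Contractivity is exactly what makes each $\Phi(p)$ bounded in the $\ell^\infty$ sum.

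This costs you two extra inputs. The first is the passage from Hermitian to arbitrary tuples. It works because the Hermitian matrices form a real form of $M_N(\C)$ and the entries of $p(A_1,\dots,A_n)$ are holomorphic polynomials in the matrix entries; the real coefficients of $p$ play no role there. The second is a lower bound on degrees of polynomial identities of $M_N(\C)$. Your direct matrix-unit version is valid even for non-multilinear $p$. Take $A_k=\sum_{j:\,i_j=k}E_{j,j+1}$ in $M_{d+1}(\C)$. Then the $(1,d+1)$ entry of $p(A)$ is exactly the coefficient of $X_{i_1}\cdots X_{i_d}$, and every monomial of another degree contributes $0$ there.

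\textbf{What each buys.} The paper's argument needs no PI theory and handles arbitrary $S$ at once. Yours proves more. Since the associative degree of the image of $G$ in $\R\{X_1,\dots,X_n\}$ is at most the Jordan degree $d$ of $G$, a Jordan polynomial of degree $d$ that holds in the single finite-dimensional JC-algebra $M_{d+1}(\C)_{sa}$ already holds in every (identity-)special Jordan algebra.

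Your free-group alternative, $X_k\mapsto \tfrac12(u_k+u_k^*)$, is essentially the paper's method. It replaces the creation-plus-annihilation operators by real parts of free unitaries, and the top-degree count on the vector $1$ by uniqueness of reduced words.
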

\begin{proof}
Let $J$ be an identity-special real Jordan algebra. Then the unitization $\tilde{J}$ is a homomorphic image of the free special Jordan algebra $\FSJ_{\R}(S)$ on some set $S$. Hence we only need to show that $\FSJ_{\R}(S)$ satisfies $G$. This will be proved by exhibiting an injective Jordan homomorphism of $\FSJ_{\R}(S)$ into $B(H)_{sa}$ for a suitable real Hilbert space $H$.\\
\ind By construction, $\FSJ_\R(S)$ is the real Jordan subalgebra of $\R \{ S \}$ generated by the set $S$. It suffices to find an injective $\R$-algebra homomorphism $\phi\colon\R\{S\} \to B(H)$ such that $\phi(s) \in B(H)_{sa}$ for each $s\in S$, for then $\phi$ restricts to an injective real Jordan homomorphism from $\FSJ_\R(S)$ into $B(H)_{sa}$.\\
\ind We endow $\R\{S\}$ with the real inner product for which the set of mononomials in $S$ is an orthonormal basis, and let $H$ be the Hilbert space completion of $\R\{S\}$. For every $F \in \R\{S\}$, left multiplication by $F$ on $\R\{S\}$ extends to a bounded operator $\ell_F \in B(H)$. The universal property of the free associative $\R$-algebra gives a unique $\R$-algebra homomorphism $\phi\colon \R \{ S\} \to B(H)$ such that $\phi(s) = \ell_s + \ell_s^*$ for every $s\in S$. The proof is finished by showing that $\phi$ is injective.\\
\ind Note that $H$ inherits from $\R\{S\}$ a grading given by the degree of words. For $s\in S$, the operator $\ell_s$ has degree $1$, hence its adjoint $\ell_s^*$ has degree $-1$. Let $G \in \R\{ S\}$ be non-zero, and let $n\ge 1$ be its degree. Let $G_n$ be the homogeneous component of $G$ of degree $n$. The vector $\ell_G(1)$ and $\phi(G)(1)$ in $H$ have the same homogeneous component of degree $n$, namely $G_n \neq 0$. Therefore $\phi(G) \neq 0$ and $\phi$ is injective.
% Let $X$ be the $\R$-basis of $\R\{S\}$ consisting of all words in $S$. Let $H = \ell^2(X)$ be a Hilbert space having an orthonormal basis $\{e_x: x \in X\}$ in bijection with $X$. Note that $H$ has a natural grading given by the degree of words.
% For every $x\in S$ we define the operator $l_x \in B(H)$ by $\ell_x(e_y) = e_{sy}$, where $xy$ denotes the concatenation of the words $x$ and $y$. For $s\in S$, the operator $\ell_s$ has degree $1$, hence its adjoint $\ell_s^*$ has degree $-1$.
% The universal property of the free associative $\R$-algebra gives a unique ring homomorphism $\pi\colon \R \{ S\} \to B(H)$ such that $\pi(s) = \ell_s + \ell_s^*$ for every $s\in S$. The proof is finished by showing that $\pi$ is injective.\\
% \ind Let $G \in \R\{ X\}$ be non-zero, and let $n\ge 0$ be its degree. Let and let $G_n$ be the homogeneous component of $G$ of degree $n$. Then the homogeneous component of $\pi(G)(e_{1}) \in H$ of degree $n$ is given by $G_n \neq 0$, hence $\pi(G) \neq 0$. This shows that $\pi$ is injective, and concludes the proof.
\end{proof}

\begin{remark}
Each integral Jordan polynomial $F$ which is satisfied by all JC-algebras, is satisfied by each special Jordan algebra $J$ over every commutative ring $R$. (If $2$ is not invertible in $R$, then $J$ is understood to be a quadratic Jordan algebra over $R$ in the sense of \cite[Ch. 1, Def. 3]{Jac69}). Indeed, by restriction of scalars $J$ becomes a special Jordan algebra over $\Z$. Therefore, $J$ is a homomorphic image of $\FSJ_\Z(S)$. Since $\FSJ_\Z(S)$ is a Jordan subalgebra of $\FSJ_\R(S)$, it also embeds in $B(H)_{sa}$. Then $F$ is fulfilled by $B(H)_{sa}$, hence by $\FSJ_\Z(S)$, hence by $J$. 
\end{remark}

\begin{example} One may wonder whether it is possible to embed each special real Jordan algebra in a JC-algebra. The identity-special but not special real Jordan algebra $J := \FSJ([3])/K$ from \Cref{isnots} provides a counterexample. In fact, if $J$ is a Jordan subalgebra of a JB-algebra $A$, then the norm closure $\overline{J}$ of $J$ in $A$ is a JB-subalgebra of $A$. Since $J$ is identity-special, by continuity of the product in $A$, also $\overline{J}$ is identity-special. But then $\overline{J}$ is special by \Cref{JCThm}, forcing $J$ to be special which it is not.
\end{example}

\subsection{JBW-algebras}
In this section, we discuss the Jordan analogue of von Neumann algebras (or more accurately, W*-algebras), called JBW-algebras.\\
\ind Let $M$ be a unital JB-algebra. A linear functional $\phi\colon M \to \R$ on $M$ is called a \emph{state} if $\phi$ has norm $1$ and is positive, i.e.\ $\phi(M_+) \subset \R_+$, equivalently, if $\phi \in M^*$ is bounded with $\lVert \phi \rVert = \phi(1) = 1$. We say that $M$ is \emph{monotone complete} if each upper bounded increasing net $(x_i)_i$ in $M$ has a supremum $x$ in $M$. A state $\phi$ on $M$ is called \emph{normal} if, for each increasing net $(x_i)_i$ in $M$ with supremum $x$ as above, one has $\phi(x) = \lim_i \phi(x_i)$. 

\begin{definition}
A unital JB-algebra $M$ is called a \emph{JBW}\emph{-algebra} if $M$ is monotone complete and the normal states on $M$ separate points.
\end{definition}

Paralleling \Cref{JBEgs}, we have the following examples of JBW-algebras.

\begin{examples}\label{JBWEgs}
(1) An associative JB-algebra $C_0(X)$ is a JBW-algebra if and only if $X$ is a hyperstonean topological space. \\
(2) The self-adjoint part of a von Neumann algebra $A$ is a JBW-algebra $A_{sa}$.\\
(3) Each finite-dimensional or reflexive JB-algebra is a JBW-algebra. In particular, the Albert algebra $\A$ is a JBW-algebra. 
\end{examples}

 %The convex set $K \subset M_*$ of all normal states on $M$ is called the \emph{normal state space} of $M$.
%The $\sigma$\emph{-strong topology} on $M$ is the topology defined by the family of all seminorms $a \mapsto \phi(a^2)^{1/2}$ for $\phi \in K$. Norm convergence implies convergence in the $\sigma$-strong topology. The multiplication in $M$ is jointly $\sigma$-strongly continuous on bounded subsets of $M$ \cite[Prop. 2.4]{AS03}.\\
A JB-algebra $M$ is a JBW-algebra if and only if it is a Banach dual space. In this case, the linear span $M_*$ of the normal states on $M$ is the unique predual of $M$ (up to isometric isomorphism). The weak topology $\sigma(M, M_*)$ on $M$ determined by $M_*$ is called the \emph{$\sigma$-weak topology} on $M$.\\
\ind If $A$ is a JB-algebra, then its bidual $A^{**}$ is a JBW-algebra for a unique product extending the product on $A$ \cite[Prop. 2.4 and Cor. 2.50]{AS03}. Kaplansky's density theorem for JB-algebras \cite[Prop. 2.69]{AS03} asserts that the unit ball of $A$ is $\sigma$-strongly dense in the unit ball of $A^{**}$. \\
\ind A \emph{JW-algebra} is a JB-algebra which is isometrically isomorphic to a $\sigma$-strongly closed Jordan subalgebra of $B(H)_{sa}$. The bidual of a JC-algebra is a JW-algebra \cite[Prop. 2.77]{AS03}. If a JC-algebra is also a JBW-algebra, then it is in fact a JW-algebra \cite[Cor. 2.78]{AS03}. Alfsen--Shultz--Størmer have demonstrated in \cite{ASS78} that each JBW-algebra is isomorphic to $M \oplus C(X, \A)$ for some JW-algebra $M$ and compact Hausdorff space $X$. Since each JB-algebra $A$ is a JB-subalgebra of its bidual JBW-algebra $A^{**}$, this yields the Gelfand--Naimark theorem for JB-algebras which we stated in \Cref{GNJB}.

\section{A generalization of the Shirshov--Cohn theorem}\label{SCSec}

We will now prove the first main result of this article, which is the following generalization of the Shirshov--Cohn theorem for JB-algebras.

\begin{theorem}\label{ShirCohn2}
Let $A$ be a JB-algebra containing two associative Jordan subalgebras $B$ and $C$ such that $A$ is generated as a JB-algebra by $B\cup C$. Then $A$ is a JC-algebra.
\end{theorem}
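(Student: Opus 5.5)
By \Cref{JCThm} it suffices to show that there is no surjective Jordan homomorphism $\pi\colon A\to\A$ onto the Albert algebra. So we argue by contradiction and suppose such a $\pi$ exists. Since $\A$ is finite-dimensional, the images $\pi(B)$ and $\pi(C)$ are associative Jordan subalgebras of $\A$. Because $\pi$ is continuous and $B\cup C$ generates $A$ as a JB-algebra, the Jordan subalgebra of $\A$ generated by $\pi(B)\cup\pi(C)$ is dense, hence all of $\A$ (finite dimension). After adjoining $1_{\A}$ if necessary, we are reduced to a purely finite-dimensional statement. It suffices to show that $\A$ cannot be generated by two associative subalgebras $B',C'$, which we may take to be maximal associative, i.e.\ unital.

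Each associative subalgebra of $\A$ is a finite-dimensional associative formally real Jordan algebra, hence spanned by orthogonal idempotents. These lie in a Jordan frame of $\A$, so $B'$ is contained in $\operatorname{span}\{e_1,e_2,e_3\}$ for some Jordan frame $(e_1,e_2,e_3)$, and similarly $C'\subset\operatorname{span}\{f_1,f_2,f_3\}$. It is therefore enough to show that two Jordan frames never generate $\A$. Here I use the fact that $\operatorname{Aut}(\A)=F_4$ acts transitively on Jordan frames, so we may take $e_i=E_{ii}$ to be the diagonal matrix units. Then $\{e_i\}$ together with $\{f_j\}$ generate the same subalgebra as $\{E_{ii}\}$ together with the single element $y=\sum_j \lambda_j f_j$ with distinct $\lambda_j$: indeed each $f_j$ is a polynomial in $y$, and each $E_{ii}$ is a polynomial in $x=\sum_i i E_{ii}$. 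Thus the subalgebra generated by $B'\cup C'$ equals the subalgebra generated by the two elements $x$ and $y$ (and $1$).

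Now the Shirshov--Cohn theorem applies: the Jordan subalgebra of $\A$ generated by $x,y,1$ is a homomorphic image of $\FJ([2])\cong\FSJ([2])$, and is special. Hence it is not all of $\A$, since $\A$ is exceptional (Albert, or Glennie's identity), which is the required contradiction.

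The main obstacle is the middle step, which has two parts. The first is to justify that every associative subalgebra of $\A$ sits inside the span of a single Jordan frame; this follows by spectral decomposition, since the minimal idempotents of $B'$ can be refined into a frame using the Peirce decomposition. The second is to make sure the reduction is legitimate when $B$ or $C$ is infinite-dimensional and nonunital. For this it is enough that $\pi(B)$ is a finite-dimensional associative subalgebra, which is automatic. If one prefers to avoid invoking the transitivity of $F_4$, the frame reduction is not even needed: $\pi(B)$ is itself generated by one element (a linear combination of its finitely many orthogonal idempotents with distinct nonzero coefficients), and likewise $\pi(C)$. So in fact the argument only needs the spectral theorem in finite dimensions followed by the Shirshov--Cohn theorem.
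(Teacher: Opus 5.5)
Your proof is correct and essentially the same as the paper's: pass to a surjection $\pi\colon A\to\A$, use continuity and finite-dimensionality to see that $\A$ is algebraically generated by $\pi(B)\cup\pi(C)$, note that each of these is $\cong\R^k$ and hence singly generated, and contradict Shirshov--Cohn. The Jordan-frame and $F_4$-transitivity detour is superfluous, as you observe yourself, and $\pi(B),\pi(C)$ are associative because they are homomorphic images of associative algebras, not because $\A$ is finite-dimensional.
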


\begin{proof}
By assumption, the Jordan algebra $D$ generated by $B$ and $C$ is dense in $A$. If $A$ is not isomorphic to a JC-algebra, then there exists a surjective homomorphism $\pi\colon A \to \A$ onto the Albert algebra \cite[Cor. 4.20]{AS03}. Since $\pi$ is norm continuous \cite[Prop. 1.35]{AS03}, it follows that $\pi(D)$ is dense in $\pi(A)$. Because $H_3(\O)$ is finite-dimensional, each of its subspaces is norm closed, so $\pi(D) = \pi(A) = \A$. Therefore $\A$ is algebraically generated by $\pi(B)$ and $\pi(C)$.\\
\ind By the assumption that $B$ and $C$ are associative Jordan algebras, their homomorphic images $\pi(B)$ and $\pi(C)$ are also associative. Since a finite-dimensional associative JB-algebra is isomorphic to $\R^k$, we find that $\pi(B) \cong \R^m$ and $\pi(C) \cong \R^n$ for some integers $m,n\ge 1$ (in fact, comparing ranks gives $m,n\le 3$). Now $\R^k$ can be generated as a Jordan algebra by a single element $(\lambda_1, \ldots, \lambda_k)$ by choosing $\lambda_1,\ldots,\lambda_k$ pairwise distinct. Hence each of $\pi(B)$ and $\pi(C)$ can be generated by a single element. This implies that the Albert $\A$ is generated by two elements, which is absurd by the algebraic Shirshov--Cohn theorem. Therefore $A$ is JC-algebra, proving the theorem.
\end{proof}

\begin{corollary}\label{JWThm} Let $M$ be a JBW-algebra containing two associative Jordan subalgebras $B$ and $C$ such that $M$ is generated as a JBW-algebra by $B\cup C$. Then $M$ is a JW-algebra.
\end{corollary}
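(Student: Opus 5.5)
The plan is to mimic the proof of \Cref{ShirCohn2}, replacing norm density by $\sigma$-weak density and working through a characterisation of JW-algebras among JBW-algebras. Recall from the discussion of JBW-algebras that $M \cong N \oplus C(X,\A)$ with $N$ a JW-algebra and $X$ compact Hausdorff, and that a JBW-algebra which is a JC-algebra is a JW-algebra \cite[Cor. 2.78]{AS03}. It therefore suffices to show that $M$ is a JC-algebra. By \Cref{JCThm} this reduces to showing that there is no surjective Jordan homomorphism $M \to \A$; equivalently, that the summand $C(X,\A)$ vanishes.

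Suppose $X \neq \emptyset$, pick $x \in X$, and let $\pi\colon M \to \A$ be the composition of the projection $M \to C(X,\A)$ with evaluation at $x$. This is a surjective Jordan homomorphism. The crux is to show that $\pi$ is $\sigma$-weakly continuous. For the projection onto a direct summand this is standard, since it is multiplication by a central projection. For evaluation at a point of $X$ it is not automatic when $X$ is only compact Hausdorff. I would therefore first arrange, via the structure theory in \cite{ASS78}/\cite{AS03}, that $C(X,\A) \cong C(X,\R)\otimes \A$ with $C(X,\R)$ a commutative von Neumann algebra, i.e.\ $X$ hyperstonean. Then I would use normal pure states: choose a normal character $\omega$ of the centre $C(X,\R)$, or, if none exists, a minimal central projection argument after cutting down. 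Composing $\omega\otimes \id$ gives a normal surjection onto $\A$.

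A cleaner alternative avoids point evaluations altogether. Let $D$ be the Jordan algebra generated by $B\cup C$, so $D$ is $\sigma$-weakly dense in $M$. If $M$ is not a JW-algebra, then $M$ has a nonzero purely exceptional part $zM \cong C(X,\A)$, where $z$ is a central projection; multiplication by $z$ is normal. I would aim to show that $zM$ satisfies Glennie's identity $G_8$, which contradicts $zM \neq 0$, since each fibre is a copy of $\A$. To do this, \Cref{tauLem} with $\tau$ the $\sigma$-weak topology (under which multiplication is separately continuous) shows that it suffices for $zD$ to satisfy $G_8$.

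I would obtain this from the norm closure $\overline{zD}$, a JB-algebra generated by the associative subalgebras $zB$ and $zC$, which is therefore a JC-algebra by \Cref{ShirCohn2}. Being special, it satisfies $G_8$ by \Cref{JCThm}. Then \Cref{tauLem} gives that the $\sigma$-weak closure $zM$ satisfies $G_8$.

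To finish, evaluation at any point $x\in X$ is a surjective homomorphism $zM=C(X,\A)\to\A$ for which no continuity is needed: identities pass to homomorphic images. Hence $\A$ satisfies $G_8$, which is absurd. Thus $z=0$, and $M$ is a JW-algebra. The main obstacle is correctly invoking the decomposition $M\cong N\oplus C(X,\A)$ with $N$ a JW-algebra and the central summand, together with the separate $\sigma$-weak continuity of multiplication; once these are available, the identity-transfer argument is routine.
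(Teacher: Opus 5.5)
Your second (``cleaner'') argument is correct and is essentially the paper's proof: the paper applies \Cref{ShirCohn2} to the norm closure of $D$, transfers $G_8$ to all of $M$ by \Cref{tauLem} for the $\sigma$-weak topology, and concludes directly via \Cref{JCThm} and \cite[Cor. 2.78]{AS03}, so your cut-down to the exceptional summand $zM$ followed by point evaluation is a harmless but unnecessary detour. You were right to abandon the first route: when the centre of $C(X,\A)$ is diffuse (for example $X$ hyperstonean without isolated points) there is neither a normal character nor a minimal central projection, so a $\sigma$-weakly continuous surjection $M\to\A$ need not exist.
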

\begin{proof}
Let $D$ be the JB-subalgebra of $M$ generated by $B \cup C$, which is $\sigma$-weakly dense in $M$ by assumption. Then $D$ is a JC-algebra by \Cref{ShirCohn2}, hence $D$ satisfies Glennie's identity $G_8$. Since the multiplication in $M$ is separately $\sigma$-weakly continuous, by \Cref{tauLem} its $\sigma$-weak closure $M$ satisfies Glennie's identity $G_8$. But then $M$ is a JC-algebra by \Cref{JCThm}, hence a JW-algebra. 
\end{proof}

%It suffices to show that $M$ satisfies Glennnie's identity, i.e.\ for all $x,y,z\in M$ one has $$H_8(x,y,z)=H_8(y,x,z).$$ Since $D$ is special, this identity is fulfilled if $x$, $y$ and $z$ belong to $D$. According to Kaplansky's density theorem \cite[Prop. 2.69]{AS03} the unit ball of $D$ is $\sigma$-strongly dense in the unit ball of $M$. Because multiplication is jointly $\sigma$-strongly continuous on bounded subsets of $M$ by \cite[Prop. 2.4]{AS03}, it follows that $M$ satisfies Glennie's identity.
%\ind (2) The proof of (2) is the same as (1), except that this time we need to find a surjective homomorphism $\pi\colon M \to H_3(\O)$ which is $\sigma$-weakly continuous, rather than merely norm continuous. According to \cite[Thm. 6.4.1, Rmk. 6.4.3 and Thm. 7.2.7]{HOSt84} there exists a decomposition $M = M_{sp} \oplus M_{exc}$, where $M_{sp}$ is a JW-algebra and $M_{exc} \cong C(X; H_3(\O))$ for some compact Hausdorff space $X$. If there would exist $x\in X$ then $\pi_x \colon M \to M_{exc} \stackrel{\ev_x}{\to} H_3(\Oc)$ is a surjective (WRONG: it is not $\sigma$-weakly continuous) homomorphism. Therefore $H_3(\O)=\pi(M)=\pi(D^{\sigma-wk})\subset \pi(D)^{\sigma-wk} = \pi(D),$ hence $\pi(D) = H_3(\Oc)$. By the same argument as in (1), this is impossible by the Shirshov--Cohn theorem. Therefore $X = \emptyset$ and $M = M_{sp}$ is a JW-algebra.

\begin{remark}
One can also deduce \Cref{JWThm} from \Cref{ShirCohn2} in a similar way as to how in \cite[Lemma 2.3]{HaaHO} part (b) is deduced from part (a). The argument is as follows. If $D$ is a $\sigma$-weakly dense JC-subalgebra of a JBW-algebra $M$, then by \cite[Thm. 2.65]{AS03} there is a normal homomorphism $\pi \colon D^{**} \to M$ extending the inclusion map of $D$ into $M$. Since $D$ is $\sigma$-weakly dense in $M$, it follows that $\pi$ is surjective. So there is a central idempotent $c\in D^{**}$ such that $\pi$ restricts to an isomorphism of $cD^{**}$ onto $M$. Since $D$ is a JC-algebra, $D^{**}$ is a JW-algebra, hence $M \cong cD^{**}$ is a JW-algebra.
\end{remark}

% \begin{remark}

% The proof of the two-element version of the Shirshov--Cohn theorem given in \cite[Thm. 7.2.5]{HOSt84} for the JB-algebra case does not seem to generalize to the JBW-algebras case because a non-special JBW-algebra $M$ does not necessarily admit a $\sigma$-weakly continuous surjective homomorphism onto the Albert algebra $\A$ (for example, $M = C(\Omega; \A)$ where $\Omega$ is a non-empty hyper-Stonean space without isolated points), although it has been cited in the literature for the JBW-algebra case as well.
% %[Iochum, JBW-version used https://projecteuclid-org.leidenuniv.idm.oclc.org/journals/pacific-journal-of-mathematics/volume-122/issue-2/Nonassociative-Lp-spaces/pjm/1102701894.pdf]
% %https://arxiv.org/pdf/1912.01903
% \end{remark}

\section{Macdonald's principle and operator commutativity}\label{McDSec}
Let $J$ be a JB-algebra and let $a,b\in J$. Recall that the multiplication operator defined by $a$ is denoted $T_a\colon J \to J$, $T_a(x) := a \circ x$. One says that $a$ and $b$ \emph{operator commute} in $J$ if $T_a \circ T_b = T_b \circ T_a$ as linear endomorphism of $J$, equivalently, if $a \circ (x \circ b) = (a \circ x ) \circ b$ for all $x\in J$. Note that a priori, operator commutativity of $a$ and $b$ could depend on the ambient JB-algebra $J$. Van de Wetering proved that the formally weaker condition that $a$ and $b$ operator commute in $\JB(a,b)$ implies that $a$ and $b$ operator commute in $J$. Using \Cref{ShirCohn2} we shall prove a generalization of \cite[Theorem]{vdWopCm}, in which we consider a general subset $S\subset J$ rather than $\{a,b\}$.

\begin{theorem}\label{SocThm}
Let $J$ be a JB-algebra and $S$ a nonempty subset of $J$. Assume that all $s,s'\in S$ operator commute in $\JB(s,s')$. Then $\JB(S)$ is an associative JB-algebra whose elements pairwise operator commute in $J$. The analogous assertion holds for JBW-algebras.
\end{theorem}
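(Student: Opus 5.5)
We reduce to the two-element case of van de Wetering and then pass to the closure. First, for $s,s'\in S$, the hypothesis that $s$ and $s'$ operator commute in $\JB(s,s')$ gives, by \cite[Theorem]{vdWopCm}, that $s$ and $s'$ operator commute in $J$. In particular $s\circ(s\circ s')=s'\circ(s\circ s)$ and the like, so the elements of $S$ together with all their Jordan products should span an associative algebra. To see this we use the standard fact, valid in any Jordan algebra and in particular in any JB-algebra, that the set of elements operator commuting with a fixed element is a subalgebra. For JB-algebras one can also argue concretely: if $a$ operator commutes with $b$ and with $c$, then it operator commutes with $b\circ c$. This follows because $T_{b\circ c}$ is expressible via the linearized Jordan identity $[T_{b\circ c},T_a]+[T_{c\circ a},T_b]+[T_{a\circ b},T_c]=0$ together with $[T_a,T_b]=[T_a,T_c]=0$. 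That route is delicate, so the cleaner approach is via the bicommutant below.

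Define $S' := \{x\in J : x \text{ operator commutes with every } s\in S\}$ and $S'' := \{y \in J: y \text{ operator commutes with every } x\in S'\}$. By the first step, $S\subset S'$, hence $S\subset S''\subset S'$, and all elements of $S''$ pairwise operator commute. The key claim is that for any subset $T$, the operator-commutant $T'$ is a norm-closed Jordan subalgebra. Closedness follows from continuity of $x\mapsto T_x$. Closure under products is the main obstacle.

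To handle this, fix $t\in T$ and $x,y\in T'$, and show that $t$ and $x\circ y$ operator commute in $J$. The plan is to use \Cref{ShirCohn2}. The JB-algebra $\JB(t,x,y)$ is not obviously generated by two associative subalgebras. However, $\JB(t,x)$ is associative: $t$ and $x$ operator commute, so the Jordan algebra they generate is spanned by products $t^m\circ x^n$ and is associative. The same holds for $\JB(t,y)$. So $K:=\JB(\JB(t,x)\cup \JB(t,y))$ is a JC-algebra by \Cref{ShirCohn2}. Inside $K\subset B(H)_{sa}$, operator commutation of $t$ with $x$ (in $K$) should imply $tx=xt$. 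Indeed, for self-adjoint operators, $T_tT_x=T_xT_t$ on the algebra they generate yields $[[t,x],z]=0$ for all $z$ there, and in particular $[[t,x],t]=0$, which forces $[t,x]=0$ by a Kleinecke–Shirokov type argument using self-adjointness. Then $t$ commutes with $x$ and with $y$ associatively, hence with $x\circ y$. So $t$ and $x\circ y$ operator commute in $\JB(t,x\circ y)\subset K$. By \cite[Theorem]{vdWopCm} they then operator commute in $J$.

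Applying the claim to $T=S'$ shows $S''$ is a JB-subalgebra containing $S$, so $\JB(S)\subset S''$. Hence the elements of $\JB(S)$ pairwise operator commute in $J$. A JB-algebra whose elements pairwise operator commute is associative, since $(a\circ b)\circ c=a\circ(b\circ c)$ is exactly the statement $T_cT_a b=T_aT_c b$. For the JBW-version, $x\mapsto T_x$ is $\sigma$-weakly continuous in the appropriate sense, by separate $\sigma$-weak continuity of multiplication. So commutants are $\sigma$-weakly closed, and the JBW-algebra generated by $S$ again lies in $S''$.
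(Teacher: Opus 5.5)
\textbf{Verdict: correct, by a different route from the paper's.} The paper never appeals to \cite{vdWopCm}; it recovers that result as the case $S=\{a,b\}$. It proceeds as follows. First, by induction on $|F|$, it shows that $\JB(F)$ is associative for finite $F\subset S$: apply \Cref{ShirCohn2} to $\JB(F\setminus\{y\})\cup\JB(y)$, then \Cref{HOpc} to see that the C*-algebra generated by $F$ is abelian. It then passes to the norm closure of the directed union. Finally, for each $z\in J$, it applies \Cref{ShirCohn2} to $\JB(S)\cup\JB(z)$ and \Cref{HOpc} twice.

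You instead use van de Wetering's theorem as a black box and prove a structural lemma: the operator commutant $T'$ in $J$ of any subset is a norm-closed Jordan subalgebra. This comes from \Cref{ShirCohn2} applied to $\JB(t,x)\cup\JB(t,y)$, plus a Kleinecke--Shirokov argument, which in effect proves the needed direction of \Cref{HOpc}. The sandwich $\JB(S)\subset S''\subset S'$ then finishes. Your route avoids the induction and shows that commutants are closed subalgebras. It also handles the JBW case cleanly via $\sigma$-weak closedness of commutants, which the paper only asserts is analogous. The paper's route is more self-contained: van de Wetering's theorem is a quick consequence of \Cref{ShirCohn2} and \Cref{HOpc} (the paper's last paragraph with $S=\{a,b\}$). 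You could substitute that argument for both of your citations.

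\textbf{Two side claims need correcting, though neither carries weight in the final argument.}

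First, the ``standard fact'' that in any Jordan algebra the operator commutant of an element is a subalgebra is false. Let $U$ be the enveloping algebra of the Heisenberg Lie algebra, with $d=[a,b]$ central. In $U^+$ one has $[T_a,T_b]=\tfrac{1}{4}\mathrm{ad}([a,b])=0$, yet $[T_{b^2},T_a]a=\tfrac{1}{2}d^2\neq 0$. Delete that paragraph; your bicommutant argument proves the JB-specific version independently.

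Second, the same example gives $(a\circ a)\circ b^2\neq a\circ(a\circ b^2)$. So ``the algebra generated by $t,x$ is spanned by $t^m\circ x^n$ and hence associative'' is not a valid justification in general. In a JB-algebra the claim is true, but the proof goes through Shirshov--Cohn. $\JB(t,x)$ is a JC-algebra, and your Kleinecke--Shirokov argument with $z=t$ gives $tx=xt$ in the C*-algebra it generates. That C*-algebra is therefore abelian, so $\JB(t,x)$ is associative; this is exactly the paper's inductive step with $|F|=2$. Alternatively, cite the full form of \cite{vdWopCm}.
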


% \begin{theorem}[van~de~Wetering]\label{vdWc}
% Let $J$ be a JB-algebra and let $a,b\in J$. Then the following statements are equivalent:
% \begin{enumerate}[label={\upshape(\arabic*)}]
% \item $a$ and $b$ operator commute in $JB(a,b)$.
% \item $a$ and $b$ generate an associative JB-algebra of mutually operator commuting elements.
% \end{enumerate}
% \end{theorem}
%Note that implication (2) $\implies$ (1) is trivial. As has been observed in \cite{vdWopCm}, if $A$ is a JC-algebra then implication (2) $\implies$ (1) follows easily from the following lemma.
% \begin{proof}[Proof~of~\Cref{vdWc}]
% Assume (1) holds. The Shirshov--Cohn theorem states that $\JB(a,b)$ is a JC-algebra. Since $a$ and $b$ operator commute, the JC-algebra $JB(a,b)$ is associative according to \Cref{HOpc}.
% Let $x,y \in JB(a,b)$ and let $c\in J$. We need only show that $T_x(T_y(c))=T_y(T_x(c))$. By \Cref{ShirCohn2} the JB-algebra $K = JB(a,b,c)$ generated by $JB(a,b)$ and $JB(c)$ is a JC-algebra. Thus we may consider $K$ embedded as a JC-subalgebra of $A_{sa}$ for some C*-algebra $A$.\\
% \ind We first apply \Cref{HOpc} to the JC-subalgebra $JB(a,b)$ of $A_{sa}$, to infer from our assumption that $a$ and $b$ operator commute in $JB(a,b)$ that they commute in $A$. Therefore, $x,y \in JB(a,b)$ also commute in $A$. Applying \Cref{HOpc} now to the JC-subalgebra $K$ of $A$, it follows that $x$ and $y$ operator commute in $K$. Since $c\in K$, we obtain that $T_x(T_y(c))=T_y(T_x(c))$, as desired.
% \end{proof}
We require the following lemma, asserting that in a JC-algebra $D \subset A_{sa}$, operator commutativity of two elements in $D$ is equivalent to their commuting in the enveloping C*-algebra $A$.

\begin{lemma}[Hanche--Olsen]\label{HOpc}
Let $A$ be a C*-algebra and let $D$ be a JC-subalgebra of $A_{sa}$. Let $x,y \in D$. Then $x$ and $y$ operator commute in $D$ if and only if they commute in $A$, i.e.\
$$T_x \circ T_y = T_y \circ T_x \iff xy = yx.$$
\end{lemma}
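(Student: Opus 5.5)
The backward implication is a direct computation; the forward implication I would prove by an associator identity together with positivity and the C*-identity.

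For ($\Leftarrow$), suppose $xy=yx$ in $A$ and take $z\in D$. Expanding in $A$ gives
$$4\,x\circ(y\circ z) = xyz+xzy+yzx+zyx,\qquad 4\,y\circ(x\circ z)=yxz+yzx+xzy+zxy.$$
The two right-hand sides agree because $xy=yx$. Hence $T_xT_y=T_yT_x$ on $D$.

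For ($\Rightarrow$), assume $T_xT_y=T_yT_x$ on $D$, and put $c:=xy-yx\in A$. Since $c^*=yx-xy=-c$, the element $c$ is skew. The general identity
$$4\big(x\circ(y\circ z)-y\circ(x\circ z)\big)=[[x,y],z]=cz-zc,\qquad z\in D,$$
follows from the expansion above. So the hypothesis says exactly that $c$ commutes with every element of $D$, and in particular with $x$ and $y$. It remains to deduce $c=0$.

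\textbf{Key step.} Since $x$ and $y$ commute with $c$, we get $[x,c]=0$. Write $c=[x,y]$. Using $cx=xc$,
$$c^2 = c(xy-yx) = xcy - cyx.$$
Using $cy=yc$ as well, this becomes $c^2=x(yc)-(yc)x=[x,yc]$. So $c^2$ is a commutator $[x,w]$ with $w=yc$, where $x$ is self-adjoint and $[x,c^2]=0$.

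This is Kleinecke--Shirokov territory. Iterating gives $[x,w^n]$-type relations, and the standard argument yields that $c^2$ is quasinilpotent. Now $c^*c=-c^2$ is self-adjoint and positive, and being also quasinilpotent it has spectrum $\{0\}$. Hence $\|c\|^2=\|c^*c\|=0$, so $c=0$.

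The main obstacle is justifying quasinilpotence without the full Kleinecke--Shirokov theorem. I would first try the direct route: $[x,c]=0$ and $c=[x,y]$ give $c^n=[x,yc^{n-1}]$ for all $n$. The derivation $\delta=[x,\cdot\,]$ satisfies $\delta(c)=0$ and $\delta(y)=c$, which is precisely the Kleinecke setting. It yields $\|c^n\|^{1/n}\to 0$ via the estimate $\|\delta^n(y^n)\|=n!\,\|c^n\|$ together with $\|\delta^n(y^n)\|\le (2\|x\|)^n\|y\|^n$, giving $\|c^n\|\le (2\|x\|\|y\|)^n/n!$.

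As a more elementary alternative, I could pass to a faithful representation on a Hilbert space and use spectral projections of $x$ to compress. Alternatively, since $c$ is skew and commutes with $x$, the element $ic$ is self-adjoint and commutes with $x$. Applying a trace-like argument on the abelian C*-algebra generated by $x$ and $ic$ would then contradict $c=[x,y]$ unless $c=0$.
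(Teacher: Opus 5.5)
Your proof is correct. It differs from the paper only in that the paper gives no argument of its own and simply cites \cite[Lemma 5.1]{HaOltp}.

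Your approach has two parts. The identity $4\,[T_x,T_y](z)=[[x,y],z]$ gives ($\Leftarrow$) directly, and it turns the hypothesis into $[c,z]=0$ for all $z\in D$, where $c=[x,y]$. For ($\Rightarrow$) you only use the case $z=x$, i.e.\ $[x,c]=0$. Your estimate $n!\,\lVert c^n\rVert=\lVert\delta^n(y^n)\rVert\le(2\lVert x\rVert\lVert y\rVert)^n$ is exactly the Kleinecke--Shirokov proof. So the quasinilpotence of $c$ is fully justified, and the step you call the ``main obstacle'' is already handled.

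The write-up can be streamlined in two places:
\begin{enumerate}
\item Since $c$ is skew-adjoint, hence normal, $\lVert c\rVert=r(c)=0$ follows at once. The detour through $c^2=[x,yc]$ is unnecessary.
\item The closing ``more elementary alternatives'' should be dropped. A general C*-algebra has no trace, and $y$ does not lie in the abelian C*-algebra generated by $x$ and $ic$, so the trace-like argument does not work as stated. Nothing in your proof depends on it.
\end{enumerate}

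As for what each approach buys: the citation keeps the paper short. Your route makes the lemma self-contained and also yields a sharper statement. For $x,y$ in a JC-subalgebra of $A_{sa}$, the single relation $x\circ(x\circ y)=x^2\circ y$ already forces $xy=yx$. In particular, operator commutation in $\JB(x,y)$, in $D$, and in $A$ all coincide for JC-algebras, which is exactly how the lemma is used in the proof of \Cref{SocThm}.
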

\begin{proof}
See \cite[Lemma 5.1]{HaOltp}.
\end{proof}

We will prove \Cref{SocThm} by using \Cref{ShirCohn2} to reduce to the case of a JC-algebra.

\begin{proof}[Proof~of~\Cref{SocThm}]
We prove that each nonempty finite subset $F$ of $S$ generates an associative JB-algebra by induction on the cardinality $|F|$ of $F$. The base case $|F|=1$ is clear. Let $n\ge 2$ and assume that $\JB(F)$ is associative whenever $|F|=n-1$. Now let $F\subset S$ with $|F|=n$ be given. Choose $y \in F$ and let $F':= F \setminus \{y\}$. By the inductive hypothesis, $\JB(F')$ is associative. Since $F = F'\cup \{y\}$, then $\JB(F)$ is generated by the union of the associative JB-algebras $\JB(F')$ and $\JB(y)$. Now \Cref{ShirCohn2} yields that $\JB(F)$ is a JC-algebra, hence $\JB(F) \subset A_{sa}$ for a C*-algebra $A$. We may assume that $\JB(F)$ generates $A$ as a C*-algebra. By assumption, all $a,b\in F$ operator commute in $\JB(a,b)$, hence $a$ and $b$ commute in $A$ by \Cref{HOpc}. Then $A$ is generated by the set $F$ of mutually commuting self-adjoint elements, hence $A$ is an abelian C*-algebra. It follows that $\JB(F) \subset A_{sa}$ is associative, completing the inductive step.\\
\ind It follows that the filtered union $\bigcup_F \JB(F)$, where $F$ ranges over all nonempty finite subsets $F\subset S$, is an associative Jordan algebra. Then the norm closure $\JB(S)$ of $\bigcup_F \JB(F)$ is also associative, by continuity of the product.\\
\ind It remains to show that all $x,y \in \JB(S)$ operator commute in $J$, not just in $\JB(S)$. To this end, let $z \in J$ be arbitrary. We will show that $T_x(T_y(z))=T_y(T_x(z))$. By \Cref{ShirCohn2} the JB-algebra generated by $S \cup \{z\}$ is a JC-algebra, say $\JB(S \cup \{z\}) \subset A_{sa}$ for a C*-algebra $A$. Another two-step invocation of \Cref{HOpc} shows that since $x$ and $y$ operator commute in $\JB(S)$, these elements commute in $A$, hence $x$ and $y$ operator commute in $\JB(S \cup \{z\})$. We conclude that $T_x(T_y(z))=T_y(T_x(z))$. Since $z\in J$ was arbitrary, $T_x \circ T_y = T_y \circ T_x$, that is, $x$ and $y$ operator commute in $J$.
\end{proof}

%\end{proof}
%For any set $X$, the free associative $\R$-algebra $\R\langle X\rangle$ with the reversal involution can be embedded into $\ell^2(W(X))$, where $W(X)$ is the free monoid generated by $X$. Therefore any Jordan 
Using the same idea and method of proof as \Cref{ShirCohn2}, we offer a generalization of Macdonald's principle for JB-algebras, which includes as a special case Macdonald's principle with inverses for a JB-algebra \cite[p. 695]{LewMcC}.

\begin{theorem}\label{McDImp} Let  $F(X_1, \ldots, X_n, Y_1, \ldots, Y_m, Z)$ be a real Jordan polynomial of degree $1$ in $Z$. % \in \FJ_\R(\{X_1,\ldots,X_m,Y_1,\ldots, Y_n,Z\})
Suppose that for each JC-algebra $J$, pairwise operating commuting $n$-tuple $(x_1, \ldots, x_n)$, pairwise operator commuting $m$-tuple $(y_1,\ldots, y_m)$ and element $z$ in $J$ one has $F(x_1,\ldots,x_n,y_1,\ldots,y_m,z)=0$.
Then for each JB-algebra $J$, pairwise operating commuting $n$-tuple $(x_1, \ldots, x_n)$, pairwise operator commuting $m$-tuple $(y_1,\ldots, y_m)$ and element $z$ in $J$ one has $F(x_1,\ldots,x_n,y_1,\ldots,y_m,z)=0$.
\end{theorem}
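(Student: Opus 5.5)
The plan is to reduce to the JB-subalgebra generated by the given elements and to use \Cref{ShirCohn2} together with \Cref{SocThm}, so that the problem becomes a statement about JC-algebras.

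Fix a JB-algebra $J$, pairwise operator commuting tuples $(x_1,\ldots,x_n)$ and $(y_1,\ldots,y_m)$, and an element $z\in J$. Put $B:=\JB(x_1,\ldots,x_n)$ and $C:=\JB(y_1,\ldots,y_m)$. Since the $x_i$ pairwise operator commute in $J$, they operator commute in every JB-subalgebra containing them, in particular in $\JB(x_i,x_j)$. Hence \Cref{SocThm} shows that $B$ is associative, and likewise $C$.

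The obvious step would be to apply \Cref{ShirCohn2} to $\JB(B\cup C\cup\{z\})$, but that algebra is generated by \emph{three} associative subalgebras, so \Cref{ShirCohn2} does not apply directly. This is where linearity in $Z$ is used, playing the role of Macdonald's principle.

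\textbf{Step 1 (Peirce reduction).} Write $F(\underline{X},\underline{Y},Z)$ as a linear combination of terms of the form $M(Z)$, where $M$ is a word in the multiplication operators $T_{P}$ with $P$ Jordan polynomials in $\underline{X},\underline{Y}$. Because $F$ is linear in $Z$, we have $F(\underline{x},\underline{y},z)=\Phi(z)$, where $\Phi$ lies in the multiplication algebra $M_D(J)$ and $D:=\JB(B\cup C)$. By \Cref{ShirCohn2}, $D$ is a JC-algebra.

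\textbf{Step 2 (reduce $z$ to the Albert case).} By \Cref{JCThm} and \cite[Cor. 4.20]{AS03}, it suffices to show that $\pi(\Phi(z))=0$ for every homomorphism $\pi$ of $J$ onto $\A$ and onto JC-algebras; these homomorphisms separate points by \Cref{GNJB}. For JC-quotients the claim follows from the hypothesis, because images of operator commuting tuples remain operator commuting: they commute in the enveloping C*-algebra by \Cref{HOpc}, applied after embedding $\pi(J)$. So assume $\pi\colon J\to\A$ is onto. Then $\pi(B)$ and $\pi(C)$ are associative and finite-dimensional, hence each is generated by a single element $b$, respectively $c$, as in the proof of \Cref{ShirCohn2}. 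Moreover, $\pi(x_i)$ is a polynomial in $b$, and $\pi(y_j)$ is a polynomial in $c$.

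\textbf{Step 3 (Macdonald).} Define
\[
G(X,Y,Z):=F(p_1(X),\ldots,p_n(X),q_1(Y),\ldots,q_m(Y),Z),
\]
where $p_i,q_j$ are the real polynomials with $\pi(x_i)=p_i(b)$ and $\pi(y_j)=q_j(c)$. Then $G$ is a three-variable Jordan polynomial of degree $1$ in $Z$. In any JC-algebra, the elements $p_i(X)$ lie in $\JB(X)$ and therefore pairwise operator commute; the same holds for the $q_j(Y)$. So by hypothesis $G$ vanishes on all JC-algebras, hence on all special Jordan algebras by \Cref{sid}, and hence on all Jordan algebras by Macdonald's theorem. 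Evaluating at $(b,c,\pi(z))$ in $\A$ gives $\pi(F(\underline{x},\underline{y},z))=0$.

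The point I expect to need most care is Step 2: one has to confirm that the homomorphisms onto $\A$ together with the JC-part jointly separate points of $J$. I would use the decomposition of $J$ into its purely exceptional ideal and JC-quotient from \cite[Thm. 4.19]{AS03}, combined with the embedding of the exceptional part into $\ell^\infty(I,\A)$ from \Cref{GNJB}. Coordinate maps there need not be surjective, but their images are JB-subalgebras of $\A$. These are either JC-algebras, handled as above, or all of $\A$, so the argument goes through in both cases.
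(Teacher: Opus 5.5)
Your proposal is correct and follows the paper's proof. Step 2 just spells out the paper's terse ``we may assume $J=\A$'' via the coordinate maps of \Cref{GNJB}; these are best taken from the embedding of $J$ itself rather than of its purely exceptional ideal. Step 3 is exactly the paper's substitution $K(X,Y,Z)$, followed by \Cref{sid} and Macdonald's principle. Step 1 is never used. For a JC-quotient, the images of the tuples operator commute simply because $\pi$ is surjective, so $T_{\pi(a)}T_{\pi(b)}\pi(w)=\pi(T_aT_bw)$. Invoking \Cref{HOpc} there would be circular.
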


\begin{remarks}
(1) In view of \Cref{SocThm}, the operator commutativity assumption on the tuple $(x_1,\ldots,x_n)$ is equivalent to $x_i \circ (w \circ x_j) = (x_i \circ w) \circ x_j$ for all $1\le i<j\le n$ and $w\in \JB(x_i,x_j)$. A similar remark is in order for the tuple $(y_1,\ldots,y_m)$.\\
(2) The above formulation of Macdonald's principle is purely JB-algebraic, since it asserts that the pertinent identities need only be verified in the restricted functional-analytic class of JC-algebras, in contrast with the larger algebraic class of all special real Jordan algebras.
\end{remarks}

\begin{proof}[Proof~of~\Cref{McDImp}]
By the Gelfand--Naimark theorem for JB-algebras, \Cref{GNJB}, we may assume that $J = \A$. As in the proof of \Cref{ShirCohn2}, the associative Jordan algebra generated by $x_1, \ldots, x_n$ (resp.\ by $y_1,\ldots, y_m$) can be generated by a single element $x\in \A$ (resp.\ $y \in \A$). Let $G_1,\ldots, G_n, H_1, \ldots, H_m$ be real Jordan polynomials in one indeterminate such that $x_i = G_i(x)$ and $y_j = H_j(y)$ for all $1\le i\le n$, $1\le j\le m$. Consider the Jordan polynomial in three variables $$K(X, Y, Z) := F(G_1(X), \ldots, G_n(X), H_1(Y), \ldots, H_m(Y), Z),$$ which is homogeneous of degree $1$ in $Z$. By assumption, each JC-algebra fulfills $K$. By \Cref{sid}, each special real Jordan algebra fulfills $K$. By Macdonald's principle, $K$ is fulfilled by each real Jordan algebra. It follows that $F(x_1, \ldots, x_n, y_1, \ldots, y_m, z) = K(x, y, z) = 0$. 
\end{proof}

\begin{example} We consider the linearization of Glennie's identity $G_8$ with respect to $Z$; let
$$H'_8(X, Y, Z, W) := \{U_XU_Y(W), Z, X \circ Y\} + \{U_XU_Y(Z), W, X \circ Y\} - 2U_XU_Y(\{Z, X \circ Y, W\}).$$
Let $J$ be a JB-algebra and consider four elements $x, y, z, w \in J$. If one of $\{x,y\}$ operator commutes with one of $\{z,w\}$, then the following identity is valid: $$H'_8(x, y, z, w) = H'_8(y, x, z, w).$$
Indeed, we may assume without loss of generality that $x$ and $w$ operator commute. We apply \Cref{McDImp} to the Jordan polynomial $$F(X_1, X_2, Y_1, Z) := H_8'(X_1, Y_1, Z, X_2) - H_8'(Y_1, X_1, Z, X_2),$$
which is satisfied by all JC-algebras because it is the linearization of Glennie's identity $G_8$ with respect to $Z$. Since $x$ and $w$ operator commute in $J$, we find that
$$H'_8(x, y, z, w) - H_8'(y, x, z, w) = F(x, w, y, z) = 0.$$
Many more examples are produced via the same method of linearizing a Jordan polynomial satisfied by all JC-algebras with respect to one or more of its variables. For example, one could linearize $G_8$ (or $G_9$) also with respect to $X$ or $Y$.

\end{example}

\section{The free JB-algebra generated by two projections}\label{FJ2Sec}
In this section we use the Shirshov--Cohn theorem to prove the existence and give an explicit description of the free unital JB-algebra generated by two projections, defined as follows.

\begin{definition} Let $n$ be a positive integer. A \emph{free unital JB-algebra generated by $n$ projections} is the data of a unital JB-algebra $J$ and an $n$-tuple $(p_i)_{i=1}^n$ of projections in $J$ which satisfies the following universal property. For every unital JB-algebra $T$ and $n$-tuple $(q_i)_{i=1}^n$ of projections in $T$ there exists a unique Jordan homomorphism $\phi \colon J \to T$ such that $\phi(p_i) = q_i$ for every $i \in \{1,2,\ldots,n\}$.
\end{definition}

The free unital JB-algebra generated by $n$ projections is unique up to a unique isomorphism and will be denoted $\FJP(p_1,\ldots,p_n)$, if it exists. One can show its existence using methods from universal algebra, although this falls outside the scope of this article.\\
\ind Raeburn and Sinclair have given an explicit description of the free unital C*-algebra generated by two projections. In conjuction with the Shirshov--Cohn theorem, their result yields the following explicit description of $\FJP(p_1, p_2)$.
%XXX: can the Albert algebra be generated by three projections?
%E.g. $v_1v_1^t$, $v_2v_t^t$, $v_3v_3^t$.
%$v_1 = (1, 0, 0)$, $v_2 = \frac{1}{\sqrt{3}}(i, 1, j)$, $v_3 = \frac{1}{\sqrt{3}}(1, \frac{i+l}{\sqrt{2}}, \frac{j+il}{\sqrt{2}})$

\begin{theorem}\label{FJp2} The free unital JB-algebra generated by two projections exists and is isomorphic to the JC-algebra
\begin{equation}
J := \{f\in C([0, 1], M_2(\R)_{sa}) :  f(0), f(1) \text{ are diagonal}\},
\end{equation}
via an isomorphism carrying the generating projections into the functions
\begin{equation}\label{p1p2}
    p_1(t) := \begin{pmatrix}
    1 & 0\\
    0 & 0
    \end{pmatrix},
    \quad
    p_2(t) := \begin{pmatrix}
        t & \sqrt{t(1-t)}\\
        \sqrt{t(1-t)} & 1-t
    \end{pmatrix}.
\end{equation}
\end{theorem}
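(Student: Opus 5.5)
We will show directly that the pair $(J,(p_1,p_2))$ satisfies the universal property; this gives existence and the identification at once. First we check that $J$ is a JC-algebra generated by $p_1,p_2$ and $1$. It is a closed Jordan subalgebra of the C*-algebra $C([0,1],M_2(\C))$, viewed inside its self-adjoint part. Both $p_1(t)$ and $p_2(t)$ are projections for every $t$, and $p_2(0),p_2(1)$ are diagonal.

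For generation, put $P=p_1$, $Q=p_2$ and write $C=\JB(1,p_1,p_2)$. Since $PQP=U_{p_1}(p_2)$ is a Jordan expression, $C$ contains $PQP=t\,P$ and $(1-P)Q(1-P)=(1-t)(1-P)$. Hence $C$ contains every function of $t$ times the diagonal projections, and therefore the diagonal part $\{\operatorname{diag}(f,g): f,g\in C[0,1]\}$. The off-diagonal part of $p_2$ is $p_2-U_{p_1}p_2-U_{1-p_1}p_2$, which equals $\sqrt{t(1-t)}\,E$ with $E$ the symmetric off-diagonal unit. Jordan-multiplying this by functions $h(t)1$ gives $h\sqrt{t(1-t)}E$. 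By Stone--Weierstrass, these are dense in $\{g E: g\in C[0,1],\ g(0)=g(1)=0\}$. Together with the diagonal part this is all of $J$, so $C=J$.

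Next, the universal property. Let $T$ be a unital JB-algebra with projections $q_1,q_2$. Uniqueness of $\phi$ is immediate, since $p_1,p_2,1$ generate $J$ and Jordan homomorphisms are continuous. (The definition does not say unital, but the unit is the image of $1$ in $\JB(1,q_1,q_2)$; if needed we interpret $\phi$ as unital.) For existence, the Shirshov--Cohn theorem for JB-algebras, or \Cref{ShirCohn2} with $B=\JB(1,q_1)$ and $C=\JB(1,q_2)$, shows that $\JB(1,q_1,q_2)$ is a JC-algebra. So it sits in $A_{sa}$ for a unital C*-algebra $A$ generated by $q_1,q_2$. By Raeburn--Sinclair, the universal unital C*-algebra generated by two projections is
\[\{f\in C([0,1],M_2(\C)): f(0),f(1)\text{ diagonal}\},\]
with generators given by the formulas in \eqref{p1p2}. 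Hence there is a unital $*$-homomorphism $\Phi$ from it to $A$ sending $p_i$ to $q_i$. Restricting $\Phi$ to the self-adjoint part gives a Jordan homomorphism, and restricting further to $J$ gives $\phi\colon J\to A_{sa}$ with $\phi(p_i)=q_i$. Its range lies in the closed Jordan algebra generated by $1,q_1,q_2$, because $J$ is generated by $1,p_1,p_2$ and $\phi$ is continuous. So $\phi$ maps into $T$.

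The main obstacle is the generation step. We must show that the real Jordan algebra generated by $p_1,p_2$ is all of $J$, not merely a proper real form inside the complex C*-algebra. In particular, the imaginary off-diagonal entries must not appear. This is guaranteed because every element we produce is real symmetric, and the explicit Peirce computation above shows that all of $J$ is reached. Care is also needed at the endpoints $t=0,1$, where the off-diagonal entries vanish; the density argument handles this through the factor $\sqrt{t(1-t)}$.
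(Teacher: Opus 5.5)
Your proof is correct and follows essentially the same route as the paper. You combine Raeburn--Sinclair with the Shirshov--Cohn theorem for the universal property, and you use $U_{p_1}p_2$, $U_{1-p_1}p_2$ and Stone--Weierstrass to show $J=\JB(1,p_1,p_2)$; the only differences are that you prove generation first and then use continuity to land in $\JB(1,q_1,q_2)$, where the paper uses the preimage $\tilde{\phi}_{sa}^{-1}(T')$, and that you correctly note $\phi$ must be taken unital for uniqueness to hold.
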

\begin{proof}
Raeburn and Sinclair show in \cite{RaeSin} that the free unital C*-algebra $C^*(p_1,p_2)$ generated 
by two projections $p_1$ and $p_2$ may be identified with
\begin{equation*}
    A := \{f\in C([0,1], M_2(\C)) : f(0), f(1)\text{ are diagonal}\}
\end{equation*}
such that the generating projections are given by \eqref{p1p2}. Let $J'$ be the JB-subalgebra of $A_{sa}$ generated by $1$, $p_1$ and $p_2$. Let us first show that the triple $(J', p_1, p_2)$ has the desired universal property. Let $T$ be a unital JB-algebra with two projections $q_1, q_2\in T$. It is to be shown that there exists a unique Jordan homomorphism $\phi\colon J'\to T$ such that $\phi(p_i) = q_i$ for $i=1,2$.\\
\ind Denote by $T'$ the unital JB-subalgebra of $T$ generated by $q_1$ and $q_2$. By the Shirshov--Cohn theorem $T'$ is a JC-algebra, so $T'$ may be embedded into $B_{sa}$ for a unital C*-algebra $B$. The universal property of $A = C^*(p_1,p_2)$ yields a unital $*$-homomorphism $\tilde{\phi}\colon A \to B$ such that $\tilde{\phi}(p_1) = q_1$ and $\tilde{\phi}(p_2) = q_2$. Then the $*$-homomorphism $\tilde{\phi}$ induces a Jordan homomorphism $\tilde{\phi}_{sa}\colon A_{sa} \to B_{sa}$. Now $\tilde{\phi}_{sa}^{-1}(T')$ is a JB-subalgebra of $A_{sa}$ containing $1$, $p_1$, $p_2$, so that $J' \subset \tilde{\phi}_{sa}^{-1}(T')$ and $\tilde{\phi}_{sa}(J') \subset T' \subset T$. By restriction we obtain a unital Jordan homomorphism
$\phi = \tilde{\phi}|_{J'}\colon J' \to T$ with $\phi(p_i)=q_i$ for $i=1,2$. It is unique since $J'$ is generated by $p_1$ and $p_2$. Therefore, $J'$ has the required universal property.\\
\ind We finish by showing that $J'= J$, i.e.\ $J$ equals the JB-subalgebra of $A_{sa}$ generated by $1$, $p_1$ and $p_2$. The containment $J' \subset J$ is clear. Towards showing $J \subset J'$,  note that
\begin{equation}
(U_{p_1}p_2)(t) = \begin{pmatrix}
    t & 0\\
    0 & 0
\end{pmatrix},\quad
(U_{1-p_1}p_2)(t) = \begin{pmatrix}
0 & 0\\
0 & 1-t
\end{pmatrix}.
\end{equation}
By the Stone--Weierstrass theorem the space of real polynomials in $1$ and $t$ is dense in $C([0,1], \R)$. It follows that
$$\left\{\begin{pmatrix} f &0\\
0 & g
\end{pmatrix} : f,g\in C([0,1],\R)\right\}\subset J'.$$
This inclusion and $p_2\in J'$ yield that the set 
$$L := \left\{h \in C([0,1], \R) : \begin{pmatrix} 0 & h\\h & 0\end{pmatrix} \in J'\right\}$$
is a closed ideal of $C([0,1],\R)$ containing the function $h_0(t) := \sqrt{t(1-t)}$. It follows from the Stone--Weierstrass theorem that the inclusion $L \subset \{h \in C([0,1],\R) : h(0) = h(1) = 0\}$ is an equality. This establishes that $J \subset J'$, hence $J= J'$ as remained to be shown.
\end{proof}

\section{JB-algebras generated by two projections and an element are special}\label{FJ3Sec}
The Shirshov–Cohn theorem asserts that a JB- or JBW-algebra is special if it can be generated by
two elements. Somewhat surprisingly, a generating set consisting of \emph{three} projections also forces speciality. Similar to \Cref{ShirCohn2}, we will prove this result as a consequence of the fact that the Albert algebra cannot be generated by three projections. We prove this fact using the Peirce decomposition discussed in \Cref{JdsubSect} and the following lemma.
%\ind We first determine the minimum cardinality of a generating set for a finite-dimensional spin factor. Spin factors have been defined in \Cref{JBEgs}(4).  

\begin{lemma}\label{Uxef}
Let $e$ and $f$ be orthogonal projections in a JB-algebra $J$. Let $x \in J$ be such that $e \circ x = f \circ x = \frac{1}{2}x$. Then $x^2 = U_e(x^2) + U_f(x^2)$ holds with $\lVert U_e(x^2)\rVert = \lVert U_f(x^2)\rVert = \lVert x\rVert^2$.
\end{lemma}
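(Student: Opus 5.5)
The argument is purely algebraic: I would first obtain the decomposition of $x^2$ from the Peirce rules, and then get the norms from a symmetry argument. Put $g := 1 - e - f$, working in the unitization $\tilde J$ if $J$ is not unital; $\tilde J$ is a JB-algebra by Behncke's theorem and contains $J$ isometrically. Then $(e,f,g)$ is a supplementary system of orthogonal idempotents. The hypothesis $e\circ x = f\circ x = \tfrac12 x$ gives $g\circ x = 0$, so $x$ lies in the Peirce space $J_{12}$ relative to $(e,f,g)$. By the multiplication rule \eqref{Pr1}, $x^2 \in J_{11} + J_{22}$. Write $x^2 = a + b$ with $a\in J_{11}$ and $b \in J_{22}$. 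Since $U_e$ is the Peirce projection onto $J_{11}$ and $U_f$ is the projection onto $J_{22}$ (standard, cf.\ \cite{FaKor}), we get $a = U_e(x^2)$ and $b = U_f(x^2)$. This proves $x^2 = U_e(x^2) + U_f(x^2)$.

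For the norms, note that $a \circ b = 0$ by \eqref{Pr3}, and that $a,b$ lie in the associative JB-algebra $\JB(x^2)$; more precisely, $a$ and $b$ are orthogonal positive elements. The positivity of $a$ and $b$ follows because $U_e$ and $U_f$ are positive maps. Hence $\lVert x\rVert^2 = \lVert x^2\rVert = \max(\lVert a\rVert, \lVert b\rVert)$, using orthogonality and functional calculus in $\JB(a,b)\cong C_0(X)$. In particular $\lVert a\rVert,\lVert b\rVert \le \lVert x\rVert^2$, and it remains to show $\lVert a\rVert = \lVert b\rVert$.

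To get this equality I would use the Peirce reflection / symmetry $s := e - f + g$, with $s^2 = 1$, so that $U_s$ is a Jordan automorphism. This is not quite enough by itself, since $U_s$ fixes $x$ up to sign and fixes $a$ and $b$. Instead, the key identity I would use is $U_x$ interchanging the two corners. A cleaner route is the identities
\[
U_x(a) = \lVert\cdot\rVert\text{-controlled}, \qquad x^3 = x\circ x^2 = \tfrac12\,\{\ldots\},
\]
and specifically the Peirce identity $U_x(e) = b$ and $U_x(f) = a$ for $x\in J_{12}$. Granting this, we get
\[
\lVert b\rVert = \lVert U_x e\rVert \le \lVert x\rVert^2 \lVert e\rVert, \qquad \lVert a\rVert \le \lVert x\rVert^2.
\]
That alone is insufficient, so instead I would use $U_x(a) = U_xU_x(f) = U_{x^2}$-type relations. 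The simplest device is $U_x(b)$ together with the fundamental formula: $U_{U_x e} = U_x U_e U_x$. Since $U_xU_eU_x$ applied to $1$ gives $U_x(U_e x^2) = U_x a$, and $x^2 = a+b$ with $U_x a$ lying in $J_{22}$, one obtains $b^2 = U_b 1 = U_x a$. Symmetrically $a^2 = U_x b$. Then
\[
\lVert b\rVert^2 = \lVert U_x a\rVert \le \lVert x\rVert^2\lVert a\rVert \quad\text{and}\quad \lVert a\rVert^2 \le \lVert x\rVert^2 \lVert b\rVert.
\]
Combined with $\max(\lVert a\rVert,\lVert b\rVert) = \lVert x\rVert^2$, this gives $\lVert a\rVert = \lVert b\rVert = \lVert x\rVert^2$. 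For example, if $\lVert a\rVert = \lVert x\rVert^2$, then $\lVert x\rVert^4 \le \lVert x\rVert^2\lVert b\rVert$, so $\lVert b\rVert \ge \lVert x\rVert^2$.

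The main obstacle is verifying $U_x(f) = a$, i.e.\ $U_x(f) = U_e(x^2)$, for $x\in J_{12}$. I would check it from $U_x f = 2x\circ(x\circ f) - x^2\circ f = x^2 - x^2\circ f$, which equals $a + b - b = a$ since $f\circ a = 0$ and $f\circ b = b$. By symmetry $U_x e = b$. With these, the fundamental formula gives $U_a = U_x U_f U_x$, hence $a^2 = U_a 1 = U_x U_f (x^2) = U_x b$, which is exactly the identity needed above.
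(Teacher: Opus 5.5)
Your argument is correct. It shares the paper's skeleton: the Peirce rule \eqref{Pr1} for the decomposition, the identity $U_x(e) = U_f(x^2)$, and $\lVert x^2\rVert = \max\{\lVert a\rVert,\lVert b\rVert\}$ for the two orthogonal pieces. Where you differ is in how you show the two norms are equal.

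\begin{itemize}
\item \emph{The paper's route.} It works in $U_{e+f}(J)$ rather than a unitization. It then quotes \cite[Lemma 1.30]{AS03}, $\lVert U_a(b^2)\rVert = \lVert U_b(a^2)\rVert$, with $a=x$ and $b=e$. This gives $\lVert U_e(x^2)\rVert = \lVert U_x(e)\rVert = \lVert U_f(x^2)\rVert$ in one line.
\item \emph{Your route.} You feed $U_x(f)=a$ and $U_x(e)=b$ into the fundamental formula and get the exact relations $a^2 = U_x(b)$ and $b^2 = U_x(a)$. You then squeeze, using $\lVert U_x y\rVert \le \lVert x\rVert^2\lVert y\rVert$ together with the max identity.
\end{itemize}

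Your route is more self-contained. The fundamental formula holds in every Jordan algebra. The norm bound follows from positivity of $U_x$, $U_x(1) = x^2$ and the order unit norm, all of which you already have in hand. It also yields the extra algebraic information $a^2 = U_x(b)$, $b^2 = U_x(a)$. The paper's route is shorter and gives the equality directly, but it outsources the key step to a cited norm identity.

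The price of your route is that the squeeze genuinely needs the sharp constant. The naive estimate from $U_x = 2T_x^2 - T_{x^2}$ only gives $3\lVert x\rVert^2$, which yields just $\lVert b\rVert \ge \lVert x\rVert^2/3$. So you should state that bound explicitly and justify it.

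When you write this up, delete the false starts in the middle paragraph. In particular, drop the claim that $a,b\in\JB(x^2)$, which is false in general. For example, take $x = E_{12}+E_{21}$ in $M_2(\R)_{sa}$ with $e = E_{11}$ and $f = E_{22}$. Then $x^2 = 1$, so $\JB(x^2) = \R 1$, while $a = E_{11}\notin \R 1$. What you actually use is that $\JB(a,b) = \JB(a)\oplus \JB(b)$ is associative, because $J_{11}\circ J_{22} = \{0\}$. Finally, dispose of the trivial case $x = 0$ before dividing by $\lVert x\rVert^2$.
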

\begin{proof}
Working in the JB-subalgebra $U_{e+f}(J)$, which contains $e$, $f$ and $x$, we may assume that $e+f = 1$. The Peirce multiplication rule \eqref{Pr1} gives that $x^2 = U_e(x^2) + U_f(x^2) \in U_e(J) + U_f(J)$. Using that $x^2$ and $f=1-e$ operator commute, we obtain $$U_x(e) = 2x\circ (x\circ e) - x^2 \circ e = 2x \circ \tfrac{1}{2}x - x^2 \circ e = x^2 \circ (1-e) = x^2 \circ f = U_f(x^2).$$
According to \cite[Lemma 1.30]{AS03} for all $a,b\in J$ it holds that $\lVert U_a(b^2)\rVert = \lVert U_b(a^2)\rVert$. Taking $a=x$ and $b=e = e^2$ yields
$$\lVert U_e(x^2)\rVert = \lVert U_x(e)\rVert = \lVert U_f(x^2)\rVert.$$
Because $U_e(J) + U_f(J)$ is a direct sum of JB-algebras, we conclude that
$$\lVert x\rVert^2 = \lVert x^2\rVert = \max\{\lVert U_e(x^2) \rVert, \lVert U_f(x^2)\rVert\} = \lVert U_e(x^2)\rVert = \lVert U_f(x^2)\rVert.$$
\end{proof}

\begin{theorem}\label{Aapq} The Albert algebra $M_3(\O)_{sa}$ cannot be generated by two projections, a third element and the unit element.
\end{theorem}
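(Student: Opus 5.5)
Suppose, for a contradiction, that $\A = M_3(\O)_{sa}$ is generated by projections $p,q$, an element $x$ and $1$. We may assume $p,q\notin\{0,1\}$, since otherwise $\A$ would be generated by two elements, which contradicts the Shirshov--Cohn theorem. The plan is to use the structure of the pair $p,q$ to find two associative Jordan subalgebras $B,C$ of $\A$ such that $B\cup C\cup\{x\}$ still generates $\A$, and then to collapse this to two generators. The most direct reduction uses the JB-algebra $\JB(1,p,q)$. By \Cref{FJp2} it is a quotient of $\{f\in C([0,1],M_2(\R)_{sa}): f(0),f(1)\text{ diagonal}\}$. Inside the finite-dimensional algebra $\A$ it is therefore a finite-dimensional JC-algebra, namely a direct sum of copies of $\R$ and $M_2(\R)_{sa}$ (the spin factor $V_2$). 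The rank constraint (rank $3$ in $\A$) allows at most one $M_2(\R)_{sa}$ block, occupying rank $2$, together with a rank-one piece. Alternatively all blocks are $\R$, in which case $p$ and $q$ operator commute and $\JB(1,p,q)$ is associative.

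In the associative case, $\JB(1,p,q)\cong\R^k$ is generated by a single element $a$. Then $\A$ is generated by $a$ and $x$, which contradicts the Shirshov--Cohn theorem. The main case is therefore $\JB(1,p,q)\cong \R e_0\oplus M_2(\R)_{sa}$, where $e_0$ is a minimal projection and $e_1+e_2=1-e_0$ with $e_1,e_2$ minimal diagonal projections of the spin block. I will choose $e_1,e_2$ so that $p=e_1$ or $p=e_1+e_0$ (after possibly replacing $p$ by $1-p$), and so that $q$ has a nonzero off-diagonal Peirce component $v\in\A_{12}(\underline e)$ relative to $\underline e=(e_0,e_1,e_2)$. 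The elements $e_0,e_1,e_2,v$ all lie in $\JB(1,p,q)$, which is spanned by $e_0$, the diagonal part $\R e_1+\R e_2$, and $\R v$. So the subalgebra generated by $p,q,x$ is the one generated by $e_0,e_1,e_2,v,x$.

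The key counting step uses the Peirce decomposition $\A=\bigoplus\A_{rs}(\underline e)$ with $\A_{rr}=\R e_r$ and $\A_{rs}\cong\O$ (dimension $8$). I will show that the subalgebra $D$ generated by $\underline e$, $v$ and $x$ misses most of $\A_{12}$. Since $v$ is a single vector in the $8$-dimensional space $\A_{12}$, \Cref{Uxef} gives $v^2=\lVert v\rVert^2(e_1+e_2)$ after normalisation. The products that can land in $\A_{12}$ are generated, via \eqref{Pr2}, by $v$ together with products $x_{10}\circ x_{02}$ and iterates built from the off-diagonal components $x_{01},x_{02},x_{12}$ of $x$. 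The Peirce components $x_{rs}$ lie in $D$ because they are obtained from $x$ by $U$-operators of the $e_r$. I then pass to octonionic coordinates: $\A_{12}$, $\A_{01}$, $\A_{02}$ correspond to octonions $c,b,a$ with the multiplication rules of $M_3(\O)_{sa}$. The subalgebra generated then sits inside $M_3(S)_{sa}$, where $S$ is the subalgebra of $\O$ generated by the three octonions $v,x_{01},x_{02},x_{12}$ (up to conjugations), after normalising $v$ to $1$ by an automorphism of $\A$ fixing $\underline e$. This uses the triality-type automorphisms acting on the off-diagonal entries. After this normalisation only two essentially free octonions remain, and by Artin's theorem they generate an associative subalgebra $S\subseteq\O$, a copy of at most $\mathbb{H}$. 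Hence $D\subseteq M_3(\mathbb{H})_{sa}\neq\A$, a contradiction.

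I expect the normalisation step to be the main obstacle. The task is to check precisely that an automorphism of $\A$ fixing $e_0,e_1,e_2$ can send the $(1,2)$-entry of $v$ to $1\in\O$, and that after this the entries of $x$ together with $1$ generate only an associative subalgebra of $\O$. The standard description of the stabiliser of the diagonal frame, as the group $\mathrm{Spin}(8)$ acting by triality on the three off-diagonal octonions, should allow this. I must also verify, using \eqref{Pr1}--\eqref{Pr3} and the explicit Albert product, that every product of elements of $M_3(S)_{sa}$ with $S$ associative stays in $M_3(S)_{sa}$. This holds because $M_3(\mathbb{H})_{sa}$ is a Jordan subalgebra.
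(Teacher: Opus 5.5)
Your reduction of $p,q$ to a Jordan frame $(e_0,e_1,e_2)$ plus one vector $v\in\A_{12}(\underline e)$ is sound, modulo a justification discussed below. The decisive normalisation step, however, is miscounted, and as you formulate it, it is false. Besides $v$ there are \emph{three} free off-diagonal octonions, namely the entries of $x_{01}$, $x_{02}$ and $x_{12}$. (You list the four objects $v,x_{01},x_{02},x_{12}$ as ``the three octonions''.) Sending the entry of $v$ to $1$ disposes of only one of the four. So the check you announce, that afterwards the entries of $x$ together with $1$ generate an associative subalgebra of $\O$, does not follow from normalising $v$ alone. For instance, suppose $v=E_{12}+E_{21}$ and the $(0,1)$-, $(0,2)$-, $(1,2)$-entries of $x$ are $i$, $j$, $l$. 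These generate all of $\O$, and the containment in $M_3(S)_{sa}$ then says nothing. Artin's theorem needs exactly two free octonions, so you must simultaneously normalise a second entry, e.g.\ that of $x_{02}$.

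This is what the coordinatization result already used in the paper provides; no triality is needed. Put $s_{12}:=\lVert v\rVert^{-1}v$ and pick a unit $s_{02}\in\A_{02}(\underline e)$ with $x_{02}=\lVert x_{02}\rVert s_{02}$. By \Cref{Uxef} and $\A_{rr}=\R e_r$ you get $s_{12}^2=e_1+e_2$ and $s_{02}^2=e_0+e_2$. Then \cite[Prop.~17.1.1]{McC04} gives an automorphism $\phi$ with $\phi(e_h)=E_{hh}$, $\phi(s_{12})=E_{12}+E_{21}$ and $\phi(s_{02})=E_{02}+E_{20}$. Now $\phi(v)$ and the $(0,2)$-entry of $\phi(x)$ are real, and only the $(0,1)$- and $(1,2)$-entries $c,d$ of $\phi(x)$ remain free. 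By Artin's theorem the unital subalgebra $\K\subseteq\O$ generated by $c,d$ is associative and closed under conjugation. Hence $\phi(\JB(1,p,q,x))\subseteq M_3(\K)_{sa}\subsetneq\A$.

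You should also justify the rank claim, since it is what makes $(e_0,e_1,e_2)$ a frame with $\A_{rr}=\R e_r$. Let $u$ be the unit of an $M_2(\R)_{sa}$-block and $s$ its off-diagonal symmetry. Then $s+1-u$ is a symmetry of $\A$, and its $U$-operator is an automorphism of $\A$ exchanging the two minimal projections of the block. So these projections have equal rank, hence rank one.

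Once repaired, your route genuinely differs from the paper's. The paper diagonalises the third element and makes $p$ real, using the idempotent equation for the $(1,3)$-entry. It then needs the description of atoms from \cite{vGKR} to place $q$ or $1-q$ in some $M_3(\K)_{sa}$. You instead absorb both projections into a frame plus one connecting vector via \Cref{FJp2}, and handle the arbitrary third element by Artin's theorem alone.
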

\begin{proof}
Let $a,p,q\in \A := M_3(\O)_{sa}$ and suppose that $p$ and $q$ are projections. We shall prove that $\JB(1, a, p, q) \subsetneq \A$. By the spectral theorem \cite[Thm. III.1.1]{FaKor} there exists a Jordan frame $\underline{e} = (e_1, e_2, e_3)$ and $\lambda_1,\lambda_2,\lambda_3\in \R$ such that $a = \sum_{i=1}^3 \lambda_i e_i$. %The Jordan frame $\underline{e}$ induces a Peirce decomposition
%$$\A = \R e_1 \oplus \R e_2 \oplus \R e_3 \oplus J_{12}(\underline{e}) \oplus J_{23}(\underline{e}) \oplus J_{13}(\underline{e}).$$ 
Consider the Peirce decomposition \eqref{PcJ} of $p$ with respect to $\underline{e}$, say
$$p = \mu_1e_1 + \mu_2 e_2 + \mu_3 e_3 + b_{12}+b_{23}+b_{13},$$
with $\mu_1, \mu_2, \mu_3 \in \R$ and $b_{rs}\in \A_{rs}(\underline{e}) := \{x\in \A: e_r \circ x = e_s \circ x = \frac{1}{2}x\}$ for $1\le r<s\le 3.$ After renumbering, we may assume that $$b_{13}=0 \text{ or } b_{12}, b_{23} \neq 0.$$
\ind We can find $s_{12} \in \A_{12}(\underline{e})$ such that $\lVert s_{12}\rVert = 1$ and $b_{12} = \lVert b_{12}\rVert s_{12}$: if $b_{12}\neq 0$ take $s_{12} = \lVert b_{12}\rVert^{-1}b_{12}$; if $b_{12} = 0$ choose an arbitrary $s_{12} \in \A_{12}(\underline{e})$ of norm $1$. Then $s_{12}^2 \in \A_{11}(\underline{e}) + \A_{22}(\underline{e}) = \R e_{1} + \R e_{2}$ according to \eqref{Pr1}, that is, $s_{12}^2 = \nu_1 e_1 + \nu_2 e_2$ for certain $\nu_1, \nu_2 \in \R$. Since $s_{12}^2 \ge 0$ we have $\nu_1, \nu_2 \ge 0$. On the other hand, \Cref{Uxef} gives $|\nu_1|=|\nu_2|=\lVert s_{12}\rVert^2 = 1.$ We arrive at $\nu_1 = \nu_2 = 1$, i.e.\ $s_{12}^2 = e_1 + e_2$. Similarly, there exists $s_{23} \in \A_{23}(\underline{e})$ such that $b_{23} = \lVert b_{23}\rVert s_{23}$ and $s_{23}^2 = e_2 + e_3$.\\
\ind Let $(E_{rs})_{r,s=1}^3$ be the standard matrix units of $M_3(\R)$. According to \cite[Prop. 17.1.1]{McC04}, there exists a Jordan automorphism $\phi\colon \A \to \A$ such that $\phi(e_h)=E_{hh}$ for $1\le h\le 3$ as well as $\phi(s_{12})=E_{12} + E_{21}$ and $\phi(s_{23})=E_{23}+ E_{32}$. We find that
\begin{equation*}
\phi(a) = \begin{pmatrix} \lambda_1 & 0 & 0\\
0 & \lambda_2 & 0\\
0 & 0 & \lambda_3
\end{pmatrix}, \quad
\phi(p) = \begin{pmatrix} \mu_1 & \beta_{12} & \beta_{13} \\
\beta_{12} & \mu_2 & \beta_{23}\\
\beta_{13}^* & \beta_{23} & \mu_3
\end{pmatrix},
\end{equation*}
with $\beta_{12} = \lVert b_{12}\rVert$, $\beta_{23} = \lVert b_{23}\rVert \in \R$ and $\beta_{13} \in \O$. We claim that $\beta_{13} \in \R$. If $\beta_{13} = 0$ this is clear, and otherwise by assumption $\beta_{12} , \beta_{23} \neq 0$. Comparing the $(1,3)$ entries in $\phi(p)=\phi(p^2)=\phi(p)^2$ gives that $$\beta_{13} = \mu_1 \beta_{13} + \beta_{12}\beta_{23} + \beta_{13}\mu_3 = \beta_{12}\beta_{23} + (\mu_1+\mu_3)\beta_{13}.$$ From $(1-\mu_1-\mu_3)\beta_{13}=\beta_{12}\beta_{23} \in \R\setminus \{0\}$ we infer that $\beta_{13} \in \R$. Therefore, we have
$$\JB(1, \phi(a), \phi(p)) \subset M_3(\R)_{sa}.$$
\ind We now consider the projection $\phi(q)$. If $\rk(q) \ge 2$, then on replacing $q$ by $1-q$, which has rank $\rk(1-q)=3-\rk(q)$, we may assume that $\rk(\phi(q)) = \rk(q) \le 1$.  Then $\phi(q)$ is an atom or zero. According to \cite[Prop. A.16]{vGKR} there exists an associative subalgebra $\K \subsetneq \O$ such that $\phi(q) \in M_3(\K)_{sa}$. It follows that
$$\phi(\JB(1, a,p,q)) \subset \JB(M_3(\R)_{sa} \cup \{\phi(q)\}) \subset M_3(\K)_{sa} \subsetneq M_3(\O)_{sa} = \A.$$
Applying $\phi^{-1}$, we conclude that $\JB(1, a, p, q) \subsetneq \A$, as desired.
\end{proof}

The algebraic statement about the Albert algebra in \Cref{Aapq} implies the following speciality result for JB-algebras. Note that Albert--Paige showed in \cite[Cor. 2]{AlbPaige} that the exceptional Albert algebra $\A$ can be generated by three elements.

\begin{theorem}\label{ShirCohn3} Let $A$ be a JB-algebra which can be generated as a JB-algebra by two projections, a third element and possibly the unit element. Then $A$ is a JC-algebra.
\end{theorem}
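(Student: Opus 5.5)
We argue exactly as in the proof of \Cref{ShirCohn2}, replacing the algebraic Shirshov--Cohn theorem by \Cref{Aapq}. Let $p,q\in A$ be the two projections and $a\in A$ the third element, so that $A=\JB(p,q,a)$ or $A=\JB(1,p,q,a)$. Let $D$ be the (non-closed) Jordan subalgebra of $A$ algebraically generated by these elements; $D$ is norm dense in $A$.

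Suppose, towards a contradiction, that $A$ is not a JC-algebra. By \Cref{JCThm} (equivalently \cite[Cor. 4.20]{AS03}) there is a surjective Jordan homomorphism $\pi\colon A\to\A$. By \cite[Prop. 1.35]{AS03}, $\pi$ is norm continuous, so $\pi(D)$ is dense in $\pi(A)=\A$. Since $\A$ is finite-dimensional, $\pi(D)$ is closed, and hence $\pi(D)=\A$. Thus $\A$ is algebraically generated by $\pi(p),\pi(q),\pi(a)$, together with $\pi(1)$ in the unital case.

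We now check that this configuration is covered by \Cref{Aapq}. Each $\pi(p)$ and $\pi(q)$ is idempotent, hence a projection in $\A$. If $A$ is non-unital, the unit of $\A$ may be adjoined, since $\A=\pi(D)\subset \JB(1,\pi(a),\pi(p),\pi(q))$. If $A$ is unital, $\pi(1_A)$ is an idempotent acting as a unit on $\pi(A)=\A$, so $\pi(1_A)=1_{\A}$. In either case $\A=\JB(1,\pi(a),\pi(p),\pi(q))$, which contradicts \Cref{Aapq}. Therefore $A$ is a JC-algebra.

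The substantive work has already been done in \Cref{Aapq}. The only remaining point that needs care is the bookkeeping about the unit: \Cref{Aapq} is stated with the unit included, and enlarging the generating set by $1$ only enlarges the generated subalgebra, so the contradiction persists. As in \Cref{JWThm}, the JBW-version then follows from \Cref{tauLem} applied to Glennie's identity $G_8$.
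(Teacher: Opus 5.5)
Your proof is correct and follows the paper's argument. You assume $A$ is not a JC-algebra, take a surjective Jordan homomorphism onto $\A$ via \Cref{JCThm}, and contradict \Cref{Aapq} using that the images of $p$ and $q$ are projections. Your density argument and your handling of the unit make explicit what the paper compresses into ``$\A=\phi(A)$ is generated by $\{1,\phi(a),\phi(p),\phi(q)\}$'' and ``the non-unital case is similar.''
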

\begin{proof}
Suppose first that $A$ is unital. Let $a,p,q\in A$ be elements with $p=p^2$ and $q=q^2$ such that $A = \JB(1, a, p, q)$. If $A$ is not a JC-algebra, then by \Cref{JCThm}(4) there exists a surjective unital Jordan homomorphism $\phi\colon A \to \A$ onto the Albert algebra. Note that $\phi(p)$ and $\phi(q)$  are projections in $\A$. Now $\A = \phi(A)$ is generated by $\{1, \phi(a), \phi(p), \phi(q)\}$, which is absurd by \Cref{Aapq}. Therefore, $A$ is a JC-algebra. The non-unital case is similar.
\end{proof}

\begin{corollary} Let $A$ be a JBW-algebra which can be generated as a JBW-algebra by two projections, a third element and possibly the unit element. Then $A$ is a JW-algebra.
\end{corollary}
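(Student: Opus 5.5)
The plan is to mirror the proof of \Cref{JWThm}, replacing \Cref{ShirCohn2} by \Cref{ShirCohn3}. Let $p,q$ be the two projections and $a$ the third element, so that $A$ is generated as a JBW-algebra by $\{p,q,a\}$ (together with $1$ if it is included). Let $D := \JB(1,a,p,q)$ be the norm-closed JB-subalgebra generated by these elements; the unit may always be added, since $1 \in A$ and adding it only enlarges $D$. By the JBW-generation hypothesis, $D$ is $\sigma$-weakly dense in $A$. By \Cref{ShirCohn3}, $D$ is a JC-algebra, so by \Cref{JCThm} it satisfies Glennie's identity $G_8$.

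Next, the multiplication of a JBW-algebra is separately $\sigma$-weakly continuous, and the $\sigma$-weak topology is a linear topology. Hence \Cref{tauLem} applies with $\tau$ the $\sigma$-weak topology. It shows that the $\sigma$-weak closure of $D$, which is all of $A$, satisfies $G_8$. By \Cref{JCThm}, (4) $\implies$ (1), $A$ is a JC-algebra. A JC-algebra that is also a JBW-algebra is a JW-algebra \cite[Cor. 2.78]{AS03}, which finishes the proof.

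There is no real obstacle. The only points to check are that the closure in \Cref{tauLem} is taken with respect to a topology under which multiplication is separately continuous, which is exactly the fact cited for JBW-algebras in \Cref{JWThm}, and that the non-unital generation case reduces to the unital one. The latter holds because a JBW-algebra is unital and $\JB(a,p,q)\subset \JB(1,a,p,q)$.

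Alternatively, one could argue as in the remark after \Cref{JWThm}. There is a normal surjection $D^{**}\to A$ extending the inclusion of $D$, so $A\cong cD^{**}$ for some central projection $c$. Since $D$ is a JC-algebra, $D^{**}$ is a JW-algebra, and hence so is $A$.
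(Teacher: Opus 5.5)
Your proposal is correct and follows the paper's route. The paper deduces this corollary from \Cref{ShirCohn3} exactly as \Cref{JWThm} is deduced from \Cref{ShirCohn2}: take the $\sigma$-weakly dense JC-subalgebra $D$, apply \Cref{tauLem} to $G_8$, then use \Cref{JCThm} and \cite[Cor. 2.78]{AS03}. Your alternative via $D^{**}$ is the argument given in the remark after \Cref{JWThm}.
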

\begin{proof}
The corollary is deduced from the preceding result as in \Cref{JWThm}.
\end{proof}

\begin{lemma}\label{A4}
The Albert algebra $M_3(\O)_{sa}$ can be generated by four atoms.
\end{lemma}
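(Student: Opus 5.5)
We exhibit four explicit rank-one projections $q_i = v_iv_i^*$ with $v_i\in\O^3$ of unit length, and show that they generate $\A$. The candidates suggested by the comment in the source are
\[
v_1 = (1,0,0),\quad v_2 = \tfrac{1}{\sqrt3}(i,1,j),\quad v_3 = \tfrac{1}{\sqrt3}\bigl(1,\tfrac{i+l}{\sqrt2},\tfrac{j+il}{\sqrt2}\bigr),
\]
where $i,j,l$ are standard imaginary units of $\O$ generating $\O$ as an algebra, together with a fourth vector such as $v_4=(0,0,1)$ or $\frac{1}{\sqrt2}(1,1,0)$, chosen to make the bookkeeping easy. Each $v_iv_i^*$ is an atom of $\A$ once the entries of $v_i$ lie in an associative subalgebra of $\O$, since then $v_iv_i^*$ is a genuine rank-one projection. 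Here $\{1,i,j\}$ generates $\H$, and $\{1,i+l,j+il\}$ lies in a quaternion subalgebra because $(i+l)/\sqrt2$ and $(j+il)/\sqrt2$ anticommute appropriately. This has to be checked, in the spirit of \cite[Prop. A.16]{vGKR}.

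Let $B=\JB(q_1,q_2,q_3,q_4)$. We first produce diagonal idempotents. With $q_1=E_{11}$, the Peirce projections $U_{q_1}$ and $U_{1-q_1}$ are polynomial in $q_1$ (using the unit, or the sum of the $q_i$ if the unit is excluded). Applying them to $q_2$ and $q_3$ gives elements whose $(1,1)$ entry is nonzero, whose Peirce-$\frac12$ parts are $\frac13(i,j)$-type and $(1, \ldots)$-type row vectors in $\A_{12}\oplus\A_{13}$, and whose lower-right $2\times2$ blocks are spin-factor elements. From these blocks and $q_4$ we extract a Jordan frame $E_{11},E_{22},E_{33}$ inside $B$, for instance by taking spectral projections of the block $U_{1-q_1}(q_2)$, which lies in $B$ by the functional calculus. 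If this frame is not the standard one, we conjugate by an automorphism as in \cite[Prop. 17.1.1]{McC04}.

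With a Jordan frame in $B$, $B$ decomposes compatibly with the Peirce decomposition, so it suffices to show that each off-diagonal Peirce space $B\cap\A_{rs}\cong\O$ is everything. Write $S_{rs}\subset\O$ for the set of octonions occurring as $(r,s)$-entries in $B$. The products $\A_{12}\circ\A_{23}\subset\A_{13}$ realize octonion multiplication $x\otimes y\mapsto \frac12 xy$ (up to conjugation conventions), and $U$-operators $U_{x_{12}}$ with $x_{12}\in\A_{12}$ act as $y\mapsto xy^*x$-type maps. Hence $S_{12}S_{23}\subset S_{13}$ and its cyclic analogues hold, up to conjugation. The entries coming from $q_2,q_3$ provide $S_{12}\ni i,\ (i+l)$, $S_{13}\ni j,\ (j+il)$, $S_{23}\ni j,\ldots$ and so on, after dividing by the scalars already present. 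Closing under these products yields $1,i,j,l$ and their products, hence all of $\O$ in each $S_{rs}$, and therefore $B=\A$. Alternatively, once $B\supseteq M_3(\R)_{sa}$ and $S_{12}$ contains generators of $\O$, the Jordan algebra generated is $M_3(\O)_{sa}$, since the subalgebra $M_3(\K)_{sa}$ generated corresponds to the subalgebra $\K$ generated by the entries.

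The main obstacle is the second step: guaranteeing that a full real frame and real off-diagonal matrix units ($E_{12}+E_{21}$, etc.) lie in $B$, so that the entry-closure argument applies. The first attempt will be to choose $q_4$ so that $q_1,q_4$ and $1-q_1-q_4$ form a frame, for example $v_4=(0,1,0)$. Then $U_{q_1+q_4}(q_3)$ gives, after taking its Peirce-$\frac12$ part, an element of $\A_{12}$ with a real entry, which normalizes to $E_{12}+E_{21}$ via \Cref{Uxef}. The final containment then reduces to checking that the entries produced generate $\O$, which holds because $i,j,l$ do.
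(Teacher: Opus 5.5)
Your proposal never fixes the fourth atom, and the step that carries the whole lemma (that the off-diagonal entries produced actually fill out $\O$) is asserted rather than verified. This is a genuine gap, not bookkeeping. Without real off-diagonal units linking the three Peirce slots, the entry sets $S_{rs}$ only have to form a closed triple, not subalgebras of $\O$, and such a triple can be proper even though its entries jointly generate $\O$. Your $q_2,q_3$ have no real off-diagonal entries at all. In particular, the Peirce-$\frac12$ part of $U_{q_1+q_4}(q_3)$ is a multiple of $(i+l)$ in slot $(1,2)$, not a real entry as you claim, and $q_2$ contributes $ij$, not $j$, to slot $(1,3)$. In fact your first attempt $v_4=(0,1,0)$ fails, as does $v_4=(0,0,1)$, which gives the same frame.

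To see this, write $x[rs]$ for the hermitian matrix with $x$ in position $(r,s)$, $x^*$ in position $(s,r)$ and zeros elsewhere. Put $k:=ij$ and compute in $\O=\H\oplus\H l$ with $(a+bl)(c+dl)=(ac-d^*b)+(da+bc^*)l$. Then $x[12]\circ y[23]=\tfrac12(xy)[13]$, $x[12]\circ y[13]=\tfrac12(x^*y)[23]$, $x[13]\circ y[23]=\tfrac12(xy^*)[12]$, and $x[rs]\circ y[rs]\in\R E_{rr}+\R E_{ss}$. The off-diagonal Peirce components of $q_2$ and $q_3$ are multiples of $i[12]$, $k[13]$, $j[23]$ and of $(i+l)[12]$, $(j+il)[13]$, $\bigl((i+l)(j+il)\bigr)[23]$, where $(i+l)(j+il)=i+k-l-jl$. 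All these entries lie in
\[
S_{12}=\mathrm{span}\{i,l,1+kl,j-il\},\quad S_{13}=\mathrm{span}\{k,jl,j+il,1-kl\},\quad S_{23}=\mathrm{span}\{j,kl,l-k,jl-i\}.
\]
Checking the $48$ products of basis vectors gives $S_{12}S_{23}\subseteq S_{13}$, $S_{12}^*S_{13}\subseteq S_{23}$ and $S_{13}S_{23}^*\subseteq S_{12}$. Hence $\R E_{11}+\R E_{22}+\R E_{33}+S_{12}[12]+S_{13}[13]+S_{23}[23]$ is a $15$-dimensional Jordan subalgebra (in fact a copy of $M_3(\H)_{sa}$) containing $1$ and all four atoms, so $\JB(q_1,\dots,q_4)\neq\A$.

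What is missing is a choice of atoms designed so that the frame falls out by successive Peirce compressions and real off-diagonal units appear directly as Peirce components; your ``alternatively'' argument then finishes. This is the paper's route. It takes $E_{11}$, $p:=\tfrac12(E_{11}+E_{22}+1[12])$, $vv^*$ with $v=\tfrac1{\sqrt3}(1,i,i)$, and $ww^*$ with $w=\tfrac1{\sqrt3}(1,j,l)$. Then $E_{22}=2U_{1-E_{11}}(p)$ and $E_{33}=3U_{1-E_{11}-E_{22}}(vv^*)$. The Peirce components supply $1[12]$ and $1[23]$ (hence all of $M_3(\R)_{sa}$) together with $i[12]$, $j[12]$ and $l[13]$, and $l[12]=2\,l[13]\circ 1[23]$. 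So the slot-$(1,2)$ entries contain $i,j,l$ and therefore exhaust $\O$.

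Incidentally, your other candidate $v_4=\tfrac1{\sqrt2}(1,1,0)$ is precisely the paper's $p$, and it does complete your $q_2,q_3$ to a generating set. Once $1\in S_{12}$, the closure rules force $S_{13}=S_{23}=:T$ with $S_{12}T\subseteq T$. From $j,k,j+il,(i+l)(j+il)\in T$ and $i,l\in S_{12}$ one obtains successively $jl,il,i-l,l,i,1,kl\in T$, so $T=\O=S_{12}$. But that computation has to be carried out, and it is not the mechanism you describe. (Atomicity of your $v_iv_i^*$, on the other hand, needs no separate check: each $v_i$ has a real coordinate, so its coordinates trivially associate.)
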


\begin{proof}
According to \cite[Prop. A.16]{vGKR}, the atoms in $M_3(\O)_{sa}$ are of the form
$$p := \begin{pmatrix}
\lVert x_1\rVert^2 & x_1x_2^* & x_1x_3^*\\
x_2x_1^* & \lVert x_2\rVert^2 & x_2x_3^*\\
x_3x_1^* & x_3x_2^* & \lVert x_3\rVert^2
\end{pmatrix}
$$
where $x_1,x_2,x_3\in \O$ associate, that is, $(x_1x_2)x_3 = x_1(x_2x_3)$ and $\lVert x_1\rVert^2 + \lVert x_2\rVert^2 + \lVert x_3\rVert^2 = 1$.
Set
$$q_1 := \begin{pmatrix}
1 & 0 & 0\\
0 & 0 & 0\\
0 & 0 & 0
\end{pmatrix},\quad 
q_2 := \frac{1}{2} \begin{pmatrix}
1 & 1 & 0\\
1 & 1 & 0\\
0 & 0 & 0
\end{pmatrix}, \quad
q_3 := \frac{1}{3} \begin{pmatrix}
1  & -i & -i\\
i & 1 & 1\\
i & 1 & 1
\end{pmatrix}, \quad
q_4 := \frac{1}{3} \begin{pmatrix}
1 & -j & -l\\
j & 1 & -jl\\
l & jl & 1
\end{pmatrix}.
$$
We will prove that $\{q_1, q_2, q_3, q_4\}$ is a generating set for $M_3(\O)_{sa}$. Let $J := \JB(q_1, q_2, q_3, q_4)$.\\
\ind First we show that $\{E_{11}, E_{22}, E_{33}\} \subset J$. Indeed, $E_{11} = q_1 \in J$, hence $E_{22} = P_0(E_{11})(2q_2) \in J$, which gives $E_{33} = P_0(E_{11} + E_{22})(3q_3) \in J$.\\
\ind Therefore, $J$ contains each off-diagonal Peirce component of $q_h \in J$ with respect to every pair of projections among $E_{11}, E_{22}, E_{33}$, which yields
$$
\left \{\begin{pmatrix}
0 & 1 & 0\\
1 & 0 & 0\\
0 & 0 & 0
\end{pmatrix}, 
\begin{pmatrix}
0 & -i & 0\\
i & 0 & 0\\
0 & 0 & 0
\end{pmatrix},
\begin{pmatrix}
0 & -j & 0\\
j & 0 & 0\\
0 & 0 & 0
\end{pmatrix}, 
\begin{pmatrix}
0 & 0 & 0\\
0 & 0 & 1\\
0 & 1 & 0
\end{pmatrix},
\begin{pmatrix}
0 & 0 & -l\\
0 & 0 & 0\\
l & 0 & 0
\end{pmatrix} \right\}
\subset J.
$$
Then $J$ contains the product of the last two matrices, so
$$\begin{pmatrix}
0 & -l & 0\\
l & 0 & 0\\
0 & 0 & 0
\end{pmatrix} = 2 \begin{pmatrix}
0 & 0 & 0\\
0 & 0 & 1\\
0 & 1 & 0
\end{pmatrix} \circ \begin{pmatrix}
0 & 0 & -l\\
0 & 0 & 0\\
l & 0 & 0
\end{pmatrix} \in J.$$
We conclude that $J$ contains the following generating set of the Albert algebra:
$$
\left \{
E_{11}, E_{22}, E_{33}, 
\begin{pmatrix}
0 & 1 & 0\\
1 & 0 & 0\\
0 & 0 & 0
\end{pmatrix}, 
\begin{pmatrix}
0 & -i & 0\\
i & 0 & 0\\
0 & 0 & 0
\end{pmatrix},
\begin{pmatrix}
0 & -j & 0\\
j & 0 & 0\\
0 & 0 & 0
\end{pmatrix}, 
\begin{pmatrix}
0 & -l & 0\\
l & 0 & 0\\
0 & 0 & 0
\end{pmatrix} \right\} \subset J,
$$
whence $J = M_3(\O)_{sa}$.
\end{proof}

%XXX: Make table with least number of atoms :)
% \begin{remark}
% Curiously, for each $h\in \{1,2,3,4\}$ we have $$\dim \JB(q_1,\ldots,q_h) = 3^{h-1}.$$ In fact, $\JB(q_1) \cong \R$, $\JB(q_1, q_2) \cong M_2(\R)_{sa}$, $\JB(q_1, q_2, q_3) \cong M_3(\C)_{sa}$, $\JB(q_1, q_2, q_3, q_4) \cong M_3(\O)_{sa}$.
% \end{remark}

% XXX: For any $n\ge 4$ and $\K \in \{\R, \C, \H\}$ also $H_n(\K)$ can be generated by $n$ atom. To generate a spin factor of dimension $k+1$, one needs at least $k$ atoms. Thus we obtain a complete answer to the question for each Euclidean Jordan algebra what is the least number of atoms needed to generating it. 

\begin{theorem}\label{FJPn}
Let $n\ge 1$. The free unital JB-algebra generated by $n$-projections $\FJ(p_1, \ldots, p_n)$ is a JC-algebra if and only if $n\le 3$.
\end{theorem}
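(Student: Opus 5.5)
We prove the two directions separately. Throughout, we take for granted, as the paper does, that $\FJP(p_1,\ldots,p_n)$ exists. We also use that it is generated as a unital JB-algebra by $p_1,\ldots,p_n$. This generation property follows from uniqueness in the universal property: the JB-subalgebra generated by $1,p_1,\dots,p_n$ satisfies the same universal property, and the two homomorphisms out of $\FJP$ (the identity, and the composite through that subalgebra) agree on the generators, hence coincide.

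For $n\le 3$, observe that $\FJP(p_1,\ldots,p_n)$ is generated by $1$ and at most three projections. When $n=3$, we regard $p_3$ as the "third element" of \Cref{ShirCohn3}, which is then directly applicable. When $n\le 2$ we may pad the generating set, for instance with $0$ or $1$, and again apply \Cref{ShirCohn3}. Alternatively, for $n\le 2$ one can invoke the classical Shirshov--Cohn theorem for JB-algebras, or the explicit model in \Cref{FJp2}. In every case $\FJP(p_1,\ldots,p_n)$ is a JC-algebra.

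For $n\ge 4$, we take the four atoms $q_1,\dots,q_4$ of \Cref{A4}, which generate $\A$. If $n>4$ we set $q_i:=0$ for $i>4$; these are still projections. The universal property then yields a unital Jordan homomorphism $\phi\colon \FJP(p_1,\ldots,p_n)\to \A$ with $\phi(p_i)=q_i$. Its image is a Jordan subalgebra of $\A$ containing $1,q_1,\dots,q_4$. Since $\A$ is finite-dimensional, this image is closed, so it is a JB-subalgebra containing the generators and hence equals $\A$; in fact, because $\phi$ is continuous and $\FJP$ is generated by the $p_i$, $\phi$ maps a dense subalgebra onto the algebra generated by the $q_i$, which is all of $\A$. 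Thus $\phi$ is a surjective Jordan homomorphism onto the Albert algebra, and by \Cref{JCThm} ((1)$\Rightarrow$(5)) the algebra $\FJP(p_1,\ldots,p_n)$ is not a JC-algebra.

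Neither direction involves a genuinely difficult step. The substantive work has already been done in \Cref{Aapq}/\Cref{ShirCohn3} and \Cref{A4}, and what remains is bookkeeping.
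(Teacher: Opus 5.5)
Your proposal is correct and follows the paper's proof essentially verbatim. For $n\le 3$ you apply \Cref{ShirCohn3}, and for $n\ge 4$ you use the four generating atoms of \Cref{A4} (with $p_h\mapsto 0$ for $h>4$) to get a surjection onto $\A$, which rules out the JC property by \Cref{JCThm}. Your extra justifications fill in points the paper leaves implicit: that $\FJP(p_1,\ldots,p_n)$ is generated by $1,p_1,\ldots,p_n$ via the uniqueness clause, and that the image of $\phi$ is all of $\A$ because subalgebras of a finite-dimensional algebra are closed.
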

\begin{proof}
If $n\le 3$, then $\FJ(p_1, \ldots, p_n)$ is generated as a unital JB-algebra by at most three projections $p_1, \ldots, p_n$ and $1$, hence is a JC-algebra by \Cref{ShirCohn3}.\\
\ind Now let $n\ge 4$. By \Cref{A4}, we can choose four atomic projections $q_1,\ldots, q_4$ which generate $\A$. By the universal property of $\FJ(p_1, \ldots, p_n)$ there exists a unique unital Jordan homomorphism $\phi \colon \FJ(p_1, \ldots, p_n) \to \A$ such that $\phi(p_h) = q_h$ for $1\le h \le 4$ and $\phi(p_h) = 0$ for $h > 4$. Then $\phi$ is a surjective homomorphism onto the Albert algebra, hence $\FJ(p_1, \ldots, p_n)$ is non-special by \Cref{JCThm}.
\end{proof}

\begin{remark}\label{FJP3}
Write $C^*(p_1,p_2,p_3)$ for the free unital C*-algebra generated by three projections. As in the proof of \Cref{FJp2}, \Cref{FJPn} may be used to show that the free unital JB-algebra $\FJP(p_1,p_2,p_3)$ generated by three projections can be constructed as the unital JB-subalgebra of $C^*(p_1,p_2,p_3)_{sa}$ generated by $p_1$, $p_2$ and $p_3$.
\end{remark}

\bibliography{references}

@article{Shi56,
 author = {Shirshov, A. I.},
 title = {On special {{\(J\)}}-rings},
 fjournal = {Matematicheski{\u{\i}} Sbornik. Novaya Seriya},
 journal = {Mat. Sb., Nov. Ser.},
 volume = {38},
 pages = {149--166},
 year = {1956},
 language = {Russian},
 zbMATH = {3116667},
 Zbl = {0070.02902}
}

@article{JacMcD,
 author = {Jacobson, Nathan},
 title = {{MacDonald}'s theorem on {Jordan} algebras},
 fjournal = {Archiv der Mathematik},
 journal = {Arch. Math.},
 issn = {0003-889X},
 volume = {13},
 pages = {241--250},
 year = {1962},
 language = {English},
 doi = {10.1007/BF01650071},
 zbMATH = {3323911},
 Zbl = {0204.04002}
}

@article{McD,
 author = {Macdonald, I. G.},
 title = {Jordan algebras with three generators},
 fjournal = {Proceedings of the London Mathematical Society. Third Series},
 journal = {Proc. Lond. Math. Soc. (3)},
 issn = {0024-6115},
 volume = {10},
 pages = {395--408},
 year = {1960},
 language = {English},
 doi = {10.1112/plms/s3-10.1.395},
 zbMATH = {3154015},
 Zbl = {0094.25003}
}

@article{vdWopCm,
title = {Commutativity in {J}ordan operator algebras},
author = {van de Wetering, J.},
fjournal = {Journal of Pure and Applied Algebra},
 journal = {J. Pure Appl. Algebra},
volume = {224},
number = {11},
pages = {106407},
year = {2020},
}

@article{ASS78,
author = {Alfsen, E. M. and Shultz, F. W. and St{\o}rmer, E.},
year = {1978},
pages = {11–56},
title = {A {G}elfand–{N}aimark theorem for {J}ordan algebras},
volume = {28},
fjournal = {Advances in Mathematics},
journal = {Adv. Math.},
doi = {10.1016/0001-8708(78)90044-0}
}

@book{SpVe,
 author = {Springer, T. A. and Veldkamp, F. D.},
 title = {Octonions, {Jordan} algebras and exceptional groups},
 note = {Revised {English} version of the original {German} notes},
 year = {2000},
 publisher = {Springer, Berlin},
}

@book{McC04,
 author = {McCrimmon, K.},
 title = {A taste of {Jordan} algebras},
 issn = {0172-5939},
 isbn = {0-387-95447-3},
 year = {2004},
 publisher =  {Springer, New York, NY},
 
 doi = {10.1007/b97489},
 zbMATH = {2043992},
 Zbl = {1044.17001}
}

@article{HaaHO,
 ISSN = {03794024, 18417744},
 URL = {http://www.jstor.org/stable/24714221},
 author = {Uffe Haagerup and Harald Hanche-Olsen},
 journal = {J. Operator Theory},
 number = {2},
 pages = {343--364},
 publisher = {Theta Foundation},
 title = {Tomita--{T}akesaki theory for {J}ordan algebras},
 urldate = {2026-04-07},
 volume = {11},
 year = {1984}
}

@Book{AS03,
 Author = {Alfsen, E. M. and Shultz, F. W.},
 Title = {Geometry of state spaces of operator algebras},
 ISBN = {0-8176-4319-2},
 Year = {2003},
 Publisher = {Birkh{\"a}user, Boston, MA},
 zbMATH = {1885140},
 Zbl = {1042.46001}
}

@article{RaeSin,
author = {Sinclair, Allan M. and Raeburn, Iain},
journal = {Math. Scand.},
number = {2},
pages = {278-290},
title = {The {C}*-algebra generated by two projections.},
volume = {65},
year = {1989},
}

@book{HOSt84,
 author = {Hanche-Olsen, H. and St{\o}rmer, E.},
 title = {Jordan operator algebras},
 fseries = {Monographs and Studies in Mathematics},
 series = {Monogr. Stud. Math.},
 volume = {21},
 year = {1984},
 publisher = {Pitman, Boston, MA},
 zbMATH = {3893796},
 Zbl = {0561.46031}
}

@article{Behncke,
 author = {Behncke, Horst},
 title = {Hermitian {Jordan} {Banach} algebras},
 fjournal = {Journal of the London Mathematical Society. Second Series},
 journal = {J. Lond. Math. Soc., II. Ser.},
 issn = {0024-6107},
 volume = {20},
 pages = {327--333},
 year = {1979},
 language = {English},
 doi = {10.1112/jlms/s2-20.2.327},
 zbMATH = {3645779},
 Zbl = {0415.46039}
}

@Book{Jac69,
 Author = {Jacobson, N.},
 Title = {Lectures on quadratic {Jordan} algebras},
 Year = {1969},
 Publisher = {Tata Institute of Fundamental Research, Bombay},
 Volume = {45},
 Series = {Lect. Math. Phys.},
 zbMATH = {3399446},
 Zbl = {0253.17013}
}

@Article{JNW,
 Author = {Jordan, P. and von Neumann, J. and Wigner, E. P.},
 Title = {On an algebraic generalization of the quantum mechanical formalism},
 Journal = {Ann. of Math. (2)},
 ISSN = {0003-486X},
 Volume = {35},
 Pages = {29--64},
 Year = {1934},
 
 DOI = {10.2307/1968117},
 zbMATH = {3012646},
 Zbl = {0008.42103}
}

@book{FaKor,
 author = {Faraut, J. and Kor{\'a}nyi, A.},
 title = {Analysis on symmetric cones},
 isbn = {0-19-853477-9},
 year = {1994},
 publisher = {Clarendon Press, Oxford},
 zbMATH = {715155},
 Zbl = {0841.43002}
}

@article {FaFe77,
    AUTHOR = {Faulkner, J. R. and Ferrar, J. C.},
     TITLE = {Exceptional {L}ie algebras and related algebraic and geometric
              structures},
   JOURNAL = {Bull. Lond. Math. Soc.},
  FJOURNAL = {The Bulletin of the London Mathematical Society},
    VOLUME = {9},
      YEAR = {1977},
    NUMBER = {1},
     PAGES = {1--35},
      ISSN = {0024-6093,1469-2120},
   MRCLASS = {17B60},
  MRNUMBER = {444729},
MRREVIEWER = {Bernard\ Kolman},
       DOI = {10.1112/blms/9.1.1},
       URL = {https://doi.org/10.1112/blms/9.1.1},
}

@book {Jac71,
    AUTHOR = {Jacobson, N.},
     TITLE = {Exceptional {L}ie algebras},
    SERIES = {Lect. Notes Pure Appl. Math.},
    VOLUME = {1},
 PUBLISHER = {Marcel Dekker, New York, NY},
      YEAR = {1971},
   MRCLASS = {17.30},
  MRNUMBER = {284482},
MRREVIEWER = {N.\ R.\ Wallach},
}

@article{Kaup02,
 author = {Kaup, W.},
 title = {Bounded symmetric domains and derived geometric structures},
 fjournal = {Atti della Accademia Nazionale dei Lincei. Classe di Scienze Fisiche, Matematiche e Naturali. Serie IX. Rendiconti Lincei. Matematica e Applicazioni},
 journal = {Atti Accad. Naz. Lincei Rend. Lincei Mat. Appl.},
 issn = {1120-6330},
 volume = {13},
 number = {3-4},
 pages = {243--257},
 year = {2002},
 zbMATH = {2217219},
 Zbl = {1098.32008}
}

@book {Sa80,
    AUTHOR = {Satake, I.},
     TITLE = {Algebraic structures of symmetric domains},
    SERIES = {Kan\^o{} Memorial Lectures},
    VOLUME = {4},
 PUBLISHER = {Iwanami Shoten, Tokyo; Princeton University Press, Princeton,
              NJ},
      YEAR = {1980},
     PAGES = {xvi+321},
   MRCLASS = {32-02 (17C35 32Mxx 53C35)},
  MRNUMBER = {591460},
MRREVIEWER = {S.\ Murakami},
}

@book {Up85,
    AUTHOR = {Upmeier, H.},
     TITLE = {Symmetric {B}anach manifolds and {J}ordan {$C\sp
              \ast$}-algebras},
    SERIES = {North-Holland Mathematics Studies},
    VOLUME = {104},
 PUBLISHER = {North-Holland Publishing Co., Amsterdam},
      YEAR = {1985},
      ISBN = {0-444-87651-0},
   MRCLASS = {58B12 (17C35 17C65 32M15 46L70)},
}

@book {Up87,
    AUTHOR = {Upmeier, H.},
     TITLE = {Jordan algebras in analysis, operator theory, and quantum
              mechanics},
    SERIES = {CBMS Regional Conference Series in Mathematics},
    VOLUME = {67},
 PUBLISHER = {American Mathematical Society, Providence, RI},
      YEAR = {1987},
     PAGES = {viii+85},
      ISBN = {0-8218-0717-X},
   MRCLASS = {17C35 (17C65 32M15 46H70 46L70 47D25)},
  MRNUMBER = {874756},
MRREVIEWER = {A.\ Kor\'anyi},
       DOI = {10.1090/cbms/067},
       URL = {https://doi.org/10.1090/cbms/067},
}

@article{ZelPII,
 author = {Zel'manov, E. I.},
 title = {Prime {Jordan} algebras {II}},
 fjournal = {Siberian Mathematical Journal},
 journal = {Sib. Math. J.},
 issn = {0037-4466},
 volume = {24},
 pages = {73--85},
 year = {1983},
 
 doi = {10.1007/BF00968798},
 zbMATH = {3847628},
 Zbl = {0534.17009}
}

@article{AHOS,
 author = {Alfsen, E.M. and Hanche-Olsen, H. and Shultz, F.W.},
 title = {State spaces of {C}*-algebras},
 fjournal = {Acta Mathematica},
 journal = {Acta Math.},
 issn = {0001-5962},
 volume = {144},
 pages = {267--305},
 year = {1980},
 language = {English},
 doi = {10.1007/BF02392126},
 zbMATH = {3717977},
 Zbl = {0458.46047}
}

@article{EffSto79,
 author = {Effros, E.G. and St{\o}rmer, E.},
 title = {Positive projections and {Jordan} structure in operator algebras},
 fjournal = {Mathematica Scandinavica},
 journal = {Math. Scand.},
 issn = {0025-5521},
 volume = {45},
 pages = {127--138},
 year = {1979},
 language = {English},
 doi = {10.7146/math.scand.a-11830},
 url = {https://eudml.org/doc/166668},
 zbMATH = {3712627},
 Zbl = {0455.46059}
}

@article{Sverchkov,
	author = {Sverchkov, S.  R. },
	journal = {Algebra and Logic},
	number = {1},
	pages = {62--88},
	title = {Jordan s-identities in three variables},
	volume = {50},
	year = {2011},
}

@article{Thedy87,
 author = {Thedy, Armin},
 title = {A natural s-identity of {Jordan} algebras},
 fjournal = {Communications in Algebra},
 journal = {Commun. Algebra},
 issn = {0092-7872},
 volume = {15},
 pages = {2081--2098},
 year = {1987},
 language = {English},
 doi = {10.1080/00927878708823523},
 zbMATH = {4019290},
 Zbl = {0627.17010}
}

@article{LewMcC,
 author = {Lewand, Robert E. and McCrimmon, Kevin},
 title = {Macdonald's theorem for quadratic {Jordan} algebras},
 fjournal = {Pacific Journal of Mathematics},
 journal = {Pacific J. Math.},
 issn = {1945-5844},
 volume = {35},
 pages = {681--706},
 year = {1971},
 language = {English},
 doi = {10.2140/pjm.1970.35.681},
 zbMATH = {3345539},
 Zbl = {0217.34504}
}

@book{Topping65,
 author = {Topping, D. M.},
 title = {Jordan algebras of self-adjoint operators},
 fseries = {Memoirs of the American Mathematical Society},
 series = {Mem. Am. Math. Soc.},
 issn = {0065-9266},
 volume = {53},
 isbn = {978-0-8218-1253-2; 978-0-8218-9998-4},
 year = {1965},
 publisher = {American Mathematical Society, Providence, RI},
 language = {English},
 doi = {10.1090/memo/0053},
 zbMATH = {3222371},
 Zbl = {0137.10203}
}

@book{Zhev,
 author = {Zhevlakov, K. A. and Slin'ko, A. M. and Shestakov, I. P. and Shirshov, A. I.},
 title = {Rings that are nearly associative. {Transl}. from the {Russian} by {Harry} {F}. {Smith}},
 fseries = {Pure and Applied Mathematics (Academic Press)},
 series = {Pure Appl. Math., Acad. Press},
 issn = {0079-8169},
 volume = {104},
 year = {1982},
 publisher = {Academic Press, New York, NY},
 language = {English},
 zbMATH = {3765986},
 Zbl = {0487.17001}
}

@article{AlbPaige,
 ISSN = {00029947},
 URL = {http://www.jstor.org/stable/1993420},
 author = {A. A. Albert and L. J. Paige},
 journal = {Trans. Amer. Math. Soc.},
 number = {1},
 pages = {20--29},
 publisher = {American Mathematical Society},
 title = {On a homomorphism Property of certain {J}ordan algebras},
 urldate = {2026-04-24},
 volume = {93},
 year = {1959}
}

@article{Albexc,
 ISSN = {0003486X, 19398980},
 URL = {http://www.jstor.org/stable/1968118},
 author = {A. Adrian Albert},
 journal = {Ann. of Math.},
 number = {1},
 pages = {65--73},
 publisher = {[Annals of Mathematics, Trustees of Princeton University on Behalf of the Annals of Mathematics, Mathematics Department, Princeton University]},
 title = {On a Certain Algebra of Quantum Mechanics},
 urldate = {2026-03-19},
 volume = {35},
 year = {1934}
}

@book{JacStrJA,
 author = {Jacobson, N.},
 title = {Structure theory of {Jordan} algebras},
 fseries = {The University of Arkansas Lecture Notes in Mathematics},
 series = {Univ. Arkansas Lect. Notes Math.},
 volume = {5},
 year = {1981},
 publisher = {The University of Arkansas, Fayetteville, AR},
 language = {English},
 zbMATH = {3773843},
 Zbl = {0492.17009}
}

@book{JacStruRep,
 author = {Jacobson, N.},
 title = {Structure and representations of {Jordan} algebras},
 fseries = {Colloquium Publications. American Mathematical Society},
 series = {Amer. Math. Soc. Colloq. Publ.},
 issn = {0065-9258},
 volume = {39},
 isbn = {0-8218-1039-1; 0-8218-3179-8},
 year = {1968},
 publisher = {American Mathematical Society, Providence, RI},
 language = {English},
 url = {www.ams.org/online_bks/coll39/},
 zbMATH = {3346614},
 Zbl = {0218.17010}
}

@article{vGKR,
 author = {van {G}aans, Onno and Kalauch, Anke and Roelands, Mark},
 title = {Order theoretical structures in atomic {JBW}-algebras: disjointness, bands, and centres},
 journal = {Positivity},
 issn = {1385-1292},
 volume = {28},
 number = {1},
 pages = {54},
 year = {2024},
}

@article{ZelP1,
 author = {Zel'manov, E. I.},
 title = {Prime {Jordan} algebras},
 fjournal = {Algebra and Logic},
 journal = {Algebra Logic},
 issn = {0002-5232},
 volume = {18},
 pages = {103--111},
 year = {1979},
 language = {English},
}

@article{HaOltp,
title={On the Structure and Tensor Products of {J}{C}-Algebras},
volume={35},
DOI={10.4153/CJM-1983-059-8},
number={6},
journal={Canad. J. Math.},
author={Hanche-Olsen, H.},
year={1983},
pages={1059–1074}
}

@article{Cohn_1954,
title={On Homomorphic Images of Special {J}ordan Algebras},
volume={6},
fjournal={Canadian Journal of Mathematics},
journal={Canad. J. Math.},
author={Cohn, P. M.},
year={1954},
pages={253–264}}

@article{Glennie,
author = {C. M. Glennie},
title = {{Some identities valid in special Jordan algebras but not valid in all Jordan algebras.}},
volume = {16},
journal = {Pacific J. Math.},
number = {1},
publisher = {Pacific Journal of Mathematics, A Non-profit Corporation},
pages = {47 -- 59},
year = {1966},
}

\end{document}